\newtheorem{defi}{Definition}
\newtheorem{prop}[defi]{Proposition}
\newtheorem{lemm}[defi]{Lemma}
\newtheorem{theor}[defi]{Theorem}
\newtheorem{cor}[defi]{Corollary}
\newtheorem{rem}[defi]{Remark}
\def\squareforqed{\hbox{\rlap{$\sqcap$}$\sqcup$}}
\def\qed{\ifmmode\squareforqed\else{\unskip\nobreak\hfil
\penalty50\hskip1em\null\nobreak\hfil\squareforqed
\parfillskip=0pt\finalhyphendemerits=0\endgraf}\fi}
\def\endenv{\ifmmode\;\else{\unskip\nobreak\hfil
\penalty50\hskip1em\null\nobreak\hfil\;
\parfillskip=0pt\finalhyphendemerits=0\endgraf}\fi}
\mathchardef\ordinarycolon\mathcode`\:
\def\vcentcolon{\mathrel{\mathop\ordinarycolon}}
\newcommand{\nc}{\newcommand}
\nc{\rnc}{\renewcommand}
\nc{\beq}{\begin{equation}}
\nc{\eeq}{{\end{equation}}}
\nc{\bea}{\begin{eqnarray}}
\nc{\eea}{\end{eqnarray}}
\nc{\beqa}{\begin{eqnarray}}
\nc{\eeqa}{\end{eqnarray}}
\nc{\lbar}[1]{\overline{#1}}
\nc{\bra}[1]{\langle#1|}
\nc{\ket}[1]{|#1\rangle}
\nc{\ketbra}[2]{|#1\rangle\!\langle#2|}
\nc{\braket}[2]{\langle#1|#2\rangle}
\nc{\proj}[1]{|#1\rangle\!\langle #1|}
\nc{\avg}[1]{\langle#1\rangle}
\rnc{\max}{\operatorname{max}}
\nc{\rank}{\operatorname{rank}}
\nc{\conv}{\operatorname{conv}}
\nc{\smfrac}[2]{\mbox{$\frac{#1}{#2}$}}
\nc{\Tr}{\operatorname{Tr}}
\nc{\ox}{\otimes}
\nc{\dg}{\dagger}
\nc{\dn}{\downarrow}
\nc{\cA}{{\cal A}} \nc{\cB}{{\cal B}}
\nc{\cC}{{\cal C}} \rnc{\cD}{{\cal D}} \nc{\cE}{{\cal E}}
\nc{\cF}{{\cal F}} \nc{\cG}{{\cal G}} \rnc{\cH}{{\cal H}}
\nc{\cI}{{\cal I}} \nc{\cJ}{{\cal J}} \nc{\cK}{{\cal K}}
\rnc{\cL}{{\cal L}} \nc{\cM}{{\cal M}} \nc{\cN}{{\cal N}}
\nc{\cO}{{\cal O}} \nc{\cP}{{\cal P}} \rnc{\cR}{{\cal R}}
\nc{\cS}{{\cal S}} \nc{\cT}{{\cal T}} \nc{\cU}{{\cal U}}
\nc{\cX}{{\cal X}} \nc{\cW}{{\cal W}} \nc{\cZ}{{\cal Z}}
\nc{\csupp}{{\operatorname{csupp}}}
\nc{\qsupp}{{\operatorname{qsupp}}} \nc{\var}{\operatorname{var}}
\nc{\rar}{\rightarrow} \nc{\lrar}{\longrightarrow}
\nc{\poly}{\operatorname{poly}}
\nc{\polylog}{\operatorname{polylog}}
\nc{\Lip}{\operatorname{Lip}}
\def\>{\rangle}
\def\<{\langle}
\def\a{\alpha}
\def\b{\beta}
\def\g{\gamma}
\def\t{\theta}
\def\s{\sigma}
\nc{\glneq}{{\raisebox{0.6ex}{$>$}  \hspace*{-1.8ex} \raisebox{-0.6ex}{$<$}}}
\nc{\gleq}{{\raisebox{0.6ex}{$\geq$}\hspace*{-1.8ex} \raisebox{-0.6ex}{$\leq$}}}
\nc{\RR}{{{\mathbb R}}}
\nc{\CC}{{{\mathbb C}}}
\nc{\FF}{{{\mathbb F}}}
\nc{\HH}{{{\mathbb H}}}
\nc{\NN}{{{\mathbb N}}}
\nc{\ZZ}{{{\mathbb Z}}}
\nc{\bZ}{{{\mathbb Z}}}
\nc{\PP}{{{\mathbb P}}}
\nc{\QQ}{{{\mathbb Q}}}
\nc{\bQ}{{{\mathbb Q}}}
\nc{\UU}{{{\mathbb U}}}
\nc{\bU}{{{\mathbb U}}}
\nc{\WW}{{{\mathbb W}}}
\nc{\EE}{{{\mathbb E}}}
\rnc{\SS}{{{\mathbb S}}}
\nc{\id}{{\operatorname{id}}}
\nc{\vholder}[1]{\rule{0pt}{#1}}
\nc{\ob}[1]{#1}
\def\beq{\begin{equation}}
\def\eeq{\end{equation}}
\nc{\eq}[1]{Eq.~(\ref{eq:#1})} \nc{\eqs}[2]{Eqs.~(\ref{eq:#1}) and
(\ref{eq:#2})}
\nc{\eqn}[1]{Eq.~(\ref{eqn:#1})}
\nc{\eqns}[2]{Eqs.~(\ref{eqn:#1}) and (\ref{eqn:#2})}
\nc{\region}{\cS\cW}
\renewcommand*{\@fnsymbol}[1]{\ensuremath{\ifcase#1\or
\ast\or \P\or \natural\or \flat\or \sharp\or \bullet\or \dagger\or \ddagger\or
\|\or **\or \dagger\dagger\or \ddagger\ddagger \else\@ctrerr\fi}}
\begin{document}
\titlerunning{Geometry of $SO(3)_p$}
\authorrunning{S. Di Martino, S. Mancini, M. Pigliapochi, and I. Svampa {\&} A. Winter}

\title{Geometry of the $p$-adic special orthogonal group $SO(3)_p$
\footnote{The present work grew out of a BSc thesis
\cite{Ilaria:tesina} and two MSc theses
\cite{Michele:tesi,Ilaria:tesi} at the University of Camerino,
co-supervised at the Autonomous  University of
Barcelona.\vspace{2mm}}}

\author{Sara Di Martino}
 \email[E-mail: ]{saradimarti@gmail.com}
 \affiliation{F\'{\i}sica Te\`{o}rica: Grup d'Informaci\'{o} Qu\`{a}ntica,
              Departament de F\'isica, 
              Universitat Aut\`onoma de Barcelona, ES-08193 Bellaterra (Barcelona), Spain}

\author{Stefano Mancini}
 \email[E-mail: ]{stefano.mancini@unicam.it}
 \affiliation{School of Science and Technology, University of Camerino, 
              Via Madonna delle Carceri 9, I-62032 Camerino, Italy}
 \affiliation{INFN --- Sezione Perugia, Via A. Pascoli, I-06123 Perugia, Italy}

\author{Michele~Pigliapochi}
 \email[E-mail: ]{michele.pigliapochi@gmail.com}
 \affiliation{School of Science and Technology, University of Camerino, 
              Via Madonna delle Carceri 9, I-62032 Camerino, Italy}

\author{Ilaria Svampa}
 \email[E-mail: ]{ilaria.svampa@unicam.it}
 \affiliation{School of Science and Technology, University of Camerino, 
              Via Madonna delle Carceri 9, I-62032 Camerino, Italy}
 \affiliation{F\'{\i}sica Te\`{o}rica: Grup d'Informaci\'{o} Qu\`{a}ntica,
              Departament de F\'isica, 
              Universitat Aut\`onoma de Barcelona, ES-08193 Bellaterra (Barcelona), Spain}

\author{Andreas Winter}
\email[E-mail: ]{andreas.winter@uab.cat}
 \affiliation{F\'{\i}sica Te\`{o}rica: Grup d'Informaci\'{o} Qu\`{a}ntica,
              Departament de F\'isica, 
              Universitat Aut\`onoma de Barcelona, ES-08193 Bellaterra (Barcelona), Spain}
 \affiliation{ICREA --- Instituci\'o Catalana de la Recerca i Estudis Avan\c{c}ats, 
              Pg. Llu\'is Companys, 23, ES-08010 Barcelona, Spain}




\begin{abstract}
  We derive explicitly the structural properties of the $p$-adic special
  orthogonal groups in dimension three, for all primes $p$,
  and, along the way, the two-dimensional case. In particular,
  starting from the unique definite quadratic form in three dimensions
  (up to linear equivalence and rescaling), we show that every element
  of $SO(3)_p$ is a rotation around an axis. An important part of the analysis
  is the classification of all definite forms in two dimensions, yielding
  a description of the rotation subgroups around any fixed axis, which
  all turn out to be abelian and parametrised naturally by the projective line.

  Furthermore, we find that for odd primes $p$, the entire group $SO(3)_p$
  admits a representation in terms of Cardano (aka nautical) angles of
  rotations around the reference axes, in close analogy to the real orthogonal case.
  However, this works only for certain orderings of the product of rotations around
  the coordinate axes, depending on the prime; furthermore, there is no
  general Euler angle decomposition. For $p=2$, no Euler or Cardano decomposition
  exists.
\end{abstract}

\subclass{20G25, 20F65}

\keywords{$p$-adic numbers, special orthogonal group, nautical angles}

\maketitle

\section{Introduction}
\label{sec:intro} Large parts of classical geometry can be
interpreted, following Klein's programme \cite{Klein}, as statements
about the orthogonal and special orthogonal groups in dimensions
$2$, $3$, and generally $n$. It is thus no surprise that the special
orthogonal groups $SO(n)_{\RR}$ are among the most studied and most
well-known groups in mathematics. The cases $n=2$ and $n=3$ stand
out particularly, both for their structural properties and the
possibility to visualise the action of the group on Euclidean space.
In particular, $SO(2)_{\RR}$ is the group of planar rotations,
i.e.,~isomorphic to the commutative group of adding angles mod
$2\pi$; and $SO(3)_{\RR}$ consists entirely of rotations around
different axis, admitting essentially unique decompositions into
Euler and Cardano angles.

Given the importance of $p$-adic numbers in number theory, it is
natural that orthogonal groups should have been studied also over
the fields $\QQ_p$, which a priori are a multitude of symmetry
groups, one for each nontrivial quadratic form. Just as in the real
case, a fundamental property of the quadratic form is whether it is
definite (i.e.,~only has the trivial zero) or indefinite
(i.e.,~represents zero in nontrivial ways), which distinguishes
compact symmetry groups in the latter and non-compact groups in the
former case. Unlike the real case, definite quadratic forms over
$\QQ_p$ exist only in dimensions two, three and four. As we shall
see here, in dimensions two and three the symmetry groups, denoted
by $SO(2)_p$ and $SO(3)_p$, are largely governed by structures
familiar from Euclidean geometry, reinterpreted $p$-adically.

The structure of the paper is as follows. In Section
\ref{sec:quadratic}, we review and re-derive the unique quadratic
forms on dimensions three and four that do no represent $0$
nontrivially (``definite'' forms), up to linear and rescaling
equivalence, for all primes $p$, introducing some useful notation
for the rest of the paper. This material, albeit well-known, is
included for the sake of a self-contained exposition. This
classification allows us to define the $p$-adic special orthogonal
groups $SO(3)_p$ and $SO(4)_p$ in a unique way, though we will not
consider the latter afterwards in the present work.

Then, in Section \ref{sec:basic}, we begin our new contributions, by
first making several basic observations about $SO(3)_p$, most importantly
that it is compact and profinite, and that every of its elements is a
rotation around some axis in $p$-adic three-space $\QQ_p^3$.
This then motivates the investigation of the rotations around a fixed given axis,
which are special orthogonal transformations of the plane orthogonal to that axis,
in Sections \ref{sec:planar} and \ref{gendir}.
They are naturally the orthogonal symmetry groups of the
definite form restricted to a plane, and there
are three ($p$ odd), resp.~seven ($p=2$) equivalence classes of those.
We denote them $SO(2)_p^\kappa$, and they all turn out to be abelian groups.
Furthermore, we derive a parametrisation of each of these rotation groups
by the $p$-adic projective line $P^1(\QQ_p)$, allowing us to identify the
groups $SO(2)_p^\kappa$ with certain abelian subgroups of $PGL(2,\QQ_p)$,
illuminating in particular the composition law.
Finally, in Section \ref{sec:CardEul2}, we show that every element of $SO(3)_p$
has a decomposition into Cardano principal angles, for odd primes $p$:
there exist orderings of the coordinate axes such that every special orthogonal
group element is a product of rotations around the axes in that order
(in exactly two different ways). For $p=2$, however, no fixed ordering of
the product can recover the entire group $SO(3)_2$, and we exhibit an example
of a special orthogonal matrix that cannot be written in any of the twelve
possible Euler or Cardano ways.

\section{Quadratic forms over $p$-adic numbers}
\label{sec:quadratic}
In this section we review, and in some cases, re-derive the elementary properties
we shall need about the quadratic forms over $\QQ_p$, according to dimension
($n=3$ and $n=4$) as well as type of prime ($p\equiv 1\mod 4$, $p\equiv 3 \mod 4$
and $p=2$). Comprehensive treatments of this material can be found in
the books of Cassels \cite{Cassels} and Serre \cite{Serre}.

A quadratic form for our purposes is a homogeneous function on the
$n$-dimensional $\QQ_p$-vector space $V$ that can be written as
\begin{equation}\notag
  Q(\boldsymbol{x})=\sum_{ij} a_{ij}x_ix_j = x^\top A x,
\end{equation}
where $\boldsymbol{x}=\sum_i x_i \boldsymbol{e}_i\in V$ is a vector,
$(\boldsymbol{e}_1,\ldots,\boldsymbol{e}_n)$ is a basis of $V$, and
$A$ is an $n\times n$-matrix. Throughout we will assume that $A$ is
nondegenerate, i.e., has rank $n$. Equivalently, we could speak
about symmetric bilinear forms $b(\boldsymbol{x},\boldsymbol{y})$ on
$V$ such that $Q(\boldsymbol{x})=b(\boldsymbol{x},\boldsymbol{x})$,
since we recover the bilinear form via
$b(\boldsymbol{x},\boldsymbol{y})=
\frac12\bigl(Q(\boldsymbol{x}+\boldsymbol{y})-Q(\boldsymbol{x})-Q(\boldsymbol{y})\bigr)$.
In either case, the orthogonal group is defined as the set of linear
maps on $V$ that are symmetries of the quadratic/bilinear form
\[\begin{split}
  O(Q) &= \{ L\in \text{End}(V) : Q(L\boldsymbol{x}) = Q(\boldsymbol{x}) \ \forall \boldsymbol{x}\in V \} \\
       &= \{ L\in \text{End}(V) : b(L\boldsymbol{x},L\boldsymbol{y}) = b(\boldsymbol{x},\boldsymbol{y})
                                                          \ \forall \boldsymbol{x},\boldsymbol{y}\in V \} \\
       &\simeq \{ L\in M_{n\times n}(\QQ_p) : L^\top A L = A \},
\end{split}\]
the latter under the identification of $V$ with $\QQ_p^n$ via the basis
$(\boldsymbol{e}_i)$,
$\boldsymbol{x}=\sum_i x_i \boldsymbol{e}_i \leftrightarrow (x_1,\ldots,x_n)$, which
turns the linear maps on $V$ into $n\times n$-matrices. The subset of
$O(Q)$ consisting of matrices $L$ with unit determinant, $\det L = 1$,
is the special orthogonal group, denoted $SO(Q)$.

We are interested in the abstract group structure of $O(Q)$ and $SO(Q)$,
which do not change when going to an equivalent form.
First, $Q'$ is \emph{similar} to $Q$, $Q\sim Q'$,
if there exists an invertible linear map $S$ such that
$Q'(\boldsymbol{x})=Q(S\boldsymbol{x})$ for all $\boldsymbol{x}\in V$,
meaning for the matrix representation $A'$
of $Q'$ that $A' = S^\top A S$. In that case, $O(Q') \simeq O(Q)$ and
$SO(Q') \simeq SO(Q)$, the isomorphism being $O(Q) \ni L \mapsto S^{-1}LS \in O(Q')$.
Furthermore, $Q'$ is a \emph{scaling} of $Q$ if $Q'=tQ$ with $t\in \QQ_p^*$;
in this case, clearly $O(Q') = O(Q)$ and $SO(Q') = SO(Q)$.
Hence, our first task is to classify the quadratic forms up to
similarity and scaling, which we sum up into \emph{equivalence}.
Indeed, up to similarity, we can write every quadratic form with a
diagonal matrix $A$, i.e.,
\begin{equation}\notag
  Q(\boldsymbol{x})=\sum_j a_j x_j^2,
\end{equation}
with $\boldsymbol{x}=(x_1,x_2,\dots,x_n)\in\QQ_p^n$, and $a_j \in \QQ_p^*$.
Multiplying $x_j$ by $\lambda_j^{-1}\neq 0$ will change $a_j$ to $\lambda_j^2 a_j$,
hence to classify the quadratic forms up to
coordinate changes $GL_n(\QQ_p)$, we only need to consider $a_j\in\QQ_p^*/(\QQ_p^*)^2$.
We thus have to understand the structure of the group $K=\QQ_p^*/(\QQ_p^*)^2$.
The following descriptions of $K$ are well-known~\cite{Cassels}:

\textbf{If $\mathbf{p\neq 2}$}, then $K=\langle u, p\rangle = \{1,u,p,up\}$,
with $u$ a unit in $\ZZ_p$ that is not a square. Clearly $K \simeq (\ZZ/2\ZZ)^2$
is the Klein group.
For $p\equiv 3\mod 4$, we may choose $u=-1$.

\textbf{If $\mathbf{p=2}$}, then $K=\langle -1,2,5\rangle =\{1,-1,2,-2,5,-5,10,-10\}$.
In this case $K \simeq (\ZZ/2\ZZ)^3$.

Given the structure of $K$, it remains to find the invariants of quadratic forms. It can be proven~\cite{Cassels} that apart from the rank of the form, there are only two more
invariants: the discriminant $d(Q)=\Pi_j a_j=\det A$, as well as $\varepsilon(Q)=\Pi_{j<k}(a_j,a_k)$,
where $(a,b)$ is the Hilbert symbol defined as
\begin{equation}\notag
  (a,b):= \begin{cases}
            \phantom{-}1 & \text{ iff } z^2-ax^2-by^2=0 \text{ admits nontrivial solutions,}\\
                      -1 & \text{ otherwise.}
          \end{cases}
\end{equation}

Now we have all the information we need in order to classify
quadratic forms on $\QQ_p^n$ for all $n$.

\begin{theor}
\label{teor:equivpQ}
Two quadratic forms over $\QQ_p$ are similar if and only if they have same
rank $n$, same determinant $d$ and same Hasse invariant $\varepsilon$.
{\hfill\qed}
\end{theor}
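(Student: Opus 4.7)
The plan is to prove the biconditional by tackling the two directions separately, with the reverse (completeness) direction proceeding by induction on the rank $n$. For the \emph{forward} direction I would assume $Q'(\boldsymbol{x}) = Q(S\boldsymbol{x})$ with $S \in GL_n(\QQ_p)$, so that $A' = S^\top A S$. The rank $n$ is preserved because $S$ is invertible, and $\det A' = (\det S)^2 \det A$ shows $d(Q') = d(Q)$ in $\QQ_p^*/(\QQ_p^*)^2$. Since the Hasse invariant $\varepsilon(Q)$ is defined via a diagonalization, the first subtask is to verify that $\varepsilon$ does not depend on that choice; this reduces to a direct computation with the Hilbert symbol, using bilinearity $(a,bc) = (a,b)(a,c)$, symmetry, and the basic identities $(a,-a) = 1$ and $(a,1-a) = 1$. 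Once well-definedness is in hand, invariance under similarity follows by tracking the effect of a coordinate change on the diagonal entries.

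For the \emph{reverse} direction I induct on $n$. The base case $n=1$ is immediate, since $a x^2 \sim b x^2$ iff $a/b \in (\QQ_p^*)^2$, which is exactly $d(Q)=d(Q')$. For $n=2$, a form $a_1 x_1^2 + a_2 x_2^2$ has $d = a_1 a_2$ and $\varepsilon = (a_1,a_2)$; using the paper's explicit description of $K = \QQ_p^*/(\QQ_p^*)^2$, one can enumerate the finitely many square classes of diagonal pairs and check that $(d,\varepsilon)$ separates them. A cleaner equivalent route is the binary representation lemma: a binary form represents $c \ne 0$ iff $(c,-d) = \varepsilon$, so two binary forms with matching invariants represent a common value, and splitting off that value reduces the problem to the $n=1$ case.

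For the inductive step $n \geq 3$ the strategy is to find some $c \in \QQ_p^*$ represented by both $Q$ and $Q'$, write $Q \sim c y^2 \oplus \widetilde{Q}$ and $Q' \sim c y^2 \oplus \widetilde{Q}'$ with $(n-1)$-dimensional residual forms, and apply the induction hypothesis to $\widetilde{Q}$ and $\widetilde{Q}'$. Their invariants are forced by those of $Q$ and $Q'$: plainly $d(\widetilde Q) = d(Q)/c$, and multiplicativity of the Hilbert symbol expresses $\varepsilon(\widetilde Q)$ as a fixed function of $c$, $d(Q)$, and $\varepsilon(Q)$, with the identical formula holding for $\widetilde{Q}'$. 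Existence of a common represented $c$ will come from the representation lemma over $\QQ_p$, which identifies the set of values of a form in terms of $(n,d,\varepsilon)$; for $n \geq 3$ every square class of $\QQ_p^*$ is represented with at most one exception, which is more than enough to secure a common value.

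The hard part will be the representation lemma: pinning down exactly which $c \in \QQ_p^*$ are represented by a given $p$-adic form. This rests on a case analysis of the Hilbert symbol for units and prime powers, drawing on the structure of $K$ recalled above, and is especially delicate in the residue classes $p \equiv 1,3 \mod 4$ and $p=2$ where $|K| = 4$ or $8$. Once this lemma is secured, together with well-definedness of $\varepsilon$, the remainder of the argument is bookkeeping with bilinear Hilbert symbol identities; no global or adelic machinery is required, since everything happens over a single local field.
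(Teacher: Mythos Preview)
The paper does not actually prove this theorem: it is stated with a terminal $\qed$ and no argument, as a classical result quoted from Cassels and Serre (the surrounding paragraph says explicitly that ``this material, albeit well-known, is included for the sake of a self-contained exposition''). So there is no proof in the paper to compare against.

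That said, your outline is the standard local classification argument one finds in Serre's \emph{A Course in Arithmetic} (Ch.~IV): invariance of $n$, $d$, $\varepsilon$ under linear equivalence (with the main labour being well-definedness of $\varepsilon$ independent of the chosen diagonalisation), followed by induction on $n$ using the representation criterion $c\in\QQ_p^*$ is represented iff the form $Q \oplus \langle -c\rangle$ is isotropic, together with Witt cancellation to strip off a common summand $\langle c\rangle$. Your sketch is correct in spirit; two points to tighten. First, you appeal to splitting $Q \sim \langle c\rangle \oplus \widetilde Q$ and then comparing $\widetilde Q$ with $\widetilde{Q}'$; to conclude $Q\sim Q'$ from $\widetilde Q\sim\widetilde{Q}'$ is immediate, but you will also need Witt cancellation (or an explicit computation) to ensure the residual forms are well-defined up to similarity, since otherwise the inductive hypothesis cannot be invoked cleanly. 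Second, for the existence of a common represented value when $n\geq 3$ you say ``every square class is represented with at most one exception''; this is exactly Theorem~\ref{teor:Qrepns0} applied to $Q\oplus\langle -c\rangle$, and for $n\geq 4$ the exception disappears. It would strengthen the write-up to make that dependence explicit rather than leave it as a ``representation lemma'' to be proved from scratch.
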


\begin{theor}
\label{teor:Qrepns0}
The quadratic form $Q$ on $\QQ_p^n$ represents $0$ nontrivially if and only if
\begin{itemize}
    \item $n=2$ and $d\simeq-1$ in $K$;
    \item $n=3$ and $\varepsilon=(-1,-d)$;
    \item $n=4$ and either $d\not\simeq1$ or $d\simeq1$ and $\varepsilon=(-1,-1)$;
    \item $n\geq5$. {\hfill\qed}
\end{itemize}
\end{theor}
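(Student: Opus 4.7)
The plan is to diagonalise $Q = \sum_{j=1}^n a_j x_j^2$ (available from the normal form used in the proof of Theorem~\ref{teor:equivpQ}) and translate each case into a Hilbert-symbol identity, using only bilinearity $(aa',b)=(a,b)(a',b)$, symmetry, invariance under multiplication by squares, and the consequence $(a,a)=(a,-1)$ of $(a,-a)=1$.

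The case $n=2$ is immediate: $a_1 x_1^2+a_2 x_2^2 = 0$ nontrivially forces $-a_2/a_1$ to be a square, equivalently $d\simeq -1$ in $K$. For $n=3$, rearrange $\sum_{j=1}^3 a_j x_j^2 = 0$ as $x_3^2 = (-a_1/a_3)x_1^2 + (-a_2/a_3)x_2^2$; by the very definition of the Hilbert symbol in the excerpt, isotropy is equivalent to $(-a_1/a_3,-a_2/a_3)=1$, which by square-invariance equals $(-a_1 a_3,-a_2 a_3)$. Expanding bilinearly, using $(a_3,a_3)=(a_3,-1)$ to cancel two factors, and regrouping the remaining terms yields
\[
  (-a_1 a_3,-a_2 a_3) \;=\; (-1,-1)\,(-1,a_1 a_2 a_3)\,\varepsilon \;=\; (-1,-d)\,\varepsilon,
\]
so the criterion is $\varepsilon=(-1,-d)$.

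For $n=4$, I would split $Q = B_1 \perp B_2$ with $B_1=a_1 x_1^2+a_2 x_2^2$ and $B_2=a_3 x_3^2+a_4 x_4^2$, and observe that $Q$ is isotropic iff $B_1$ and $-B_2$ represent a common value $t\in\QQ_p^*$. Each representability statement is itself an isotropy question for a ternary form ($B_i$ together with $\langle\mp t\rangle$), and the already-established $n=3$ criterion converts it into a Hilbert-symbol equation in $t$. Multiplying the two equations, the $t$-dependence cancels unless $d(B_1)\simeq d(B_2)$ modulo squares, i.e.\ exactly when $d(Q)\simeq 1$; in that residual case the surviving condition is precisely $\varepsilon(Q)=(-1,-1)$, giving the stated disjunction. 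For $n\geq 5$, restrict to a $5$-dimensional subspace and split it as $T\perp R$ with $\dim T = 3$, $\dim R = 2$; feeding $T\perp\langle -t\rangle$ into the $n=4$ criterion shows that an anisotropic ternary $T$ fails to represent at most one square class, while any binary $R$ represents at least $|K|/2$ classes, and since $|K|\in\{4,8\}$ the two sets of represented classes must intersect, producing an isotropic vector.

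I expect the main obstacle to lie in the $n=4$ step: following two coupled Hilbert-symbol equations in the auxiliary variable $t$ and separating cleanly the two disjuncts ``$d\not\simeq 1$'' and ``$d\simeq 1,\ \varepsilon=(-1,-1)$'' is the only genuinely nontrivial calculation; the accompanying counting lemma that anisotropic binary forms represent exactly $|K|/2$ square classes (needed for $n\geq 5$) is a finite check using the $n=3$ criterion class by class. The rest of the argument is direct bookkeeping of Hilbert symbols once the $n=3$ and $n=4$ criteria are in hand.
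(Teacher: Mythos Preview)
The paper does not actually prove this theorem: it is stated with the trailing $\qed$ as a known result and referred to Cassels and Serre, so there is no argument in the paper to compare against. Your outline is essentially Serre's proof, and the $n=2$, $n=3$ and $n\geq 5$ steps are sound as sketched; in particular your Hilbert-symbol reduction for $n=3$ is exactly the standard one.

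There is, however, a real gap in your $n=4$ step as phrased. Multiplying the two representability equations only gives a \emph{necessary} condition: the product $(t,-d(B_1))\,(t,-d(B_2))=\text{const}$ being satisfiable does not produce a single $t$ solving both equations simultaneously. What actually forces the dichotomy is a character argument. The conditions are $\chi_i(t)=c_i$ with $\chi_i=(\,\cdot\,,-d(B_i))$ characters on the elementary abelian $2$-group $K$. If $d(Q)\not\simeq 1$ then $\chi_1\chi_2=(\,\cdot\,,d(Q))$ is nontrivial, hence $\chi_1\neq\chi_2$; two distinct nontrivial characters on $(\ZZ/2\ZZ)^k$ are $\FF_2$-linearly independent, so $(\chi_1,\chi_2):K\to\{\pm1\}^2$ is onto and \emph{every} target pair $(c_1,c_2)$ is hit, giving isotropy unconditionally. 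If $d(Q)\simeq 1$ then $\chi_1=\chi_2$, the system collapses to one equation, and solvability is exactly $c_1=c_2$; unwinding this using $\varepsilon(Q)=(a_1,a_2)(a_3,a_4)(a_1a_2,a_3a_4)$ together with $a_1a_2\simeq a_3a_4$ yields $c_1=c_2\Leftrightarrow\varepsilon=(-1,-1)$. (If some $\chi_i$ is trivial then the corresponding $B_i$ is already isotropic, so $Q$ is too.) Replace ``multiply the two equations'' by this independence-of-characters step and the rest of your plan goes through.
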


We record explicitly the quadratic forms for $n=3$ up to equivalence,
separate by odd and even primes $p$.

\textbf{Prime $\mathbf{p}$ odd:}
There are exactly two inequivalent forms on $\QQ_p^3$,
\begin{align*}
  Q_+(\boldsymbol{x}) &= x_1^2 - v x_2^2 + p x_3^2, \\
  Q_0(\boldsymbol{x}) &= x_1^2 + x_2^2 + x_3^2,
\end{align*}
where
\begin{equation}
  \label{eq:woo}
  v=\begin{cases}
      -u & \text{ if } p\equiv1 \mod 4,\\
      -1 & \text{ if } p\equiv3 \mod 4,
    \end{cases}
\end{equation}
is a particular choice of a non-square in $\QQ_p$.

\textbf{Prime $\mathbf{p=2}$: }
There are exactly two inequivalent forms on $\QQ_2^3$,
\begin{align*}
  Q_+(\boldsymbol{x}) &= x_1^2 + x_2^2 + x_3^2, \\
  Q_0(\boldsymbol{x}) &= x_1^2 + x_2^2 - x_3^2.
\end{align*}

\textbf{Real Euclidean case}:
Also here, there are exactly two inequivalent forms on $\RR^3$,
\begin{align*}
  Q_+^\RR(\boldsymbol{x}) &= x_1^2 + x_2^2 + x_3^2, \\
  Q_0^\RR(\boldsymbol{x}) &= x_1^2 + x_2^2 - x_3^2.
\end{align*}

In all cases, the real and the $p$-adic ones, one form ($Q_+$) is
definite, in the sense that it does not represent zero nontrivially
(i.e., $Q_+(\boldsymbol{x})=0$ iff $\boldsymbol{x}=0$), while the
other ($Q_0$) is indefinite, in the sense that it has isotropic
vectors (i.e., $Q_0(\boldsymbol{x})=0$ for some $\boldsymbol{x}\neq
0$). The symmetry group $SO(Q_0)$ is always non-compact, as is
well-known in the real case and easy to see in general in the
$p$-adic case, since it has a hyperbolic component
\cite{GrossReeder}. On the other hand, in the real Euclidean case,
$SO(Q_+)$ is just the real $SO(3)_{\RR}$, which is a compact Lie
group, and as we would like to preserve the compactness in the
$p$-adic case, we define $SO(3)_p = SO(Q_+)$ for all primes $p$.
According to the above classification, this is a unique and
well-defined group for every prime $p$, which will indeed turn out
to be compact.

For $n=4$, we similarly focus only on the definite forms. It turns
out that again, for $\RR$ and all $\QQ_p$, there is exactly one up
to equivalence
\begin{equation}
  \notag
  Q_+^{(4)}(\boldsymbol{x})
         = \begin{cases}
             x_1^2 - v x_2^2 + p x_3^2 - vp x_4^2 & \text{ if } p \text{ odd}, \\
             x_1^2 + x_2^2 + x_3^2 + x_4^2        & \text{ if } p=2,           \\
             x_1^2 + x_2^2 + x_3^2 + x_4^2        & \text{ in the real case}.
           \end{cases}
\end{equation}
All other inequivalent forms represent zero nontrivially (and hence their
associated orthogonal groups are non-compact \cite{GrossReeder}),
a fact that we will exploit later.
This means, that also for $n=4$, both $\QQ_p^4$ and $\RR^4$ have essentially
unique groups $SO(4)_p$ and $SO(4)_{\RR}$. On the other hand, for $n\geq 5$,
it is known that for all primes $p$, there are no definite quadratic forms
on $\QQ_p^n$ \cite{Cassels,Serre}, unlike the real Euclidean case $\RR^n$,
where of course the sum of the squares of the standard coordinates is the
unique definite form up to equivalence, and where we thus have an unambiguous
compact Lie group $SO(n)_{\RR}$. Because of the lack of definite quadratic
forms for $n\geq5$, we refrain from speaking of $SO(n)_p$ without qualification.

Even in the real Euclidean case, $n=3$ is distinguished by several
geometric peculiarities. We highlight two of them, see
\cite[Ch.~4]{Eulerangle}.

\begin{theor}
\label{teor:dicougualeinP}
Elements of $SO(3)_{\RR}$ are rotations about axes in $\RR^3$, meaning that they
always have eigenvalue $1$, for which the corresponding eigenspace is the rotation axis.
{\hfill\qed}
\end{theor}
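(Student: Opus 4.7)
The plan is to exploit three facts specific to the real $3\times 3$ orthogonal case: the degree of the characteristic polynomial, the modulus constraint on eigenvalues coming from orthogonality, and the determinant normalisation to $+1$.

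First I would observe that the characteristic polynomial $\chi_L(\lambda)=\det(\lambda I - L)$ of $L\in SO(3)_{\RR}$ is a real cubic, and hence has at least one real root, since odd-degree real polynomials must. Next I would use that for any orthogonal $L$ (over $\RR$), $L^\top L = I$ implies $\|L\boldsymbol{x}\|=\|\boldsymbol{x}\|$, so every eigenvalue of $L$ (real or complex) has modulus $1$; over $\RR$ this means the real eigenvalues lie in $\{+1,-1\}$, and any non-real eigenvalues occur in complex conjugate pairs $\lambda,\bar\lambda$ with $\lambda\bar\lambda=1$.

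Now I would case-split on the factorisation of $\chi_L$ over $\RR$. Either all three roots are real, in which case each is $\pm 1$ and the constraint $\det L = \prod_i \lambda_i = +1$ forces an even number of $-1$'s, so at least one eigenvalue equals $+1$; or there is one real eigenvalue $\mu$ and a conjugate pair $\lambda,\bar\lambda$, in which case $\mu\cdot \lambda\bar\lambda = \mu = +1$. Either way, $+1$ is an eigenvalue, and any unit vector $\boldsymbol{u}$ in the corresponding eigenspace is fixed by $L$, hence serves as rotation axis. To finish the geometric picture, I would note that the orthogonal complement $\boldsymbol{u}^\perp$ is $L$-invariant (because orthogonality preserves $b(\cdot,\cdot)$, and if $\boldsymbol{v}\perp \boldsymbol{u}$ then $b(L\boldsymbol{v},\boldsymbol{u})=b(L\boldsymbol{v},L\boldsymbol{u})=b(\boldsymbol{v},\boldsymbol{u})=0$), so $L$ restricts to an element of $O(2)_{\RR}$ on this plane, and since $\det L = 1$ with the axis contributing a factor $1$, the restriction lies in $SO(2)_{\RR}$, i.e.\ is a planar rotation.

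The only genuine obstacle is the existence of a real eigenvalue, which here is essentially free from the odd-degree argument; the rest is linear-algebraic bookkeeping with eigenvalue moduli and the determinant. I would stress that this proof is specific to $\RR$ and to odd $n$: over $\QQ_p$ a cubic need not have a $\QQ_p$-rational root, and even when it does, "modulus one" is not available, so the analogue of this theorem in $SO(3)_p$ (proved later in the paper) must proceed by a genuinely different route rather than by importing this Euclidean argument.
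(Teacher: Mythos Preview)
Your argument is correct and is the standard textbook proof. Note, however, that the paper does not actually prove this theorem: it is stated as a classical fact (with a reference to Goldstein et al.) and closed with a bare $\qed$, so there is no paper proof to compare against directly.

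Your closing paragraph deserves a small correction, though. You say the $p$-adic analogue ``must proceed by a genuinely different route'' since over $\QQ_p$ there is neither an odd-degree root theorem nor a modulus-one constraint. The paper's proof of the $p$-adic version (Theorem~\ref{thm:rotation}) does avoid both of those tools, but the argument it uses is in fact \emph{more} general rather than merely different: from $L^\top A L = A$ one obtains that the eigenvalues of $L$ (in the algebraic closure) are closed under $\lambda\mapsto\lambda^{-1}$; with three eigenvalues and $\det L = 1$ this forces one of them to equal $1$, and the corresponding eigenspace is then automatically defined over the base field since $\det(L-I)=0$. That argument works verbatim over $\RR$, over $\QQ_p$, and indeed over any field, so the Euclidean and $p$-adic theorems can share a common proof. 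Your real-analytic approach (real cubic $\Rightarrow$ real root; orthogonality $\Rightarrow$ $|\lambda|=1$) is perfectly valid but less portable. The genuinely $\RR$-versus-$\QQ_p$ divergences in the paper appear later, in the structure of the planar groups $SO(2)_p^\kappa$ and in the Cardano/Euler decompositions, not in this result.
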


\begin{theor}
\label{teor:EulercompRot} With $R_x(\theta)$, $R_y(\eta)$,
$R_z(\phi)$ denoting the rotations around the reference axes of
$\RR^3$ by a given angle $\theta$, $\eta$ and $\phi$, every $R\in
SO(3)$ can be written as a product of three such rotations in any of
the following forms
 \beq\notag
\begin{array}{cccccc}
     R_xR_yR_z, &R_yR_zR_x, &R_zR_xR_y, &R_xR_zR_y, &R_zR_yR_x, &R_yR_xR_z, \\
     R_xR_yR_x, &R_xR_zR_x, &R_yR_xR_y, &R_yR_zR_y, &R_zR_xR_z, &R_zR_yR_z.
\end{array}
\eeq These angles are called \emph{Cardano} or \emph{Tait--Bryan
angles}, and also nautical angles, when applied to a form from the
first row, and (proper) \emph{Euler angles} when applied to a form
from the second row.

In both cases, the choice of the angles is unique (up to isolated points) modulo $2\pi$
radians for $\theta$ and $\phi$, and with the range of $\eta$ covering $\pi$ radians.
{\hfill\qed}
\end{theor}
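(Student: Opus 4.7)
The plan is to prove existence of one representative decomposition from each of the two rows and to deduce the remaining ten by permuting reference axes. For a representative Cardano ordering $R = R_z(\alpha) R_y(\beta) R_x(\gamma)$, my strategy is geometric: choose $\alpha, \beta$ so that the first two factors send $\boldsymbol{e}_1$ to $R\boldsymbol{e}_1$, and then read off $\gamma$. This is possible because $R_y(\beta)\boldsymbol{e}_1 = (\cos\beta, 0, -\sin\beta)^\top$ traces the great circle in the $xz$-plane as $\beta$ runs through $[-\pi/2, \pi/2]$, and $R_z(\alpha)$ rotates this circle around the $z$-axis, so $(\alpha, \beta) \mapsto R_z(\alpha) R_y(\beta) \boldsymbol{e}_1$ surjects onto the unit sphere $S^2 \subset \RR^3$. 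Once $\alpha, \beta$ are chosen, the matrix $[R_z(\alpha) R_y(\beta)]^{-1} R$ lies in $SO(3)_{\RR}$ and fixes $\boldsymbol{e}_1$, so by Theorem~\ref{teor:dicougualeinP} it is a rotation about $\boldsymbol{e}_1$, i.e.\ of the form $R_x(\gamma)$. For a representative proper Euler ordering $R = R_z(\alpha) R_x(\beta) R_z(\gamma)$ the identical scheme works with $\boldsymbol{e}_3$ in place of $\boldsymbol{e}_1$: now $R_x(\beta)\boldsymbol{e}_3 = (0, -\sin\beta, \cos\beta)^\top$ for $\beta \in [0, \pi]$ is the great circle that $R_z(\alpha)$ spreads over $S^2$, and the residual factor fixing $\boldsymbol{e}_3$ necessarily has the form $R_z(\gamma)$.

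Uniqueness and the stated angular ranges fall out of the same picture. In the Cardano case, the third component $(R\boldsymbol{e}_1)_3 = -\sin\beta$ pins down $\beta \in [-\pi/2, \pi/2]$; when $\cos\beta \neq 0$, the other two components determine $\alpha \bmod 2\pi$, and then $\gamma$ is recovered modulo $2\pi$ from any one other column of $R$. The Euler case is analogous, with $(R\boldsymbol{e}_3)_3 = \cos\beta$ fixing $\beta \in [0, \pi]$. The ``isolated points'' excluded from the uniqueness statement are precisely the gimbal-lock loci $\cos\beta = 0$ (Cardano) and $\sin\beta = 0$ (Euler), at which only the combination $\alpha \pm \gamma$ is determined and one recovers a one-parameter family of decompositions.

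To obtain the remaining five Cardano and five Euler orderings, I would conjugate the two proven decompositions by appropriate rotations $P \in SO(3)_{\RR}$ that permute the coordinate axes (signed permutation matrices of determinant $+1$), using the identity $P^{-1} R_a(\theta) P = R_{P^{-1}\boldsymbol{e}_a}(\pm\theta)$. Since $P^{-1}\boldsymbol{e}_a = \pm\boldsymbol{e}_b$ for some $b$, this precisely permutes the axis labels in a decomposition and transforms the ranges of the angles accordingly. The main obstacle I anticipate is keeping the sign conventions consistent under these relabellings and treating the gimbal-lock loci carefully; a brute-force case analysis across the twelve orderings would work but would be unilluminating, so this symmetry reduction is what keeps the argument clean.
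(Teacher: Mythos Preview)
The paper does not actually prove this theorem: it is stated as a classical fact with a reference to Goldstein's \emph{Classical Mechanics} and closed with a \qed\ symbol, as with the other background results in Section~\ref{sec:quadratic}. So there is no ``paper's own proof'' to compare against.

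That said, your sketch is correct and, pleasingly, it anticipates exactly the strategy the paper later uses in the $p$-adic setting (Theorem~\ref{teor:cardanoSO3p}): map $\boldsymbol{e}_1$ (or $\boldsymbol{e}_3$) to its image under $R$ using two axial rotations, then observe that the leftover factor fixes that axis and is therefore a rotation about it. One small quibble: your appeal to Theorem~\ref{teor:dicougualeinP} is slightly off-target, since what you need is not merely that the residual matrix has eigenvalue~$1$, but that an element of $SO(3)_{\RR}$ fixing $\boldsymbol{e}_1$ acts on $\boldsymbol{e}_1^\perp$ as an element of $SO(2)_{\RR}$; this is immediate from orthogonality and the determinant condition, but it is a separate (easy) observation. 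Your symmetry reduction via signed permutation matrices of determinant~$+1$ is sound: the octahedral rotation group realises all six permutations of the axis labels (with signs absorbed into the angles), which covers both the six Cardano and the six Euler patterns. The gimbal-lock caveat is handled correctly.
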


This theorem implies that three successive rotations relative to coordinate axes
generate every rotation in $\RR^3$. Hence, $SO(3)_{\RR}$ is generated by the rotations
around the three reference axes of $\RR^3$; in the case of the Euler decomposition,
actually only two reference axes.
Each of these rotation groups is an infinite cyclic Lie group, so $SO(3)_{\RR}$ is
generated by two cyclic subgroups linked by a non-commutative relation.

We have left out $n=2$ until now, which may seem strange coming from the
real Euclidean case, where it is the same as for all other $n$: there is a
unique definite quadratic form on $\RR^2$. Also on $\QQ_p^2$ there are
definite quadratic forms, but now they are not unique up to equivalence.
We will come back to them in detail below.

\section{Basic observations about $SO(3)_p$}
\label{sec:basic}
We start by deriving a few basic facts about the $p$-adic special orthogonal
groups in dimension $3$ from the definition. It is evidently a group under the
usual matrix multiplication. In fact, it is a topological group, with the operations
of multiplication and inverse being continuous with respect to the $p$-adic metric. Explicitly, the topology on $SO(3)_p$ is the one generated by the open balls with respect to the $p$-adic norm $||L||_p=||(\ell_{ij})_{ij}||_p\coloneqq \max_{i,j=1,2,3}|\ell_{ij}|_p$, where $|\,\cdot\,|_p$ denotes the $p$-adic absolute value on $\mathbb{Q}_p$.

\begin{theor}
\label{thm:compactness}
For every prime $p$, the group $SO(3)_p$ is compact. As a matter of fact,
it is a closed subset (with respect to the $p$-adic metric) of
$M_{3\times 3}(\ZZ_p)$, the set of matrices with $p$-adic integer entries,
and so $SO(3)_p \subset SL(3,\ZZ_p)$.
\end{theor}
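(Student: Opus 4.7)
The plan is to show three things in sequence: (i) that the entries of any $L \in SO(3)_p$ lie in $\ZZ_p$, so $SO(3)_p \subset M_{3\times 3}(\ZZ_p)$; (ii) that $SO(3)_p$ is closed in $M_{3\times 3}(\ZZ_p)$, hence compact; (iii) that $\det L = 1$ then forces $L \in SL(3,\ZZ_p)$. Step (iii) is immediate once (i) is in hand, and step (ii) is formal since $SO(3)_p$ is cut out by the polynomial equations $L^\top A L = A$ and $\det L = 1$, which are continuous in the $p$-adic metric; together with the compactness of $M_{3\times 3}(\ZZ_p) \simeq \ZZ_p^9$, this closes the argument. So the substantive step is (i).

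For (i), my plan is to establish a norm equivalence between the $p$-adic absolute value of $Q_+$ and the square of the $p$-adic max norm. Writing $\boldsymbol{x} = p^m \boldsymbol{y}$ with $\|\boldsymbol{y}\|_p = 1$ (so $\|\boldsymbol{x}\|_p = p^{-m}$ and at least one $y_i$ is a unit), I factor $Q_+(\boldsymbol{x}) = p^{2m} Q_+(\boldsymbol{y})$ and analyse $|Q_+(\boldsymbol{y})|_p$ by cases. For odd $p$ with $Q_+(\boldsymbol{y}) = y_1^2 - v y_2^2 + p y_3^2$: if $y_1$ or $y_2$ is a unit, then $y_1^2 - v y_2^2$ is a unit modulo $p$ because $v$ is a non-square, so $|Q_+(\boldsymbol{y})|_p = 1$; if $y_1, y_2 \in p\ZZ_p$ and $y_3$ is a unit, then $|Q_+(\boldsymbol{y})|_p = 1/p$. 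For $p=2$ with $Q_+(\boldsymbol{y}) = y_1^2+y_2^2+y_3^2$: counting the number of odd $y_i$'s modulo $4$ gives $|Q_+(\boldsymbol{y})|_2 \in \{1, 1/2\}$. In either case the conclusion is
\[
  \tfrac{1}{p}\|\boldsymbol{x}\|_p^2 \;\leq\; |Q_+(\boldsymbol{x})|_p \;\leq\; \|\boldsymbol{x}\|_p^2.
\]

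Given the equivalence, I apply it to the columns $\boldsymbol{c}_j$ of $L$. The identity $L^\top A L = A$ forces $Q_+(\boldsymbol{c}_j) = A_{jj}$, a fixed nonzero $p$-adic constant. The inequality then sandwiches $\|\boldsymbol{c}_j\|_p^2$ between two consecutive integer powers of $p$; since $\|\boldsymbol{c}_j\|_p$ is itself a power of $p$, this pins it down to a single value. A quick check of each diagonal entry $A_{jj}$ (in both the odd-$p$ form $1, -v, p$ and the even-$p$ form $1,1,1$) shows $\|\boldsymbol{c}_j\|_p = 1$ in every case, so every entry of $L$ lies in $\ZZ_p$.

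The main obstacle is really the norm-equivalence lemma: one must verify by hand that in every inequivalent diagonal presentation of $Q_+$ the unit-valued cases behave as claimed, and in particular that no vector $\boldsymbol{y}$ with $\|\boldsymbol{y}\|_p = 1$ can make $Q_+(\boldsymbol{y})$ have arbitrarily negative valuation. This is exactly where definiteness enters: if $Q_+$ represented $0$ nontrivially, an isotropic unit vector would break the lower bound, and $SO(3)_p$ would fail to be compact, consistent with the hyperbolic component of $SO(Q_0)$ mentioned above.
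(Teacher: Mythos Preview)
Your argument is correct, and the underlying arithmetic is the same as the paper's: in both proofs the crux is that for a primitive vector $\boldsymbol{y}$ (i.e.\ $\|\boldsymbol{y}\|_p=1$) one has $|Q_+(\boldsymbol{y})|_p \in \{1,\,1/p\}$, which for odd $p$ reduces to the fact that $y_1^2 - v y_2^2$ cannot vanish modulo $p$ unless both $y_1$ and $y_2$ do, and for $p=2$ to a parity count of odd coordinates modulo $4$. The paper carries this out column by column, assuming a negative valuation somewhere, clearing denominators, and reaching a contradiction modulo $p$ (resp.\ modulo $4$). You instead package the same computation as a clean two-sided inequality $\tfrac{1}{p}\|\boldsymbol{x}\|_p^2 \leq |Q_+(\boldsymbol{x})|_p \leq \|\boldsymbol{x}\|_p^2$ valid for all nonzero $\boldsymbol{x}$, and then read off $\|\boldsymbol{c}_j\|_p = 1$ from the parity of the exponent. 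This is a genuinely nicer organisation: the norm equivalence isolates the role of definiteness in a reusable lemma (and makes transparent why an isotropic form would break compactness), whereas the paper's direct contradiction is slightly more ad hoc. But the mathematical content is identical, so neither approach buys anything the other lacks.
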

\begin{proof}
Let $L=(\ell_{ij})_{ij} \in SO(3)_p$, and write $\ell_{ij}=p^{\nu_{ij}}u_{ij}$,
where $\nu_{ij}=\nu_p(\ell_{ij})\in\ZZ\cup\{+\infty\}$
is the $p$-adic valuation of $\ell_{ij}$, and $u_{ij}\in \UU_p$ is a
unit in $\ZZ_p$. We need to show that $\nu_{ij}\geq 0$ for all $i,j\in\{1,2,3\}$.

When $p$ is odd, the defining condition $A=L^\top AL$ of $SO(3)_p$
implies the following three relations
\begin{equation}\begin{split}
  \label{eq:sisconddiagdef}
  \ell_{11}^2-v\ell^2_{21}+p\ell_{31}^2 &=  1,\\
  \ell_{12}^2-v\ell^2_{22}+p\ell_{32}^2 &= -v,\\
  \ell_{13}^2-v\ell^2_{23}+p\ell_{33}^2 &=  p.
\end{split}\end{equation}
The first one is equivalent to
\beq\notag
  p^{2\nu_{11}}u_{11}^2-vp^{2\nu_{21}}u_{21}^2+p^{1+2\nu_{31}}u_{31}^2 = 1.
\eeq
Let us assume by contradiction that $\min\{\nu_{11},\nu_{21},\nu_{31}\}<0$.
If $\min\{2\nu_{11},\,2\nu_{21},1+2\nu_{31}\}=2\nu_{11}<0$ (similarly for $\nu_{21}$),
we multiply both sides of the equation by $p^{2\lvert \nu_{11}\rvert}$ to obtain
\beq\notag
  u_{11}^2-vp^{2(\nu_{21}-\nu_{11})}u_{21}^2+p^{1+2(\nu_{31}-\nu_{11})}u_{31}^2 = p^{2\lvert \nu_{11}\rvert},
\eeq
where every term is a $p$-adic integer. Hence, for every $k=1,\dots, 2\lvert \nu_{11}\rvert$,
\beq\notag
  u_{11}^2-vp^{2(\nu_{21}-\nu_{11})}u_{21}^2+p^{1+2(\nu_{31}-\nu_{11})}u_{31}^2 \equiv 0 \mod p^k,
\eeq
and in particular $u_{11}^2-vp^{2(\nu_{21}-\nu_{11})}u_{21}^2\equiv 0 \mod p$.
The quadratic form $x^2-vy^2$ does not represent $0$ in $\QQ_p$:
if $x,y\in\ZZ_p$ as in our case, the only solution to $x^2-vy^2\equiv 0 \mod p$ is
$(0,0)\in(\ZZ/p\ZZ)^2$. This gives a contradiction, since $u_{11}\in\UU_p$, in particular
$u_{11}^2\not\equiv 0\mod p$.

If instead $\min\{2\nu_{11},\,2\nu_{21},1+2\nu_{31}\}=1+2\nu_{31}<0$, we rewrite the
equation in terms of $p$-adic integers as
\beq\notag
  p^{2(\nu_{11}-\nu_{31})-1}u_{11}^2-vp^{2(\nu_{21}-\nu_{31})-1}u_{21}^2+u_{31}^2
      = p^{2\lvert\nu_{31}\rvert-1}.
\eeq
Reducing it modulo $p$ we get $u_{31}^2 \equiv 0\mod p$, which is in contradiction with
the hypothesis that $u_{31}\in\UU_p$.

The same can be done for the second and third equations of Eq.~\eqref{eq:sisconddiagdef},
since $-v,\,p\in\ZZ_p$.

When $p=2$, the defining condition $L^\top L = I$ implies
\beq\notag
  2^{2\nu_{1i}}u_{1i}^2+2^{2\nu_{2i}}u_{2i}^2+2^{2\nu_{3i}}u_{3i}^2=1, \text{ for } i=1,2,3.
\eeq
Again assuming by contradiction $\min\{\nu_{1i},\nu_{2i},\nu_{3i}\}=\nu_{1i}<0$
(the latter without loss of generality, by symmetry),
we rewrite the last equation in terms of $2$-adic integers as
\beq\notag
  u_{1i}^2+2^{2(\nu_{2i}-\nu_{1i})}u_{2i}^2+2^{2(\nu_{3i}-\nu_{1i})}u_{3i}^2 = 2^{2\lvert\nu_{1i}\rvert}.
\eeq
As a consequence it must hold
\beq\notag
  u_{1i}^2+2^{2(\nu_{2i}-\nu_{1i})}u_{2i}^2+2^{2(\nu_{3i}-\nu_{1i})}u_{3i}^2
       \equiv 0\mod 2^k,\ k=1,\dots, 2\lvert\nu_{1i}\rvert.
\eeq
The quadratic form $x^2+y^2+z^2$ does not represent $0$ in $\QQ_2$: as a matter of
fact, as $x,y,z$ are in $\ZZ_2$, it does not so in $\ZZ/4\ZZ$, unless
all three variables are $\equiv 0 \mod 4$; but
this is again in contradiction to $u_{1i}\in\UU_p$.

We have found $SO(3)_p\subset M_{3\times3}(\ZZ_p)$,
and the defining condition $\det L = 1$ implies $SO(3)_p\subset SL(3,\ZZ_p)$.
\end{proof}

\begin{theor}
\label{thm:rotation} All elements of $SO(3)_p$ are rotations,
i.e.,~they always have an eigenvalue $1$, and the corresponding
eigenspace is the rotation axis.
\end{theor}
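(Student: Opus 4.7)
The plan is to prove that $1 \in \QQ_p$ is always an eigenvalue of any $L \in SO(3)_p$, and then use a nonzero eigenvector $v$ for this eigenvalue to exhibit an $L$-invariant orthogonal decomposition $\QQ_p^3 = \QQ_p v \oplus v^\perp$ with respect to the bilinear form $b$ associated with $Q_+$. The restriction of $L$ to $\QQ_p v$ will be the identity and its restriction to $v^\perp$ a two-dimensional special orthogonal transformation, exhibiting $L$ geometrically as a rotation about the axis $\QQ_p v$.

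The crux is to show $\det(I - L) = 0$ over $\QQ_p$ itself (not merely in some extension), and I would do this by an algebraic manipulation that works uniformly in $p$. Starting from the defining relation $L^\top A L = A$, rewrite it as $L^\top = A L^{-1} A^{-1}$, so that $I - L^\top = A(I - L^{-1}) A^{-1}$ and hence $\det(I - L^\top) = \det(I - L^{-1})$. The left-hand side equals $\det(I - L)$ since a matrix and its transpose share a determinant, while on the right $\det L = 1$ together with $n = 3$ being odd gives
\[
  \det(I - L^{-1}) = \det(L^{-1})\det(L - I) = \det(L - I) = (-1)^3\det(I - L) = -\det(I - L).
\]
Combining these yields $2\det(I - L) = 0$. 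Since $2$ is a nonzero element of $\QQ_p$ for every prime $p$ (including $p=2$, where it merely has positive valuation), it is invertible and we conclude $\det(I - L) = 0$ uniformly. Hence $I - L$ has nontrivial kernel over $\QQ_p$, producing a fixed vector $v \neq 0$.

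The remaining steps are mostly formal. Because $Q_+$ is definite and $v\neq 0$, one has $b(v,v) = Q_+(v) \neq 0$, so $v \notin v^\perp$ and $\QQ_p^3 = \QQ_p v \oplus v^\perp$ is a genuine orthogonal direct sum. For every $w \in v^\perp$, $L$-invariance of $b$ together with $Lv = v$ gives $b(v, Lw) = b(Lv, Lw) = b(v, w) = 0$, so $v^\perp$ is $L$-invariant. The restriction $L|_{v^\perp}$ preserves the restricted (definite) form $b|_{v^\perp}$, and its determinant equals $\det L / \det(L|_{\QQ_p v}) = 1$, so $L$ acts on $v^\perp$ as a two-dimensional special orthogonal transformation while fixing the axis $\QQ_p v$ pointwise. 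This is exactly the geometric notion of a rotation, and the $1$-eigenspace coincides with the axis $\QQ_p v$ as long as $L|_{v^\perp}$ has no further eigenvalue $1$ (and equals all of $\QQ_p^3$ in the degenerate case $L = I$).

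I expect essentially no serious obstacle: the usual real-case argument proceeds via a continuity/intermediate-value argument that is unavailable over $\QQ_p$, so the main conceptual step is to replace it with the purely algebraic identity $2\det(I - L) = 0$. The only moment that deserves a second thought is the even prime $p = 2$, where one must verify that the factor of $2$ one divides out is genuinely nonzero in $\QQ_2$; since it plainly is, the argument is completely uniform in $p$.
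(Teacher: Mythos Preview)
Your argument is correct and takes a genuinely different route from the paper. You compute $\det(I-L)=0$ directly via the determinant identity coming from $L^\top = AL^{-1}A^{-1}$, working entirely over $\QQ_p$; the paper instead argues spectrally, observing that the eigenvalues of $L$ (implicitly over the algebraic closure) pair up as $\lambda \leftrightarrow \lambda^{-1}$, and then a short case analysis on three eigenvalues with product $1$ forces one of them to be $1$. Your approach is the classical Euler-rotation-theorem computation and is arguably cleaner: it stays over the base field throughout and works verbatim over any field of characteristic $\neq 2$, whereas the paper's argument tacitly passes to an extension to speak of eigenvalues before descending. On the other hand, the reciprocal pairing the paper uses foreshadows the later two-dimensional analysis. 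You also go further than the paper's proof by spelling out the $L$-invariant orthogonal decomposition $\QQ_p^3 = \QQ_p v \oplus v^\perp$ (using definiteness of $Q_+$ to guarantee $Q_+(v)\neq 0$) and identifying $L\vert_{v^\perp}$ as a two-dimensional special orthogonal map; the paper defers this to the discussion after the theorem, so your inclusion makes the ``rotation axis'' terminology precise within the proof itself.
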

\begin{proof}
Let $L\in SO(3)_p$, if  $\lambda$ is an eigenvalue of $L$, then also $\lambda^{-1}$ is.
In fact if $x$ is an eigenvector for $\lambda$, we have
$Ax=L^\top ALx=\lambda L^\top Ax\Rightarrow \lambda^{-1}Ax=L^\top Ax$. In other words,
$\lambda^{-1}$ is an eigenvalue of $L^\top $ with eigenvector $Ax$.
On the other hand, $L$ and $L^\top $ share the characteristic polynomial and
hence the eigenvalues.

Suppose now that none of the eigenvalues $\lambda_i$ of $L$, for $i=1,2,3$, are equal
to $1$. Then, $\lambda_i=\lambda_i^{-1}$ for all $i$ (equivalently
$\lambda_i^2=1$, thus $\lambda_i=\pm 1$), otherwise we can suppose,
for example, $\lambda_1\neq \lambda_1^{-1}=\lambda_2$ without loss
of generality, and hence $1=\det
L=\lambda_1\lambda_2\lambda_3=\lambda_3$, which is a contradiction.

Summing up, the only case which allows the condition proved before, excluding the
presence of $1$ among the eigenvalues, is $\lambda_1=\lambda_2=\lambda_3=-1$,
which is in contradiction with the condition $\det L=1$.
Thus, $L\in SO(3)_p$ has always $1$ as eigenvalue.
\end{proof}

Theorem \ref{thm:compactness} means that we have well-defined projection maps
\begin{align*}
  \pi_k: SO(3)_p            &\longrightarrow SO(3)_p \mod p^k \subset SL(3,\ZZ/p^k\ZZ), \nonumber\\
         L=(\ell_{ij})_{ij} &\longmapsto     (\ell_{ij} \mod p^k)_{ij},
\end{align*}
since all matrix entries of eligible $L$ are $p$-adic integers.
The images $SO(3)_p \mod p^k = \pi_k\bigl(SO(3)_p\bigr)$ are all
finite groups, forming a projective system under the (commuting)
projection maps of taking a number in $\ZZ/p^{k'}\ZZ$ modulo $p^k$
($k<k'$), which by slight abuse of notation we denote by $\pi_k$, too.
Just as the $p$-adic integers $\ZZ_p$ are the inverse (aka projective)
limit of the rings $\ZZ/p^k\ZZ$ connected by the modulo $p^k$ projections,
we conclude that $SO(3)_p$ is the inverse limit of the finite groups
$SO(3)_p \mod p^k$ connected by the projections $\pi_k$. This makes
$SO(3)_p$ a so-called \emph{profinite} group.

Theorem \ref{thm:rotation} means that we can describe arbitrary element $L\in SO(3)_p$
by first picking an axis of rotation, $\QQ_p \mathbf{n}$, with a nonzero
vector $\mathbf{n}\in\QQ_p^3$, construct the two-dimensional subspace
$V = \mathbf{n}^\perp = \{x : b(\mathbf{n},x) = 0\} \subset \QQ_p^3$
and consider the quadratic form $Q_{+\vert V}$, which is necessarily definite.
Thus, $L$ can be written as $L = L_V + b(\mathbf{n},\cdot)\frac{1}{Q_+(\mathbf{n})}\mathbf{n}$,
with a two-dimensional special orthogonal transformation $L_V \in SO(Q_{+\vert V})$.
In the next sections, we shall consider what forms can appear as restrictions
$Q_{+\vert V}$, and which are the different appearances of $SO(2)_p$.

\begin{lemm}
\label{prop:trasfortbasis}
For any two orthogonal bases
$\mathcal{B}=(\boldsymbol{v}_1,\boldsymbol{v}_2,\boldsymbol{v}_3)$ and
$\mathcal{C}=(\boldsymbol{w}_1,\boldsymbol{w}_2,\boldsymbol{w}_3)$ of $\QQ_p^3$,
there exists an orthogonal transformation $M:\bQ_p^3\to\bQ_p^3$ such that
$M\boldsymbol{v}_i=\boldsymbol{w}_i$ for all $i=1,2,3$ if and only if
$Q_+(\boldsymbol{v}_i)=Q_+(\boldsymbol{w}_i)$.

In that case, either $M$ is special or the orthogonal transformation $M'$
sending $\mathcal{B}$ to $\mathcal{C}'=(\boldsymbol{w}_1,-\boldsymbol{w}_2,\boldsymbol{w}_3)$
is special.
\end{lemm}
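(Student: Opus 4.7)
The plan is straightforward: the ``only if'' direction is immediate, and for the ``if'' direction I would construct the candidate map directly and verify that it preserves $Q_+$ by reducing the bilinear computation to single-vector values on the basis. The determinant claim then follows from a sign-flip trick.

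For necessity, if an orthogonal $M$ with $Mv_i = w_i$ exists, then $Q_+(w_i) = Q_+(Mv_i) = Q_+(v_i)$ by the very definition of $O(Q_+)$. For sufficiency, since $\mathcal{B}$ is a basis, there is a unique linear endomorphism $M$ of $\QQ_p^3$ determined by $Mv_i = w_i$, and it is invertible because $\mathcal{C}$ is also a basis. To check $M \in O(Q_+)$, I would pass to the associated symmetric bilinear form $b$. For an arbitrary $x = \sum_i x_i v_i$ we have $Mx = \sum_i x_i w_i$, so orthogonality of $\mathcal{C}$ gives
\[
  Q_+(Mx) = \sum_{i,j} x_i x_j\, b(w_i,w_j) = \sum_i x_i^2\, Q_+(w_i),
\]
and the analogous expansion for $x$ yields $Q_+(x) = \sum_i x_i^2\, Q_+(v_i)$ by orthogonality of $\mathcal{B}$. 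The hypothesis $Q_+(v_i) = Q_+(w_i)$ then delivers $Q_+(Mx) = Q_+(x)$ for every $x$.

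For the second assertion, the defining relation $M^\top A M = A$ forces $(\det M)^2 = 1$, hence $\det M \in \{\pm 1\}$. If $\det M = 1$ we are done. Otherwise, note that $\mathcal{C}' = (w_1, -w_2, w_3)$ is still an orthogonal basis with $Q_+(-w_2) = Q_+(w_2) = Q_+(v_2)$, so the construction above produces an orthogonal $M'$ sending $\mathcal{B}$ to $\mathcal{C}'$. Since the change of basis from $\mathcal{C}$ to $\mathcal{C}'$ is multiplication by $\mathrm{diag}(1,-1,1)$, we get $\det M' = -\det M = 1$, proving the claim. There is no genuinely hard step here; the only point to take care of is that the orthogonality check reduces, via the orthogonality of both bases, to matching $Q_+$-values on the basis vectors alone, which is precisely the hypothesis.
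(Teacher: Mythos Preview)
Your proof is correct and follows essentially the same approach as the paper: define the unique linear map on the basis, reduce the orthogonality check via bilinearity to matching $b(v_i,v_j)=b(w_i,w_j)$ (trivial for $i\neq j$ by orthogonality, and the hypothesis for $i=j$), and then use $\det M'=-\det M$ for the special-orthogonal claim. Your write-up is slightly more explicit in expanding $Q_+(Mx)$, but the argument is the same.
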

\begin{proof}
There exists a unique linear transformation $M:\bQ_p^3\to\bQ_p^3$ sending $
\mathcal{B}$ to $\mathcal{C}$. $M$ is an orthogonal transformation if and only if
it preserves the bilinear form on $\bQ_p^3$ associated to $Q_+$.
It is enough to show $b(\boldsymbol{v}_i,\boldsymbol{v}_j)=b(\boldsymbol{w}_i,\boldsymbol{w}_j)$
for all $i,j\in\{1,2,3\}$. For $i\neq j$ this holds as we assume orthogonal bases.
For $i=j$ it amounts to $Q_+(\boldsymbol{v}_i)=Q_+(\boldsymbol{w}_i)$ for every $i\in\{1,2,3\}$.

$\mathcal{C}'$ is an orthogonal basis like $\mathcal{C}$, and the orthogonal
transformation $M$ exists if and only if the orthogonal $M'$ exists.
An orthogonal transformation has determinant $\pm1$, and as $\det M' = -\det M$,
exactly one of the two will have determinant $1$.
\end{proof}

In contrast to $\bQ_p^3$, every vector in $\RR^3$ can be normalized to $1$, as
$Q_+(\boldsymbol{v})\simeq 1$ modulo squares for all $\boldsymbol{v}\in\RR^3\setminus \boldsymbol{0}$ (indeed $\RR^\ast/(\RR^\ast)^2\simeq\{\pm1\}$ and $Q_+^\RR(\boldsymbol{v})>0$ for every $\boldsymbol{v}\in\RR^3\setminus \boldsymbol{0}$).
As a consequence, every orthogonal basis of $\RR^3$ can be made orthonormal, and
there always exists a proper or improper rotation mapping any orthonormal basis
to any other orthonormal basis. On the other hand, in $\QQ_p^3$ we can only consider
orthogonal bases due to the existence of different kinds of vectors, depending on the value in $K=\QQ_p^\ast/(\QQ_p^\ast)^2$ that the quadratic form $Q_+$ takes on the vectors of $\QQ_p^3\setminus \boldsymbol{0}$. In this case the condition of preservation of the quadratic form becomes relevant.

\begin{prop}
\label{prop:exrotiff}
Given $\boldsymbol{n}\in\bQ_p^3\setminus \boldsymbol{0}$, for any non-zero vectors
$\boldsymbol{v},\,\boldsymbol{w}\in\boldsymbol{n}^\perp\subset\bQ_p^3$
there exists a rotation $\cR_{\boldsymbol{n}}\in SO(3)_p$ such that
$\cR_{\boldsymbol{n}}\boldsymbol{v}=\boldsymbol{w}$ if and only if
$Q_+(\boldsymbol{v})=Q_+(\boldsymbol{w})$.
\end{prop}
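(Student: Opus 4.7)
The forward implication is immediate from the definition: any $\cR_{\boldsymbol{n}} \in SO(3)_p$ preserves $Q_+$, so $Q_+(\boldsymbol{v}) = Q_+(\cR_{\boldsymbol{n}}\boldsymbol{v}) = Q_+(\boldsymbol{w})$. The real work is the converse, and my plan is to reduce it to Lemma \ref{prop:trasfortbasis} by constructing two suitable orthogonal bases of $\QQ_p^3$ and adjusting them until the $Q_+$-values match pairwise.

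First I would note that $Q_+$ restricted to the two-dimensional plane $\boldsymbol{n}^\perp$ is anisotropic (since $Q_+$ itself is definite), hence nondegenerate. So Gram--Schmidt works inside $\boldsymbol{n}^\perp$: starting from $\boldsymbol{v}$ (resp.~$\boldsymbol{w}$), I can pick any vector of $\boldsymbol{n}^\perp$ not proportional to $\boldsymbol{v}$ (resp.~$\boldsymbol{w}$) and subtract its $\boldsymbol{v}$-component (resp.~$\boldsymbol{w}$-component) to obtain $\boldsymbol{v}'$ (resp.~$\boldsymbol{w}'$) with $b(\boldsymbol{v},\boldsymbol{v}') = 0 = b(\boldsymbol{w},\boldsymbol{w}')$ and nonzero. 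This yields two orthogonal bases $\mathcal{B} = (\boldsymbol{n}, \boldsymbol{v}, \boldsymbol{v}')$ and $\mathcal{C} = (\boldsymbol{n}, \boldsymbol{w}, \boldsymbol{w}')$ of $\QQ_p^3$.

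To apply the lemma I need the three $Q_+$-values along $\mathcal{B}$ to equal those along $\mathcal{C}$. The first two are fine by construction, so the key step is to fix the third. Since $\mathcal{B}$ and $\mathcal{C}$ are orthogonal bases for the same form $Q_+$, the matrix of $Q_+$ is diagonal in both, and the two diagonal products $Q_+(\boldsymbol{n})Q_+(\boldsymbol{v})Q_+(\boldsymbol{v}')$ and $Q_+(\boldsymbol{n})Q_+(\boldsymbol{w})Q_+(\boldsymbol{w}')$ differ by $(\det S)^2$ for the change-of-basis $S$; hence they agree modulo $(\QQ_p^*)^2$. Combined with $Q_+(\boldsymbol{v}) = Q_+(\boldsymbol{w})$ this gives $Q_+(\boldsymbol{v}')/Q_+(\boldsymbol{w}') = c^2$ for some $c \in \QQ_p^*$, and replacing $\boldsymbol{v}'$ by $c^{-1}\boldsymbol{v}'$ (which stays inside $\boldsymbol{n}^\perp$ and orthogonal to $\boldsymbol{v}$) equalises the remaining value.

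At this point Lemma \ref{prop:trasfortbasis} produces an orthogonal transformation $M$ with $M\boldsymbol{n}=\boldsymbol{n}$, $M\boldsymbol{v}=\boldsymbol{w}$, $M\boldsymbol{v}'=\boldsymbol{w}'$; if $\det M = -1$, I replace $\boldsymbol{w}'$ by $-\boldsymbol{w}'$ in $\mathcal{C}$ and invoke the second part of the lemma to produce the special orthogonal variant. Either way, the resulting $M \in SO(3)_p$ fixes the axis $\QQ_p\boldsymbol{n}$ (so qualifies as $\cR_{\boldsymbol{n}}$ per Theorem \ref{thm:rotation}) and sends $\boldsymbol{v}$ to $\boldsymbol{w}$. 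I expect the only mild subtlety to be the discriminant-matching argument—making sure the implicit rescaling of $\boldsymbol{v}'$ is truly available over $\QQ_p$ and not just over an extension—but this is exactly what anisotropy of $Q_+|_{\boldsymbol{n}^\perp}$ together with equality of diagonal determinants mod squares guarantees.
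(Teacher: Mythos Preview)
Your proof is correct and follows essentially the same route as the paper: complete $\boldsymbol{v},\boldsymbol{n}$ and $\boldsymbol{w},\boldsymbol{n}$ to orthogonal bases, use the determinant invariant of $Q_+$ to see that the remaining $Q_+$-values agree modulo squares, rescale to make them equal, and invoke Lemma~\ref{prop:trasfortbasis} with a sign flip if necessary. The only cosmetic discrepancy is that the lemma, as stated, flips the \emph{second} basis vector, whereas in your ordering $(\boldsymbol{n},\boldsymbol{w},\boldsymbol{w}')$ you want to flip the third; either reorder your bases as $(\boldsymbol{v},\boldsymbol{v}',\boldsymbol{n})$ and $(\boldsymbol{w},\boldsymbol{w}',\boldsymbol{n})$ (as the paper does) or simply note that the determinant argument in the lemma works for any one of the three slots.
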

\begin{proof}
The direct implication is trivial by definition of $SO(3)_p$.

Conversely, for the orthogonal pair $\boldsymbol{v},\boldsymbol{n}$ there exists a unique vector
$\boldsymbol{v}'\in\bQ_p^3\setminus \boldsymbol{0}$, up to nonzero scalar multiples,
which completes $\boldsymbol{v},\boldsymbol{n}$ to an orthogonal basis
$\mathcal{B}=(\boldsymbol{v},\boldsymbol{v}',\boldsymbol{n})$.
Similarly, we can complete the pair $\boldsymbol{w},\boldsymbol{n}$ to an orthogonal
basis  $\mathcal{C}=(\boldsymbol{w},\boldsymbol{w}',\boldsymbol{n})$.

Now we prove that $Q_+(\boldsymbol{v}')$ and $Q_+(\boldsymbol{w}')$ are
in the same class in $K$. Indeed, $Q_+$ has diagonal matrix
representation on any orthogonal basis. We write the matrix
representation of $Q_+$ on the basis $\mathcal{B}$ as
$M=\text{diag}(q_1,q_2,q_3)$ for every
$\boldsymbol{v}'\in\{\boldsymbol{v},\boldsymbol{n}\}^\perp$ and on
$\mathcal{C}$ as $M'=\text{diag}(q_1',q_2',q_3')$ for every
$\boldsymbol{w}'\in\{\boldsymbol{w},\boldsymbol{n}\}^\perp$. The
hypothesis $Q_+(\boldsymbol{v})=Q_+(\boldsymbol{w})$ translates into
$q_1'=q_1$, and $q_3'=q_3$, the latter since $\mathcal{B}$ and
$\mathcal{C}$ share the third vector $\boldsymbol{n}$. The
determinant is an invariant of $Q_+$ in $K$, $\det M\simeq\det M'$: we
have $q_1q_2q_3=q_1q_2'q_3\,\lambda^2$ for some
$\lambda\in\bQ_p^\ast$, which gives $q_2=q_2'\,\lambda^2$, i.e.,
$Q_+(\boldsymbol{v}')=Q_+(\boldsymbol{w}')\lambda^2=Q_+(\lambda\boldsymbol{w}')$.

Therefore, there exists $\tilde{\boldsymbol{w}}'=\lambda\boldsymbol{w}'$ such that
$\tilde{\mathcal{C}}=(\boldsymbol{w},\tilde{\boldsymbol{w}}',\boldsymbol{n})$ is
an orthogonal basis for $\bQ_p^3$ and $Q_+(\boldsymbol{v}')=Q_+(\tilde{\boldsymbol{w}}')$.
To this basis we apply Lemma \ref{prop:trasfortbasis}: there exists a special
orthogonal transformation $\cR_{\boldsymbol{n}}:\bQ_p^3\to\bQ_p^3$
such that $\cR_{\boldsymbol{n}}\boldsymbol{v}=\boldsymbol{w}$,
$\cR_{\boldsymbol{n}}\boldsymbol{v}'=\tilde{\boldsymbol{w}}'$ and
$\cR_{\boldsymbol{n}}\boldsymbol{n}=\boldsymbol{n}$. 
In fact, either the orthogonal transformation $\cR_{\boldsymbol{n}}$ is already special,
or we choose $\tilde{\boldsymbol{w}}'=-\lambda\boldsymbol{w}'$ rather than
$\tilde{\boldsymbol{w}}'=\lambda\boldsymbol{w}'$ in the basis $\tilde{\mathcal{C}}$.
\end{proof}

We remark that there exists a transformation in $SO(3)_p$ rotating the
line $\QQ_p\boldsymbol{v}$ to the line $\QQ_p\boldsymbol{w}$ if and only
if $Q_+(\boldsymbol{v})\simeq Q_+(\boldsymbol{w})$. This is trivially satisfied in $\RR^3$,
where any line can be rotated to any other line. However, the necessary
condition of preservation of $Q_+$  implies that there does not always exist
a matrix in $SO(3)_p$ rotating a direction in $\bQ_p^3$ to another one.
For example, there is no element of $SO(3)_p$ for $p$ odd, rotating the $x$-axis of
$\bQ_p^3$ to the $z$-axis: $Q_+(\mu\boldsymbol{e}_1)=\mu^2\simeq 1$ and
$Q_+(\lambda \boldsymbol{e}_3)=\lambda^2 p\simeq p$ belong to different classes in $K$
for every $\mu,\lambda\in\bQ_p^\ast$.

The reverse implication of the previous proposition implies the following.

\begin{prop}
\label{prop:transitive}
Given a direction $\boldsymbol{n}\in\bQ_p^3$, the action of the subgroup of
$SO(3)_p$ of matrix rotations around $\boldsymbol{n}$ is transitive on the
equivalence classes $Q_+^{-1}(q)=\{\boldsymbol{v}\in\bQ_p^3\,:\,Q_+(\boldsymbol{v})=q\}$.

More generally, for any two vectors
$\boldsymbol{v},\boldsymbol{w}\in \QQ_p^3\setminus \boldsymbol{0}$
with $Q_+(\boldsymbol{v}) = Q_+(\boldsymbol{w})$, there exists an $L\in SO(3)_p$
with $L\boldsymbol{v} = \boldsymbol{w}$. This is achieved by choosing
$\boldsymbol{n} \in \{\boldsymbol{v},\boldsymbol{w}\}^\perp$.
{\hfill\qed}
\end{prop}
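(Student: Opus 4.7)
The plan is to derive this directly from Proposition \ref{prop:exrotiff}, which already does the heavy lifting. The proposition now only organizes that statement in two useful ways: first as transitivity of the fixed-axis rotation subgroup on the level sets of $Q_+$ inside $\boldsymbol{n}^\perp$, and second as transitivity of the whole group $SO(3)_p$ on the level sets $Q_+^{-1}(q) \subset \QQ_p^3$, provided one makes a clever choice of axis.

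For the first claim I would read it as follows: fix $\boldsymbol{n}\neq\boldsymbol{0}$, and let $\boldsymbol{v},\boldsymbol{w}\in\boldsymbol{n}^\perp$ with $Q_+(\boldsymbol{v})=Q_+(\boldsymbol{w})=q$. Then Proposition \ref{prop:exrotiff} supplies an $\cR_{\boldsymbol{n}}\in SO(3)_p$ fixing $\boldsymbol{n}$ (hence a rotation around $\boldsymbol{n}$) with $\cR_{\boldsymbol{n}}\boldsymbol{v}=\boldsymbol{w}$. This is exactly the transitivity claim on $Q_+^{-1}(q)\cap\boldsymbol{n}^\perp$, and the only thing to verify is that this is the correct interpretation of the statement (rotations around $\boldsymbol{n}$ fix $\boldsymbol{n}$ and therefore only act meaningfully inside $\boldsymbol{n}^\perp$).

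For the second, global, claim I would proceed by cases on the dimension of $U:=\mathrm{span}(\boldsymbol{v},\boldsymbol{w})$. If $\boldsymbol{v}$ and $\boldsymbol{w}$ are linearly independent, so $\dim U=2$, I would argue that $U^\perp$ is $1$-dimensional: because $Q_+$ is definite, its restriction $Q_{+\vert U}$ cannot vanish on any nonzero vector, so $Q_{+\vert U}$ is nondegenerate and hence $\QQ_p^3 = U \oplus U^\perp$ with $\dim U^\perp = 1$. Choose any $\boldsymbol{n}\in U^\perp\setminus\boldsymbol{0}$; then $\boldsymbol{v},\boldsymbol{w}\in\boldsymbol{n}^\perp$, so Proposition \ref{prop:exrotiff} gives an $L=\cR_{\boldsymbol{n}}\in SO(3)_p$ with $L\boldsymbol{v}=\boldsymbol{w}$. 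If instead $\boldsymbol{w}=\lambda\boldsymbol{v}$, then $Q_+(\boldsymbol{w})=\lambda^2 Q_+(\boldsymbol{v})$ forces $\lambda=\pm 1$; for $\lambda=+1$ take $L=\mathrm{Id}$, while for $\lambda=-1$ pick any $\boldsymbol{n}\in\boldsymbol{v}^\perp\setminus\boldsymbol{0}$ (which exists since $\boldsymbol{v}^\perp$ is $2$-dimensional by definiteness) and again invoke Proposition \ref{prop:exrotiff}.

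The only subtle point, which I view as the main obstacle to a naive reading, is justifying that $\{\boldsymbol{v},\boldsymbol{w}\}^\perp$ is nontrivial when $\boldsymbol{v},\boldsymbol{w}$ are linearly independent; this is precisely where definiteness of $Q_+$ enters. The rest of the argument is bookkeeping that packages Proposition \ref{prop:exrotiff} in the two forms stated.
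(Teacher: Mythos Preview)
Your proposal is correct and follows the same route as the paper, which simply records the proposition as an immediate consequence of Proposition~\ref{prop:exrotiff} (the proof in the paper is literally the sentence ``The reverse implication of the previous proposition implies the following'' followed by a \qed). You are more careful than the paper in two respects: you make explicit that the first claim should be read as transitivity on $Q_+^{-1}(q)\cap\boldsymbol{n}^\perp$, and you spell out why $\{\boldsymbol{v},\boldsymbol{w}\}^\perp$ is nontrivial (via definiteness of $Q_+$) together with the degenerate case $\boldsymbol{w}=\pm\boldsymbol{v}$, all of which the paper leaves implicit.
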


As noted above, there are different classes of vectors in
$\mathbb{Q}_p^3$, i.e., $\{\boldsymbol{v}\in
\mathbb{Q}_p^3\backslash\boldsymbol{0}\,:\,Q_+(\boldsymbol{v})\simeq
k\}$ for $k\in\mathbb{Q}_p^\ast/(\mathbb{Q}_p^\ast)^2$. This is one
of the fundamental obstacles in the study of $SO(3)_p$, which marks
a first departure from the real case.

\section{Planar case: the three (seven) incarnations of $SO(2)_p$}
\label{sec:planar}
By virtue of Theorem \ref{thm:rotation}, the elements of $SO(3)_p$ are transformations
occurring in the plane $V$ orthogonal to the rotation axis $\boldsymbol{n}$ in $\bQ_p^3$.
Thus we move to classify the definite quadratic forms on $\bQ_p^2$ and analyze the
corresponding symmetry groups.
We will derive a parameterization for the rotations in these groups and for $SO(3)_p$,
and show that the restrictions of the rotations of $SO(3)_p$ to the plane orthogonal to
their rotation axis realize all the classes of rotation groups of the $p$-adic plane.
Indeed, we will show that the restrictions $Q_{+\vert V}$ of the three-dimensional
definite form $Q_+$, where $V\subset \QQ_p^3$ varies over all two-dimensional
subspaces, exhaust all two-dimensional definite quadratic forms, up to equivalence
(similarity and scaling).

Beginning with odd primes $p$, the invariants of the two-dimensional
quadratic forms $Q(x,y)=ax^2+by^2$, $x,y\in\QQ_p$, characterized by
pairs $(a,b)\in K^2$, give the following $8$ distinct equivalence
classes
\begin{longtable}{|c|c|l|l|}
\hline
$d$ & $\varepsilon$ & Representative $(a,b)$ & $(a,b)$ of equivalent forms\\\hline
$1$ & $1$ & $(1,1)$ & \parbox{6cm}{$(u,u)$ \\
                                and $(p,p)$, $(up,up)$ for $p\equiv1\mod4$} \\\hline
$1$ & $-1$ & $(p,p)$ for $p \equiv 3\mod4$ & $(up,up)$ for $p\equiv3\mod4$\\\hline
$u$ & $1$ & $(1,u)$ & $(p,up)$ for $p\equiv3\mod4$\\\hline
$u$ & $-1$ & $(p,up)$ for $p\equiv 1\mod4$&\\\hline
$p$ & $1$ & $(1,p)$ &\\\hline
$p$ & $-1$ & $(u,up)$ &\\\hline
$up$ & $1$ & $(1,up)$ &\\\hline
$up$ & $-1$ & $(u,p)$ &\\\hline
\end{longtable}

Considering scaling, we can always make $a=1$, without loss of
generality. Thus, there are four different equivalence classes of
quadratic forms on $\QQ_p^2$ up to equivalence for every odd prime
$p$ \beq
  \label{eq:labelsino}
  x^2+y^2,\quad x^2+uy^2,\quad x^2+py^2,\quad ux^2+py^2
\eeq (it will become clear in a moment why we choose the last form
as written, and not as $x^2+upy^2$ or $x^2+p/uy^2$).

To determine which of them represents zero, we need to check whether $d\simeq -1$ in
$K$. 
First, $-1\in(\bQ_p^\ast)^2$ for $p\equiv1\mod 4$, so $-1\simeq 1\in K$;
on the other hand, $-1\simeq u\in K$ for $p\equiv3\mod 4$ because $-1\notin(\bQ_p^\ast)^2$.
Now, $x^2+y^2$ has $d=1$: it represents $0$ for $p\equiv1\mod4$, but not for $p\equiv3\mod4$.
$x^2+uy^2$ has $d=u$: it represents $0$ for $p\equiv3\mod4$, but not for $p\equiv1\mod4$.
The last two forms of \eqref{eq:labelsino} do not represent $0$.

Proceeding similarly for $p=2$, we get the following $16$ classes of
quadratic forms on $\bQ_2^2$
\begin{longtable}{|c|c|l|l|}
\hline
$d$ & $\varepsilon$ & Representative $(a,b)$ & $(a,b)$ of equivalent forms\\\hline
$1$ & $1$ & $(1,1)$ &$(2,2),\,(5,5),\,(10,10)$\\\hline
$1$ & $-1$ & $(-1,-1)$& $(-2,-2),\,(-5,-5),\,(-10,-10)$\\\hline
$-1$ & $1$ & $(1,-1)$ & $(2,-2),\,(5,-5),\,(10,-10)$\\\hline
$-1$ & $-1$ & N/A${}^\dagger$ &N/A\\\hline
$2$ & $1$ & $(1,2)$ &$(-5,-10)$\\\hline
$2$ & $-1$ & $(-1,-2)$ &$(5,10)$\\\hline
$-2$ & $1$ & $(1,-2)$ &$(-1,2)$\\\hline
$-2$ & $-1$ & $(5,-10)$ &$(-5,10)$\\\hline
$5$ & $1$ & $(1,5)$ &$(-2,-10)$\\\hline
$5$ & $-1$ & $(-1,-5)$ &$(2,10)$\\\hline
$-5$ & $1$ & $(1,-5)$ &$(-1,5)$\\\hline
$-5$ & $-1$ & $(2,-10)$ &$(-2,10)$\\\hline
$10$ & $1$ & $(-2,-5)$ &$(1,-10)$\\\hline
$10$ & $-1$ & $(2,5)$ &$(-1,-10)$\\\hline
$-10$ & $1$ & $(1,-10)$ &$(-1,10)$\\\hline
$-10$ & $-1$ & $(2,-5)$ &$(-2,5)$\\\hline
\end{longtable}
\noindent
${}^\dagger${\small No $2$-adic quadratic form in dimension two can have determinant
$-1$ together with Hasse invariant $-1$.}

They reduce to the following $8$ classes up to scaling
\begin{alignat}{4}
  &x^2+y^2,  &\quad& x^2-y^2,  &\quad& x^2+2y^2,  &\quad& x^2-2y^2, \nonumber\\
  &x^2+5y^2, &\quad& x^2-5y^2, &\quad& x^2+10y^2, &\quad& x^2-10y^2.\nonumber
\end{alignat}
Only the second one represents $0$. In summary, we get the following classification.

\begin{prop}
\label{prop:quadform2} The quadratic forms on $\QQ_p^2$ that do not
represent $0$, are, up to equivalence
\begin{itemize}
  \item the following $3$ for every odd prime $p$
    \beq\notag
    Q_{-v}(\boldsymbol{x}) =x^2-vy^2,\quad
    Q_{p}(\boldsymbol{x})    =x^2+py^2,\quad
    Q_{up}(\boldsymbol{x}) =ux^2+py^2,
    \eeq
    where $v$ is as in Eq. \eqref{eq:woo};

  \item the following $7$ for $p=2$
    \begin{alignat}{2}
      Q_1(\boldsymbol{x})       &=x^2+y^2,\quad&
      Q_{\pm2}(\boldsymbol{x})  &=x^2\pm2y^2,\nonumber\\
      Q_{\pm5}(\boldsymbol{x})  &=x^2\pm5y^2,\quad&
      Q_{\pm10}(\boldsymbol{x}) &=x^2\pm10y^2.\nonumber
    \end{alignat}
\end{itemize}
The subscript of these quadratic forms denotes their determinant.
{\hfill\qed}
\end{prop}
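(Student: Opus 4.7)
The plan is to classify the nondegenerate two-dimensional quadratic forms on $\QQ_p^2$ in three stages: first up to similarity, then modulo scaling, and finally by discarding those that represent zero nontrivially.

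In the first stage, Theorem \ref{teor:equivpQ} tells us that similarity classes are labelled by the pair $(d, \varepsilon) \in K \times \{\pm 1\}$. Diagonalizing every form as $ax^2 + by^2$ with $(a, b) \in K \times K$ and regrouping by $(d, \varepsilon) = (ab, (a, b))$ produces the two tables of the excerpt: at most $2|K| = 8$ classes for odd $p$ and $16$ for $p = 2$. In each case, one row is necessarily vacuous, namely the one with $(d, \varepsilon) = (-1, -1)$: this follows from the Hilbert symbol identity $(a, -a) = 1$, because if $ab \simeq -1$ in $K$, then $b \simeq -a$, and hence $(a, b) = (a, -a) = 1$.

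In the second stage, the scaling $Q \mapsto tQ$ acts on $(a, b)$ by $(a, b) \mapsto (ta, tb)$. For $n = 2$ the discriminant is invariant, since $t^2 \simeq 1$; and by bilinearity of the Hilbert symbol together with $(t, t) = (t, -1)$, one computes $\varepsilon' = (t, -d)\,\varepsilon$. Thus whenever $-d$ is not a square in $\QQ_p^*$, some $t$ gives $(t, -d) = -1$, which flips $\varepsilon$ and fuses the two similarity classes with the given $d$ into a single scaling class; and when $d \simeq -1$, scaling preserves $\varepsilon$, but by Stage 1 only $\varepsilon = 1$ arises anyway. Consequently the similarity classes collapse to exactly $|K|$ scaling classes, one per value of $d$: the four representatives of Eq.~\eqref{eq:labelsino} for odd $p$, and the eight implicit in the second table for $p = 2$.

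In the third stage, Theorem \ref{teor:Qrepns0} removes the single scaling class with $d \simeq -1$. Identifying this class in $K$: $-1 \simeq 1$ for $p \equiv 1 \mod 4$, $-1 \simeq u$ for $p \equiv 3 \mod 4$, and $-1 \simeq -1$ for $p = 2$. The discarded form is thus $x^2 + y^2$, $x^2 + uy^2$, or $x^2 - y^2$ respectively. For odd $p$, the three survivors are $x^2 - vy^2$ (absorbing the two subcases using the definition of $v$ in Eq.~\eqref{eq:woo}), $x^2 + py^2$, and $ux^2 + py^2$; for $p = 2$, the seven forms of the proposition remain. The principal bookkeeping step is Stage 2 at $p = 2$, where one must track the scaling action of the eight elements of $K$ across fifteen similarity classes; fortunately this is already recorded in the \emph{equivalent forms} column of the second table, so the real work reduces to checking that that column correctly partitions the similarity classes into scaling orbits.
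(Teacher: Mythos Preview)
Your argument is correct and follows the same three-stage architecture as the paper (classify up to similarity via $(d,\varepsilon)$, reduce modulo scaling, discard the isotropic class via Theorem~\ref{teor:Qrepns0}). The only genuine difference is in Stage~2: the paper handles scaling essentially by inspection, asserting that ``we can always make $a=1$'' and then reading off the reduced lists from the tables, whereas you compute the action of scaling on the invariants directly, obtaining $\varepsilon' = (t,-d)\,\varepsilon$ from bilinearity of the Hilbert symbol together with $(t,t)=(t,-1)$. This is a cleaner and more uniform justification: it explains in one stroke why the $2|K|-1$ similarity classes collapse to exactly $|K|$ scaling classes (the $\varepsilon$-pair with a given $d\not\simeq-1$ fuses because $(t,-d)=-1$ is solvable, while for $d\simeq-1$ there was only $\varepsilon=1$ to begin with by $(a,-a)=1$), and it works identically for odd $p$ and for $p=2$ without appealing to the explicit tables. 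The paper's approach, by contrast, has the virtue of exhibiting concrete representatives (including the slightly irregular choice $ux^2+py^2$), which you ultimately still need to name the forms in the statement.
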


When $p\equiv1\mod4$, the three definite forms are the restrictions of the
definite three-dimensional one to the planes $xy$, $xz$ and $yz$.
This is not true for $p\equiv3\mod4$, where $Q_+(\boldsymbol{x})$
reduces to $Q_{-v}(\boldsymbol{x})$ for $z=0$ and to the same $Q_p(\boldsymbol{x})$ for
both $x=0$ and $y=0$.
Similarly, the restriction of the definite three-dimensional form for $p=2$
to any reference plane $xy$, $xz$ and $yz$ gives only $Q_1(\boldsymbol{x})$ among the seven
possible forms in dimension two.

In contrast to the unique special orthogonal group $SO(2)_\RR$ on the real plane, there are
three (compact) special orthogonal groups on $\QQ_p^2$ for odd prime $p$, and seven for $p=2$,
up to isomorphisms, according to the last proposition. We call them $SO(2)_p^\kappa$ with
$\kappa$ denoting the determinant of the preserved quadratic form: these groups are
\beq
  \label{eq:so2pkappadef}
  SO(2)_p^\kappa=\{L\in\cM_{2\times2}(\QQ_p)\,:\,A_\kappa=L^\top A_\kappa L,\ \det L=1\},
\eeq endowed with the common matrix product. $A_\kappa$ denotes the
matrix representation of the quadratic form $Q_\kappa$ in the
canonical basis \beq\notag A_{-v}=\begin{pmatrix} 1&0\\0&-v
\end{pmatrix},\quad A_p=\begin{pmatrix} 1&0\\0&p \end{pmatrix},\quad
A_{up}=\begin{pmatrix} u&0\\0&p \end{pmatrix}, \eeq \beq\notag
A_1=\begin{pmatrix} 1&0\\0&1 \end{pmatrix},\quad
A_{\pm2}=\begin{pmatrix} 1&0\\0&\pm2 \end{pmatrix},\quad
A_{\pm5}=\begin{pmatrix} 1&0\\0&\pm5 \end{pmatrix},\quad
A_{\pm10}=\begin{pmatrix} 1&0\\0&\pm10 \end{pmatrix}. \eeq

We now proceed to the parametrisation of the two-dimensional
rotation groups. Consider the transformation of a vector in
$\QQ_p^2$ by some $\cR_\kappa\in SO(2)_p^\kappa$ \beq\notag
  \begin{pmatrix} x'\\y' \end{pmatrix}
  =
  \begin{pmatrix} a&b\\c&d \end{pmatrix}
  \begin{pmatrix} x\\y \end{pmatrix}.
\eeq
The orthogonality condition for $\cR_\kappa$ yields the system of equations
\beq
\label{eq:systso23}
\left\{\begin{aligned} &a^2+\alpha_\kappa c^2=1,\\
                       &b^2+\alpha_\kappa d^2=\alpha_\kappa,\\
                       &ab+\alpha_\kappa cd=0,
       \end{aligned}\right.
\eeq
where $\alpha_\kappa=-v,p,\frac{p}{u},1,\pm2,\pm5,\pm10$, respectively for the
quadratic forms with $\kappa=-v,p,up,\,p\neq2$ and $\kappa=1,\pm2,\pm5,\pm10,\,p=2$.
We parametrize the solutions of this system of three equations in four $p$-adic
unknowns through
\beq
\label{eq:paramsigmachiarot2}
\sigma=\pm\frac{c}{1+a}\in\QQ_p
\eeq
for $a\in\bZ_p\backslash\{-1\}$, and treat the case $a=-1$ separately.
The first equation of the system \eqref{eq:systso23} gives
\beq\notag
\alpha_\kappa=\frac{1-a^2}{c^2}
\eeq
when $c\neq0$. Manipulating Eq. \eqref{eq:paramsigmachiarot2},
we get $\frac{1+a}{c^2}=\frac{1}{(1+a)\sigma^2}$, $\sigma\neq0$, so
\beq
\label{eq:foralphaa2}
\alpha_\kappa=\frac{1-a}{(1+a)\sigma^2}\Rightarrow(1+\alpha_\kappa\sigma^2)a=1-\alpha_\kappa\sigma^2.
\eeq
The parameter $-\alpha_\kappa$ is not a square for any $\kappa$ of the considered
quadratic forms (because none of them represents zero).
This means that $1+\alpha_\kappa\sigma^2\neq0$ for all $\alpha_\kappa$ and $\sigma\in\QQ_p$.
Now Eq. \eqref{eq:foralphaa2} gives
\beq\notag
a=\frac{1-\alpha_\kappa\sigma^2}{1+\alpha_\kappa\sigma^2}.
\eeq
The first equation of \eqref{eq:systso23} now provides
\beq\notag
  c^2 = \frac{1-a^2}{\alpha_\kappa}=\frac{4\sigma^2}{\left(1+\alpha_\kappa\sigma^2\right)^2}
  \Rightarrow
  c = \pm_c \frac{2\sigma}{1+\alpha_\kappa\sigma^2}.
\eeq
From the third equation of \eqref{eq:systso23}, when $a\neq0$, we get
\beq
\label{eq:anot0}
b^2=\alpha_\kappa\left(\frac{1}{a^2}-1\right)d^2
\eeq
which plugged into the second one, gives $d^2=a^2$, hence $d=\pm_da$.

Now the second equation of \eqref{eq:systso23} gives
\beq\notag
b^2=\alpha_\kappa^2c^2 \Rightarrow b=\pm_b\frac{2\alpha_\kappa\sigma}{1+\alpha_\kappa\sigma^2}.
\eeq
We arrive at
\beq
\label{eq:detmatpersegnisco}
\cR_\kappa(\sigma) =
\begin{pmatrix}
  a(\sigma)&b(\sigma)\\
  c(\sigma)&d(\sigma)
\end{pmatrix} =
\begin{pmatrix}
   \phantom{\pm_c}\frac{1-\alpha_\kappa\sigma^2}{1+\alpha_\kappa\sigma^2}
     &\pm_b \frac{2\alpha_\kappa\sigma}{1+\alpha_\kappa\sigma^2}\\
   \pm_c \frac{2\sigma}{1+\alpha_\kappa\sigma^2}
     &\pm_d\frac{1-\alpha_\kappa\sigma^2}{1+\alpha_\kappa\sigma^2}
\end{pmatrix},
\eeq where the signs $\pm_b,\pm_c,\pm_d$ are a priori unrelated. The
determinant of the matrix \eqref{eq:detmatpersegnisco} is \beq\notag
\pm_d\left(\frac{1-\alpha_\kappa\sigma^2}{1+\alpha_\kappa\sigma^2}\right)^2
-\pm_b\pm_c\alpha_\kappa\left(\frac{2\sigma}{1+\alpha_\kappa\sigma^2}\right)^2=\frac{\pm_d1-2\alpha_\kappa\sigma^2(\pm_d1+\pm_b\pm_c2)\pm_d\alpha_\kappa^2\sigma^4}{1+2\alpha
_\kappa\sigma^2+\alpha_\kappa^2\sigma^4}, \eeq and it must be $1$.
This happens when $\pm_d1=1$ and $\pm_d1+\pm_b\pm_c2=-1$, i.e.,
$\pm_c=\mp_b$. Therefore, \beq\label{eq:rotaz2param2x2}
\cR_\kappa(\sigma)=
\begin{pmatrix}
  \phantom{\pm}\frac{1-\alpha_\kappa\sigma^2}{1+\alpha_\kappa\sigma^2}
     &\mp \frac{2\alpha_\kappa\sigma}{1+\alpha_\kappa\sigma^2}\\
  \pm \frac{2\sigma}{1+\alpha_\kappa\sigma^2}
     &\phantom{\pm}\frac{1-\alpha_\kappa\sigma^2}{1+\alpha_\kappa\sigma^2}
\end{pmatrix},
\eeq
with linked signs.

In the derivation, at Eq. \eqref{eq:anot0}, we had to assume $a\neq0$.
When $a=0$, we have $b^2=\alpha_\kappa$, $c^2=1/\alpha_\kappa$, $d=0$,
which provides a solution when $\alpha_\kappa$ is a square. In this case,
$a=0$ corresponds to $\sigma=\pm c$, included in Eq. \eqref{eq:rotaz2param2x2}.

The case $c=0$ left out in the system of equations gives $\cR_\kappa=\pm I$,
using that the determinant is $1$. Thus, $c=0$ is equivalent to $\sigma=0$ when
$a\neq-1$ in Eq. \eqref{eq:paramsigmachiarot2}, leading to $\cR_\kappa(0)= I$.
This leaves one solution of Eq. \eqref{eq:rotaz2param2x2} unaccounted for,
which is $\cR_\kappa=- I$, reached from the system of equations with $a=-1$,
for which $\sigma$ is not well-defined in $\QQ_p$. But this sign change from
the identity matrix can be included in Eq. \eqref{eq:rotaz2param2x2} by considering
$\sigma\in\QQ_p\cup\{\infty\}$.

\begin{rem}
\label{oss:inftyinclusion} Note that $\lim\limits_{n\to\infty}
p^n=0$ in $\bQ_p$ with respect to the $p$-adic norm, since $|p^n|_p$ converges to $0$ by increasing $n$. Then, $\infty$ on the
$p$-adic field is formally introduced as the limit
$\lim\limits_{n\to\infty}p^{-n}$ (the inverse of $0$). Indeed the
matrix entries in Eq. \eqref{eq:rotaz2param2x2} are such that \beq\notag
\begin{aligned}
&\lim_{n\to\infty}\frac{1-\alpha_\kappa p^{-2n}}{1+\alpha_\kappa p^{-2n}}=
\lim_{n\to\infty}\frac{p^{-2n}(p^{2n}-\alpha_\kappa)}{p^{-2n}(p^{2n}+\alpha_\kappa)}=-1,\\
&\lim_{n\to\infty}\frac{2p^{-n}}{1+\alpha_\kappa p^{-2n}} = \lim_{n\to\infty}\frac{p^{-2n}(2p^{n})}{p^{-2n}(p^{2n}+\alpha_\kappa)}=0
\end{aligned}
\eeq
yielding $\cR_\kappa(\infty)=-I$.
\end{rem}

As a parameter $\sigma$ has a sign that is linked to its inverse
(as we are going to see in a moment): it can swap between $+$ and $-$ variations of
$b$ and $c$ even when we fix one choice of $\pm_c$. Hence we can keep only
one sign in Eq. \eqref{eq:rotaz2param2x2} without loss of generality.
We have thus proved the following.

\begin{theor}
\label{thm:SO2} A rotation of $SO(2)_p^\kappa$ takes the following
matrix form in the canonical basis of $\QQ_p^2$ \beq
\label{eq:rotazgeneric2} \cR_\kappa(\sigma)=
\begin{pmatrix}
  \frac{1-\alpha_\kappa\sigma^2}{1+\alpha_\kappa\sigma^2}
    & -\frac{2\alpha_\kappa\sigma}{1+\alpha_\kappa\sigma^2}\\
  \frac{2\sigma}{1+\alpha_\kappa\sigma^2}
    & \phantom{-}\frac{1-\alpha_\kappa\sigma^2}{1+\alpha_\kappa\sigma^2}
\end{pmatrix},
\eeq
with $\sigma\in\QQ_p\cup\{\infty\}$, $\alpha_\kappa\in\{-v,p,\frac{p}{u}\}$ and $\alpha_k\in\{1,\pm2,\pm5,\pm10\}$ respectively for $\kappa=-v,p,up$ ($p$ odd)
and $\kappa=1,\pm2,\pm5,\pm10$ ($p=2$).

In addition, this parametrization is one-to-one, i.e., it results in
different special orthogonal transformations for different $\sigma$.
{\hfill\qed}
\end{theor}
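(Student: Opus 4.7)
The plan is to derive the parametrisation directly from the defining relation of $SO(2)_p^\kappa$ in Eq.~\eqref{eq:so2pkappadef}. Writing a generic element as $\cR_\kappa = \begin{pmatrix} a & b \\ c & d \end{pmatrix}$, the condition $A_\kappa = \cR_\kappa^\top A_\kappa \cR_\kappa$ is just the system \eqref{eq:systso23} in the four unknowns $a,b,c,d\in\QQ_p$, to be solved together with $ad-bc=1$. I would then introduce the parameter $\sigma = c/(1+a)$, a stereographic-type substitution well-defined whenever $a\neq -1$, and handle the exceptional case $a=-1$ separately by adjoining $\sigma=\infty$ as in Remark~\ref{oss:inftyinclusion}.

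Next, using the first equation of \eqref{eq:systso23} to eliminate $c$, I would derive $\alpha_\kappa = (1-a)/((1+a)\sigma^2)$, hence $a=(1-\alpha_\kappa\sigma^2)/(1+\alpha_\kappa\sigma^2)$ and $c = \pm 2\sigma/(1+\alpha_\kappa\sigma^2)$; the remaining two equations then pin $b,d$ up to signs, and imposing $\det\cR_\kappa = 1$ forces $d=+a$ together with the link $\pm_c = \mp_b$ for the off-diagonal signs, producing the matrix \eqref{eq:rotazgeneric2}. A crucial sanity check at this stage is that the denominator $1+\alpha_\kappa\sigma^2$ never vanishes for any $\sigma\in\QQ_p$: this uses precisely Proposition~\ref{prop:quadform2}, because $\alpha_\kappa$ runs over exactly the coefficients of the two-dimensional definite forms, so $-\alpha_\kappa \notin (\QQ_p^\ast)^2$, i.e.\ $-1/\alpha_\kappa$ is not a square. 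The exceptional value $a=-1$ therefore never occurs for finite $\sigma$, and the limit computation already carried out in Remark~\ref{oss:inftyinclusion} shows that $\cR_\kappa(\infty) = -I$, closing the parametrisation.

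For the one-to-one claim I would argue by directly inverting the map: given a matrix of the form \eqref{eq:rotazgeneric2} with finite $\sigma$, the ratio of the $(2,1)$-entry to $1$ plus the $(1,1)$-entry is
\[
  \frac{c}{1+a} = \frac{2\sigma/(1+\alpha_\kappa\sigma^2)}{2/(1+\alpha_\kappa\sigma^2)} = \sigma,
\]
so $\sigma$ is uniquely recovered from $\cR_\kappa(\sigma)$; the matrix $-I$ only arises at $\sigma=\infty$, since $1+a\neq 0$ for finite $\sigma$ by the previous paragraph. Finally, the redundancy in the sign choices $\pm_b,\pm_c$ that appeared in the derivation collapses under the substitution $\sigma\mapsto -\sigma$, so a single sign convention as in \eqref{eq:rotazgeneric2} captures the full group.

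The only genuinely delicate step is the bookkeeping of signs when solving \eqref{eq:systso23}: one must check carefully that $d=+a$ (not $-a$) is forced by the determinant constraint, and that the two a priori independent signs $\pm_b,\pm_c$ are locked into a single sign via $\pm_c=\mp_b$, so that the final parametrisation really is a bijection rather than a two-to-one map. Everything else is a routine manipulation of the quadratic system, using only that $-\alpha_\kappa$ is a non-square to guarantee that the inversion formulas stay in $\QQ_p$.
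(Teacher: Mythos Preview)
Your proposal is correct and follows essentially the same route as the paper's own derivation preceding the theorem: solve the system \eqref{eq:systso23} via the stereographic substitution $\sigma=c/(1+a)$, use that $-\alpha_\kappa$ is a non-square to ensure $1+\alpha_\kappa\sigma^2\neq0$, fix the signs through $\det=1$, and close with $\sigma=\infty$ for $-I$. Your explicit inversion $\sigma=c/(1+a)$ for the injectivity claim is the same formula the paper gives later in Remark~\ref{rem:omeopar}; the only small bookkeeping point you gloss over is the separate treatment of $a=0$ (needed because the paper's derivation divides by $a$ at Eq.~\eqref{eq:anot0}), but this is easily absorbed into the final parametrisation as the paper notes.
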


\begin{rem}
\label{oss:swapsign} For every $\kappa$ of the definite rank-$2$
quadratic forms \beq \label{eq:switchsign}
\cR_\kappa\left(-\frac{1}{\alpha_\kappa\sigma}\right)=-\cR_\kappa\left(\sigma\right),
\ \  \sigma\in\bQ_p\cup\{\infty\}, \eeq because if we replace
$\sigma\mapsto -\frac{1}{\alpha_\kappa\sigma}$ in the matrices
\eqref{eq:rotazgeneric2}, then the matrix elements change as follows
\begin{align*}
  \frac{1-\alpha_\kappa\sigma^2}{1+\alpha_\kappa\sigma^2}
     &\mapsto\frac{1-\alpha_\kappa(\alpha_\kappa\sigma)^{-2}}{1+\alpha_\kappa(\alpha_\kappa\sigma)^{-2}}
       =\frac{\alpha_\kappa\sigma^2-1}{\alpha_\kappa\sigma^2+1}
       =-\frac{1-\alpha_\kappa\sigma^2}{1+\alpha_\kappa\sigma^2},\nonumber\\
  -\frac{2\alpha_\kappa\sigma}{1+\alpha_\kappa\sigma^2}
     &\mapsto-\frac{2\alpha_\kappa(-\alpha_\kappa\sigma)^{-1}}{1+\alpha_\kappa(\alpha_\kappa\sigma)^{-2}}
       =\frac{2\alpha_\kappa\sigma}{1+\alpha_\kappa\sigma^2},\\
  \frac{2\sigma}{1+\alpha_\kappa\sigma^2}
     &\mapsto \frac{2(-\alpha_\kappa\sigma)^{-1}}{1+\alpha_\kappa(\alpha\sigma)^{-2}}
       =-\frac{2\sigma}{1+\alpha_\kappa\sigma^2}.\nonumber
\end{align*}
We need $\sigma\in\bQ_p\cup\{\infty\}$ in order for this transformation to be
well-defined for $\sigma=0$, too, which is guaranteed by
\beq\notag
\cR_\kappa\left(\infty\right)=-\cR_\kappa\left(0\right)=- I.
\eeq
\end{rem}

\begin{rem}
\label{oss:perriduzmodpk}
As a corollary of the parameterization \eqref{eq:rotazgeneric2}, we can see
directly that the matrix entries
$\frac{1-\alpha_\kappa\sigma^2}{1+\alpha_\kappa\sigma^2}$,
$\frac{2\sigma}{1+\alpha_\kappa\sigma^2}$ and
$-\frac{2\alpha_\kappa\sigma}{1+\alpha_\kappa\sigma^2}$ of the rotations
are $p$-adic integers for every $\alpha_\kappa$, every $\sigma\in\QQ_p\cup\{\infty\}$
and every prime $p$,
in accordance with Theorem \ref{thm:compactness}.
This can be easily checked for parameters $\sigma\in\ZZ_p$:
\begin{itemize}
\item $1+\alpha_\kappa\sigma^2\not\equiv0\mod p$, for every $\kappa,\,\sigma\in\bZ_p,\,p\neq2$, for $\kappa=\pm2,\pm10,\,\sigma\in\bZ_2$, and for $\kappa=1,\pm5,\,\sigma\in2\bZ_2$. In these cases $(1+\alpha_\kappa\sigma^2)^{-1}\in\bU$, which multiplied with $1-\alpha_\kappa\sigma^2,\,2\sigma,\,-2\alpha_\kappa\sigma\in\bZ_p$ give $p$-adic integer matrix entries;
\item If $\sigma\in\bZ_2$, $\sigma\equiv1\mod2$, then $1+\sigma^2,\,1\pm5\sigma^2\equiv0\mod2$: they are invertible in $\bQ_2$ but not in $\bZ_2$. Here the calculus modulo $2$ is not enough to verify that the associated matrix entries are $2$-adic integers. But writing $\sigma=1+2\sigma',\,\sigma'\in\bZ_2$ we get     \beq\notag\begin{aligned}
    &\frac{1-\sigma^2}{1+\sigma^2}=\frac{-4(\sigma'+{\sigma'}^2)}{2\big(1+2(\sigma'+{\sigma'}^2)\big)}=\frac{-2(\sigma'+{\sigma'}^2)}{1+2(\sigma'+{\sigma'}^2)},\\
    &\frac{2\sigma}{1+\sigma^2}=\frac{2(1+2\sigma')}{2\big(1+2(\sigma'+{\sigma'}^2)\big)}=\frac{1+2\sigma'}{1+2(\sigma'+{\sigma'}^2)}.
\end{aligned}\eeq
$1+2(\sigma'+{\sigma'}^2)\not\equiv0\mod2$ is invertible in $\bZ_2$.
\end{itemize}
We have shown that the entries of the matrices of $SO(2)_p^\kappa$ are $p$-adic integers when $\sigma\in\bZ_p$, for every $\kappa$ and prime $p$. But then we can distinguish two branches for $\sigma\in\bQ_p\cup\{\infty\}$: either $\sigma\in\bZ_p$ or $\sigma^{-1}\in p\bZ_p$ (including $\infty$ formally when $\sigma=0$). Since we want to exploit Eq. \eqref{eq:switchsign} for a non integer parameter, either $\sigma\in\bZ_p$ or:
\begin{itemize}
    \item $\sigma=-\frac{1}{\alpha_\kappa\tau},\ \tau\in p\bZ_p$, for $p\neq2,\kappa=-v$ and for $p=2,\kappa=1,\pm5$,
    \item $\sigma=-\frac{1}{\alpha_\kappa\tau},\ \tau\in \bZ_p$, for $p\neq2,\kappa=p,up$ and for $p=2,\kappa=\pm2,\pm10$.
\end{itemize}
In each of the cases of the first point
\beq\notag\begin{aligned}
 SO(2)_p^\kappa &=\left\{\cR_\kappa(\sigma)\,:\,\sigma\in\bQ_p\cup\{\infty\}\right\} \\
   &=\left\{\cR_\kappa(\sigma)\,:\,\sigma\in\bZ_p\right\}
      \cup \left\{\cR_\kappa\left(-\frac{1}{\alpha_\kappa\tau}\right)\,:\,\tau\in p\bZ_p\right\}\\
   &=\left\{\cR_\kappa(\sigma)\,:\,\sigma\in\bZ_p\right\}
      \cup \left\{-\cR_\kappa\left(\tau\right)\,:\,\tau\in p\bZ_p\right\}.
\end{aligned}\eeq
The second set includes $\cR_\kappa(\infty)$ for $\tau=0$.

Similarly for all the cases in the second point
\beq\notag
 SO(2)_p^\kappa
   =\left\{\cR_\kappa(\sigma)\,:\,\sigma\in\bQ_p\cup\{\infty\}\right\}
   =\left\{\cR_\kappa(\sigma)\,:\,\sigma\in\bZ_p\right\}
     \cup \left\{-\cR_\kappa\left(\tau\right)\,:\,\tau\in \bZ_p\right\}.
\eeq
In this way, the parameterisation of the groups $SO(2)_p^\kappa$ is in terms
of $p$-adic integers only. The second branch for the parameter $\sigma$ gives
the same matrix entries of the first one up to a sign: these entries are $p$-adic
integers as shown for the first branch.

This is useful when projecting $SO(3)_p$ modulo $p^k$, to be able to enumerate
the elements of its subgroups of rotations around the reference axes \cite{Ilaria:tesi}.
\end{rem}

\textbf{Interlude: Euclidean geometrical interpretation.}
The introduction of a parameter $\sigma$ leading to $a=\frac{1-\alpha\sigma^2}{1+\alpha\sigma^2}$
in Eq. \eqref{eq:rotazgeneric2} is inspired by Euclidean geometry.
Indeed, recalling the \emph{tangent half-angle formulae}
\beq\notag
  \cos\theta=\frac{1-\tan^2(\theta/2)}{1+\tan^2(\theta/2)},
  \quad
  \sin\theta=\frac{2\tan(\theta/2)}{1+\tan^2(\theta/2)},
\eeq
and letting
\beq
\label{eq:condINTsigma}
\alpha_\kappa=1, \quad \sigma=\tan\left(\frac{\theta}{2}\right)\in\RR,
\eeq
  Eq. \eqref{eq:rotazgeneric2} takes the form
\beq\notag
\cR(\theta) =
\begin{pmatrix}
  \cos\theta&-\sin\theta\\\sin\theta&\cos\theta
\end{pmatrix},
\eeq
which is the matrix for a rotation by an angle $\theta$ in the real plane. The
special case $-I$ is given by $\theta=\pi$, for which $\tan(\theta/2)$ diverges.
It is as if we have been treating rotations in $\QQ_p^2$ in terms of the tangent
of the rotation angle rather than the angle itself. It is worth noticing that
the usage of tangent of angles (in place of angles) in trigonometry dates back
to ancient Babylon: the Babylonian table, \emph{Plimpton 322} (written about 1800 BC),
may be interpreted as a first trigonometric table, where Pythagorean triples were
written as ratios between the sides of right triangles \cite{Cowen}.

This geometrical interpretation motivates our notation $\cR_\kappa(\sigma)$ for
a rotation of parameter $\sigma$.

\textbf{Composition laws of rotations.}
The composition of two rotations of $SO(2)_p^\kappa$, for any fixed $\kappa$,
is given by the product of their matrix forms \eqref{eq:rotazgeneric2}. It turns out to
take the very simple form
\beq
  \label{eq:compSamxi2}
  \cR_\kappa(\sigma)\cR_\kappa(\tau)=\cR_\kappa\left(\frac{\sigma+\tau}{1-\alpha_\kappa\sigma\tau}\right),
\eeq
for every $\sigma,\tau\in\bQ_p\cup\{\infty\}$. For the infinitely many values of
$\sigma,\tau\in\bQ_p$ such that $1-\alpha_\kappa\sigma\tau=0$, the right-hand-side
is $\cR_\kappa(\infty) = -I$, according to Remark \ref{oss:inftyinclusion}.
If $\tau\in\bQ_p$ while $\sigma$ is $\infty$ (or vice versa), the argument on
the right-hand side is
\beq\notag
  \lim_{n\to\infty}\frac{p^{-n}+\tau}{1-\alpha_\kappa p^{-n}\tau}
   =\lim_{n\to\infty}\frac{p^{-n}(1+p^n\tau)}{p^{-n}(p^n-\alpha_\kappa\tau)}
   =-\frac{1}{\alpha_\kappa\tau},
\eeq
so Eq. \eqref{eq:compSamxi2} becomes
$-\cR_\kappa(\tau)=\cR_\kappa\left(-\frac{1}{\alpha_\kappa\tau}\right)$ in agreement
with Eq. \eqref{eq:switchsign}. If both $\sigma$ and $\tau$ are $\infty$, the above
composition gives correctly the identity, since the rotation parameter on the right is
\beq\notag
\lim_{n,m\to\infty}\frac{p^{-n}+p^{-m}}{1-\alpha_\kappa p^{-n-m}}
  =\lim_{n,m\to\infty}\frac{p^{-n-m}(p^m+p^n)}{p^{-n-m}(p^{n+m}-\alpha_\kappa)}
  =0.
\eeq

We recall that
\beq\notag
\tan(\alpha+\beta)=\frac{\tan\alpha+\tan\beta}{1-\tan\alpha\tan\beta}
\eeq
which is equivalent to the arguments of Eq. \eqref{eq:compSamxi2} if we use
Eq. \eqref{eq:condINTsigma}.

\begin{prop}
\label{prop:subAXISgroup2}
$SO(2)_p^\kappa$ defined in Eq. \eqref{eq:so2pkappadef} is an abelian group,
for every $\kappa$ and prime $p$.
\end{prop}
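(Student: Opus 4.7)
The plan is to read off everything directly from the composition law in Eq.~\eqref{eq:compSamxi2}, which has already been verified for all parameter values $\sigma,\tau\in\QQ_p\cup\{\infty\}$, together with the one-to-one parametrisation established in Theorem~\ref{thm:SO2}. Associativity of composition is inherited for free from matrix multiplication (and $SO(2)_p^\kappa$ is a subset of $M_{2\times2}(\QQ_p)$ by definition), so we only need to check the remaining group axioms and commutativity.

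First, closure is immediate from Eq.~\eqref{eq:compSamxi2}: the product $\cR_\kappa(\sigma)\cR_\kappa(\tau)$ is again of the form $\cR_\kappa(\rho)$, with the convention $\rho=\infty$ when $1-\alpha_\kappa\sigma\tau=0$ (producing $-I=\cR_\kappa(\infty)$ by Remark \ref{oss:inftyinclusion}). The identity element is $\cR_\kappa(0)=I$, as already read off from Eq.~\eqref{eq:rotazgeneric2}. For inverses, applying Eq.~\eqref{eq:compSamxi2} with $\tau=-\sigma$ yields
\[
  \cR_\kappa(\sigma)\cR_\kappa(-\sigma)
     =\cR_\kappa\!\left(\tfrac{\sigma-\sigma}{1+\alpha_\kappa\sigma^2}\right)
     =\cR_\kappa(0)=I,
\]
so that $\cR_\kappa(\sigma)^{-1}=\cR_\kappa(-\sigma)$ (and in the boundary case $\sigma=\infty$ we use $\cR_\kappa(\infty)=-I$, which is its own inverse).

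Finally, commutativity is essentially a tautology: the right-hand side of Eq.~\eqref{eq:compSamxi2}, namely $(\sigma+\tau)/(1-\alpha_\kappa\sigma\tau)$, is manifestly symmetric under the exchange $\sigma\leftrightarrow\tau$, hence $\cR_\kappa(\sigma)\cR_\kappa(\tau)=\cR_\kappa(\tau)\cR_\kappa(\sigma)$ for all $\sigma,\tau\in\QQ_p\cup\{\infty\}$, and by Theorem \ref{thm:SO2} every element of $SO(2)_p^\kappa$ arises this way.

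There is really no obstacle beyond bookkeeping; the only mildly delicate point is making sure that the limiting values $\sigma$ or $\tau$ equal to $\infty$, and the ``pole'' case $1-\alpha_\kappa\sigma\tau=0$, are all compatible with the group axioms. But these have been settled in Remark~\ref{oss:inftyinclusion} and in the discussion following Eq.~\eqref{eq:compSamxi2}, so the proof reduces to citing those facts.
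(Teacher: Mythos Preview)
Your proof is correct and follows essentially the same approach as the paper: both derive the group axioms and commutativity directly from the composition law Eq.~\eqref{eq:compSamxi2}, noting in particular that $\cR_\kappa(\sigma)^{-1}=\cR_\kappa(-\sigma)$ for finite $\sigma$ and $\cR_\kappa(\infty)^{-1}=\cR_\kappa(\infty)$. Your version is simply more explicit in spelling out closure, identity, and the handling of the boundary cases, whereas the paper dispatches the whole thing in a sentence by pointing to Eq.~\eqref{eq:compSamxi2}.
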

\begin{proof}
This is straightforward to prove via Eq. \eqref{eq:compSamxi2}, and it is in
analogy with $SO(2)_{\RR}$, which is isomorphic to the group of addition
or real angles $\theta$ modulo $2\pi$, but reinterpreted through the half-angle
tangent.
Furthermore, $\cR_\kappa(\sigma)^{-1}=\cR_\kappa(-\sigma)$
for all $\sigma\in\bQ_p$ and $\cR_\kappa(\infty)^{-1}=\cR_\kappa(\infty)$.
\end{proof}

\textbf{Rotations parametrized by the $p$-adic projective line.}
The form of the parametrization \eqref{eq:rotazgeneric2} of two-dimensional rotations,
and their composition law \eqref{eq:compSamxi2}, and in particular the necessity to
add a point at infinity to the $p$-adic affine line, suggest a description
in terms of projective geometry.

As usual, we identify the projective line $P^1(\QQ_p)$ with the
equivalence classes $[s:t] = \QQ_p^*(s,t) \subset \QQ_p^2\setminus(0,0)$
of nonzero vectors under multiplication by (nonzero) scalars. Representatives are $[s:1]$ and $\infty \equiv [1:0]$, allowing us to think of
$P^1(\QQ_p) = \QQ_p \cup \{\infty\}$ as the $p$-adic line closed with
a point at infinity. The parameter in Eq. \eqref{eq:rotazgeneric2}
is $\sigma = s/t$, which is well-defined on each equivalence class.
We can write the rotations as
bona fide functions of the points $[s:t]$ of $P^1(\QQ_p)$
\beq
\label{eq:rotazgeneric2-proiett}
\cR_\kappa([s:t])=
\begin{pmatrix}
  \frac{t^2-\alpha_\kappa s^2}{t^2+\alpha_\kappa s^2}
    & -\frac{2\alpha_\kappa st}{t^2+\alpha_\kappa s^2} \\
  \frac{2 st}{t^2+\alpha_\kappa s^2}
    & \phantom{-}\frac{t^2-\alpha_\kappa s^2}{t^2+\alpha_\kappa s^2}
\end{pmatrix},
\eeq
where $\alpha_\kappa\in\{-v,p,\frac{p}{u}\}$ and $\alpha_k\in\{1,\pm2,\pm5,\pm10\}$
respectively for $\kappa=-v,p,up$ ($p$ odd) and $\kappa=1,\pm2,\pm5,\pm10$ ($p=2$).
Furthermore, the uniqueness of the parametrization \eqref{eq:rotazgeneric2-proiett}
as a function of $[s:t] \in P^1(\QQ_p)$ is inherited from Theorem \ref{thm:SO2}.

Looking at $\sigma$ as a ratio reveals the underlying structure, too, of the
composition law \eqref{eq:compSamxi2}, which in projective line coordinates
becomes
\beq
  \label{eq:compSamxi2-proiett}
  \cR_\kappa([s:t])\cR_\kappa([u:v])
      =\cR_\kappa\left([sv+tu:tv-\alpha_\kappa su]\right).
\eeq
Note that for each fixed $[s:t]$, this is a projective linear transformation
of $[u:v]$ (and vice versa). Indeed, the mapping
\begin{align*}
  T_\kappa: P^1(\QQ_p) &\longrightarrow PGL(2,\QQ_p) \nonumber\\
            [s:t]      &\longmapsto T_\kappa([s:t]) := \QQ_p^*
                                          \begin{pmatrix} t & -\alpha_\kappa s \\ s & t \end{pmatrix}
\end{align*}
defines abelian subgroups $\cT_\kappa := T_\kappa(P^1(\QQ_p))
\subset PGL(2,\QQ_p)$ (the invertible $2\times 2$-matrices over
$\QQ_p$ modulo scalars $\QQ_p^*$), each a rational image of the
projective line. $T_\kappa$ translates the composition law Eq.
\eqref{eq:compSamxi2-proiett} into a simple matrix multiplication
\begin{equation}\notag
  T_\kappa([s:t]) T_\kappa([u:v]) = T_\kappa([sv+tu:tv-\alpha_\kappa su]),
\end{equation}
This means that we can identify $SO(2)_p^\kappa$ with the subgroup
$\cT_\kappa \subset PGL(2,\QQ_p)$, $T_\kappa$ providing the isomorphism. If, by abuse of notation, we refer to $T_\kappa$ as the map from $P^1(\QQ_p)$ to its image $\cT_\kappa$, the isomorphic map from $\cT_\kappa$ to $SO(2)_p^\kappa$ is explicitly given by first applying $T_\kappa^{-1}:\QQ_p^*\begin{pmatrix} t & -\alpha_\kappa s \\ s & t \end{pmatrix}\mapsto [s,t]$ and then parameterizing an element of $SO(2)_p^\kappa$ as in Eq. \eqref{eq:rotazgeneric2-proiett}.

\begin{rem}\label{rem:omeopar}
Consider the quotient topologies on $P^1(\mathbb{Q}_p)$ and $PGL(2,\QQ_p)$, where $\mathbb{Q}_p^2\backslash(0,0)$ and $GL(2,\mathbb{Q}_p)$ are naturally endowed with their $p$-adic topology, and consider the topology of the $T_\kappa$ as subspaces of $PGL(2,\QQ_p)$. The map $T_\kappa$, from $P^1(\QQ_p)$ to its image $\cT_\kappa$, is clearly a homeomorphism, for every $\kappa$: it is continuous, as well as its inverse $T_\kappa^{-1}$, as it can be directly seen from their explicit expressions above. Now, let $SO(2)_p^\kappa$ be supplied with its natural $p$-adic topology. The parameterization Eq. \eqref{eq:rotazgeneric2-proiett} is continuous in $s$ and $t$, as its components are well defined rational functions. Hence it is continuous in $[s:t]$, since the parameterization does not depend on the representatives of the equivalence classes $[s:t]$. The inverse map, providing $[s:t]$ from such a parameterized matrix $\cR_\kappa([s:t])$, is given by 
\begin{equation}\notag
 \begin{pmatrix}
   a&b\\c&d
 \end{pmatrix}\coloneqq\begin{pmatrix}
  \frac{t^2-\alpha_\kappa s^2}{t^2+\alpha_\kappa s^2}
    & -\frac{2\alpha_\kappa st}{t^2+\alpha_\kappa s^2} \\
  \frac{2 st}{t^2+\alpha_\kappa s^2}
    & \phantom{-}\frac{t^2-\alpha_\kappa s^2}{t^2+\alpha_\kappa s^2}
\end{pmatrix}\mapsto [c:1+a]=\left[s\frac{2t}{t^2+\alpha_\kappa s^2}:t\frac{2t}{t^2+\alpha_\kappa s^2}\right]=[s:t]
\end{equation}
whenever $a\neq-1$, otherwise
\begin{equation}\notag
 \begin{pmatrix}
   a&b\\c&d
 \end{pmatrix}=\begin{pmatrix}
   -1&0\\0&-1
 \end{pmatrix}\mapsto [1:0].
 \end{equation}
This inverse map is again continuous, thus Eq. \eqref{eq:rotazgeneric2-proiett} provides a homeomorphism between $P^1(\mathbb{Q}_p)$ and $SO(2)_p^\kappa$, for every $\kappa$. It immediately follows that the parameterization \eqref{eq:rotazgeneric2} is a homeomorphism. We have also shown that the isomorphic map between $\cT_\kappa$ and $SO(2)_p^\kappa$ is a homeomorphism, for every $\kappa$.
\end{rem}

\section{Parametrization of rotations in $SO(3)_p$}
\label{gendir} Returning to $p$-adic three-space $\QQ_p^3$, we found
that every element of $SO(3)_p$ has a fixed axis of rotation. So,
now we consider a rotation around a general direction
$\boldsymbol{n}\in\bQ_p^3\setminus\boldsymbol{0}$. Let
$\boldsymbol{g},\boldsymbol{h}\in\bQ_p^3$ be vectors spanning
$V=\boldsymbol{n}^\perp$. Restricting our attention to $V$ means
reducing a vector $\boldsymbol{s}\in\bQ_p^3$, with coordinates
$(s_1,s_2,s_3)$ with respect to the basis
$(\boldsymbol{g},\boldsymbol{h},\boldsymbol{n})$, to a vector
$\boldsymbol{s}_{\left|V\right.}$ with coordinates $(s_1,s_2,0)$. If
$\boldsymbol{g}=(g_1,g_2,g_3)$, $\boldsymbol{h}=(h_1,h_2,h_3)$ and
$\boldsymbol{n}=(n_1,n_2,n_3)$ with respect to the canonical basis
$(\boldsymbol{e}_1,\,\boldsymbol{e}_2,\,\boldsymbol{e}_3)$ of
$\bQ_p^3$, then the canonical coordinates of $\boldsymbol{s}_{\vert
V}$ are given by \beq\notag
\begin{pmatrix}
w_1\\w_2\\w_3
\end{pmatrix} = \begin{pmatrix}
g_1&h_1&n_1\\g_2&h_2&n_2\\g_3&h_3&n_3
\end{pmatrix}\begin{pmatrix}
s_1\\s_2\\0
\end{pmatrix}
\eeq
or in short
\beq\notag
\boldsymbol{w}=M\boldsymbol{s}_{\left|V\right.}.
\eeq
The quadratic form preserved by $SO(3)_p$ is $Q_+(\boldsymbol{x})=\sum_{i=1}^3a_ix_i^2$
in the canonical basis where, $(a_i)=(1,-v,p)$ for $p\neq2$ and $(a_i)=(1,1,1)$ for $p=2$.
Its restriction to $V$ is
\begin{equation}\notag\begin{split}
Q_{+\vert V}(\boldsymbol{w}) &=\sum_{i=1}^3a_iw_i^2
                             =\sum_{i=1}^3a_i\left(g_is_1+h_is_2\right)^2\\
           &=\left(\sum_{i=1}^3a_ig_i^2\right)s_1^2
              +2\left(\sum_{i=1}^3a_ig_ih_i\right)s_1s_2
              +\left(\sum_{i=1}^3a_ih_i^2\right)s_2^2\\
           &=Q_+(\boldsymbol{g})s_1^2+2b(\boldsymbol{g},\boldsymbol{h}) s_1s_2+Q_+(\boldsymbol{h})s_2^2.
\end{split}\end{equation}
If $\boldsymbol{g}\perp\boldsymbol{h}$, then \beq \notag Q_{+\vert
V}(\boldsymbol{w}) = gs_1^2+hs_2^2 \eeq where $g=Q_+(\boldsymbol{q})$,
$h=Q_+(\boldsymbol{h})$.

We consider $\boldsymbol{t}\in\bQ_p^3$ with coordinates $(t_1,t_2,t_3)$ with respect to
$(\boldsymbol{g},\boldsymbol{h},\boldsymbol{n})$ such that
\beq\notag
\begin{pmatrix}
t_1\\t_2
\end{pmatrix} = \begin{pmatrix}
a&b\\c&d
\end{pmatrix}\begin{pmatrix}
s_1\\s_2
\end{pmatrix}
\eeq
and we want to find this matrix rotating $\boldsymbol{s}_{\left|V\right.}$ to
$\boldsymbol{t}_{\left|V\right.}$. If $\boldsymbol{u}:=M\boldsymbol{t}_{\left|V\right.}$, we must have
\beq\notag
Q_{+\vert V}(\boldsymbol{u}) = Q_{+\vert V}(\boldsymbol{w}).
\eeq
This implies
\beq\label{sysQ1}
\left\{\begin{aligned}
  ga^2+hc^2 &=g,\\
  gb^2+hd^2 &=h,\\
  gab+hcd   &=0,
\end{aligned}\right.
\eeq
which generalizes Eq. \eqref{eq:systso23} by setting
\beq\notag
\alpha:=\frac{h}{g}\in\bQ_p^\ast.
\eeq
Now we proceed as in the previous section, and introduce
\beq\notag
  \sigma=\pm\frac{c}{1+a}\in\bQ_p,
\eeq
leaving out the case $a\neq-1$ for the moment. This leads to
\beq\notag
  (1+\alpha\sigma^2)a=1-\alpha\sigma^2,
\eeq
and in order to extract $a$ we need the following results.

\begin{prop}
\label{prop:numerattt}
For any orthogonal basis $(\boldsymbol{g},\boldsymbol{h},\boldsymbol{n})$ of $\bQ_p^3$,
\beq\notag
Q_+(\boldsymbol{g})Q_+(\boldsymbol{h})Q_+(\boldsymbol{n})
  =\begin{cases} -vpt^2 &\text{ for } p \text{ odd}, \\
                    t^2 &\text{ for } p=2,
   \end{cases}
\eeq
for some $t\in\bQ_p^\ast$.
\end{prop}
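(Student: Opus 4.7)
The plan is to recognise the product $Q_+(\boldsymbol{g})Q_+(\boldsymbol{h})Q_+(\boldsymbol{n})$ as the determinant of the Gram matrix of $Q_+$ in the orthogonal basis $(\boldsymbol{g},\boldsymbol{h},\boldsymbol{n})$, and then exploit the fact (already used implicitly in Theorem \ref{teor:equivpQ}) that the discriminant of a quadratic form is a similarity invariant modulo squares in $\QQ_p^\ast$.

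First I would observe that, in the orthogonal basis $(\boldsymbol{g},\boldsymbol{h},\boldsymbol{n})$, the matrix representation of the symmetric bilinear form $b$ associated with $Q_+$ is diagonal, namely
$$A' = \mathrm{diag}\bigl(Q_+(\boldsymbol{g}),\,Q_+(\boldsymbol{h}),\,Q_+(\boldsymbol{n})\bigr),$$
so its determinant is exactly the product in question. Let $S\in GL_3(\QQ_p)$ be the change-of-basis matrix from the canonical basis to $(\boldsymbol{g},\boldsymbol{h},\boldsymbol{n})$, so that $A' = S^\top A S$ where $A$ is the canonical matrix of $Q_+$ recorded in Section \ref{sec:quadratic}. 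Taking determinants yields
$$Q_+(\boldsymbol{g})Q_+(\boldsymbol{h})Q_+(\boldsymbol{n}) \;=\; \det A' \;=\; (\det S)^2 \det A,$$
which shows that the product equals $\det A$ times a square in $\QQ_p^\ast$.

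It then suffices to read off $\det A$ from the explicit form of $Q_+$. For $p$ odd we have $A = \mathrm{diag}(1,-v,p)$, giving $\det A = -vp$; for $p=2$ we have $A = \mathrm{diag}(1,1,1)$, giving $\det A = 1$. Writing $\det S =: t$, this delivers precisely the two cases of the statement. Finally, because $Q_+$ is definite and $(\boldsymbol{g},\boldsymbol{h},\boldsymbol{n})$ is a basis, each of $Q_+(\boldsymbol{g}), Q_+(\boldsymbol{h}), Q_+(\boldsymbol{n})$ is nonzero, so the product is nonzero and therefore $t\in\QQ_p^\ast$ (equivalently, $S$ is invertible, so $\det S\neq 0$).

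There is no real obstacle in this argument, as it reduces to an elementary change-of-basis computation for a bilinear form combined with the squares-invariance of the determinant. The only point worth double-checking is that $Q_+$ being definite rules out a vanishing factor on the left-hand side, which is what forces $t$ to lie in $\QQ_p^\ast$ rather than merely in $\QQ_p$.
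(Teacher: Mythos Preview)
Your argument is correct and is essentially the same as the paper's: both recognise the product as the determinant of the Gram matrix of $Q_+$ in the given orthogonal basis, use the change-of-basis identity $A'=S^\top A S$ to conclude that the determinant changes by a square, and then read off $\det A$ from the canonical form of $Q_+$. Your version is slightly more explicit in identifying $t=\det S$ and in noting that definiteness of $Q_+$ forces $t\in\QQ_p^\ast$.
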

\begin{proof}
We recall that the determinant is an invariant of quadratic forms in
$K=\bQ_p^\ast/(\bQ_p^\ast)^2$ under change of basis.
Given bases $\mathcal{B}$ and $\mathcal{C}$ of $\bQ_p^3$, there exists a unique
linear transformation $L:\bQ_p^3\rightarrow\bQ_p^3$ mapping $\mathcal{B}$ to $\mathcal{C}$.
If a quadratic form over $\bQ_p$ has matrix representation $M$ with respect to $\mathcal{B}$,
it has a matrix representation $M'=LML^\top $ with respect to $\mathcal{C}$.
Now, $\det LML^\top  =(\det L)^2(\det M)$ implies $\det M\simeq\det M'$ in $K$.
Our form $Q_+$ is represented in the canonical basis
$\mathcal{B}=(\boldsymbol{e}_1,\boldsymbol{e}_2,\boldsymbol{e}_3)$
of $\bQ_p^3$ by $A=\text{diag}(1,-v,p)$ for $p\neq2$ and by $A= I$ for $p=2$.
We have $\det A=Q_+(\boldsymbol{e}_1)Q_+(\boldsymbol{e}_2)Q_+(\boldsymbol{e}_3)$, which
is equal to $-vp$ when $p\neq2$ and to $1$ when $p=2$.
On the other hand, $Q_+$ has diagonal matrix form with respect to any orthogonal
basis $\mathcal{C}=(\boldsymbol{g},\boldsymbol{h},\boldsymbol{n})$ of $\bQ_p^3$,
where its determinant is $Q_+(\boldsymbol{g})Q_+(\boldsymbol{h})Q_+(\boldsymbol{n})$. Hence,
$Q_+(\boldsymbol{g})Q_+(\boldsymbol{h})Q_+(\boldsymbol{n})\simeq -vp$ in $K$ for $p\neq2$,
and $Q_+(\boldsymbol{g})Q_+(\boldsymbol{h})Q_+(\boldsymbol{n})\simeq 1$ for $p=2$.
\end{proof}

\begin{cor}
\label{cor:forueedfapaval}
$-\alpha\in\bQ_p^\ast$ is never a square.
\end{cor}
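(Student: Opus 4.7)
The plan is to deduce this corollary directly from the definiteness of $Q_+$, restricted to the plane $V = \boldsymbol{n}^\perp$. Since $(\boldsymbol{g},\boldsymbol{h},\boldsymbol{n})$ is an orthogonal basis, the form $Q_+$ diagonalises on $V$ in the basis $(\boldsymbol{g},\boldsymbol{h})$ as
\[
  Q_{+\vert V}(s_1\boldsymbol{g}+s_2\boldsymbol{h}) = g s_1^2 + h s_2^2,
\]
with $g = Q_+(\boldsymbol{g})$ and $h = Q_+(\boldsymbol{h})$, both in $\QQ_p^\ast$. Because $V\subset\QQ_p^3$ and $Q_+$ is definite on the ambient space (as established in Section \ref{sec:quadratic}), the restriction $Q_{+\vert V}$ is also anisotropic on $V$.

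I would then argue by contrapositive. Suppose $-\alpha = -h/g$ were a square in $\QQ_p^\ast$, say $-\alpha = \mu^2$ with $\mu\in\QQ_p^\ast$. Then $h = -\mu^2 g$, so choosing $(s_1,s_2) = (\mu,1)$ gives
\[
  Q_{+\vert V}(\mu\boldsymbol{g}+\boldsymbol{h}) = g\mu^2 + h = g\mu^2 - g\mu^2 = 0,
\]
while $\mu\boldsymbol{g}+\boldsymbol{h}\neq\boldsymbol{0}$ by linear independence of $\boldsymbol{g}$ and $\boldsymbol{h}$. This contradicts the anisotropy of $Q_{+\vert V}$, hence $-\alpha$ cannot be a square in $\QQ_p^\ast$.

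Equivalently, one could invoke Theorem \ref{teor:Qrepns0} in the case $n=2$: the binary form $gs_1^2 + hs_2^2$ represents zero nontrivially if and only if its discriminant $gh$ is equivalent to $-1$ in $K = \QQ_p^\ast/(\QQ_p^\ast)^2$; since $-\alpha = -h/g$ and $-gh$ differ only by the square factor $g^2$, the non-representation of zero translates verbatim into $-\alpha\not\simeq 1$ in $K$, i.e. $-\alpha$ is not a square. No real obstacle is expected — the corollary is an essentially immediate consequence of the definiteness of $Q_+$, and Proposition \ref{prop:numerattt} is not even needed in the argument.
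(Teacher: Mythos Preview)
Your argument is correct, and it is genuinely different from the paper's. You use only the anisotropy of the three-dimensional form $Q_+$, restricted to $V=\boldsymbol{n}^\perp$: if $-\alpha$ were a square, the binary form $gs_1^2+hs_2^2$ would visibly represent zero, contradicting definiteness. This is the most economical route to the bare statement.

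The paper instead invokes Proposition~\ref{prop:numerattt} to express $-\alpha$ modulo squares in terms of $Q_+(\boldsymbol{n})$, namely $-\alpha\simeq vpQ_+(\boldsymbol{n})$ for $p$ odd and $-\alpha\simeq -Q_+(\boldsymbol{n})$ for $p=2$, and then observes that $-\alpha$ being a square would force the four-dimensional definite form $Q_+^{(4)}$ to represent zero nontrivially. This is a longer detour, but it is not idle: the relation $\alpha\simeq -vpQ_+(\boldsymbol{n})$ (resp.\ $\alpha\simeq Q_+(\boldsymbol{n})$) is recorded as Eq.~\eqref{eq:invforrelb} and used later to set up the bijection between the square-class of the axis $\boldsymbol{n}$ and the equivalence class of the restricted form $Q_{+\vert\boldsymbol{n}^\perp}$. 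So your proof buys simplicity, while the paper's buys a formula it needs downstream; your remark that Proposition~\ref{prop:numerattt} is unnecessary is accurate for the corollary in isolation, but the proposition is doing structural work elsewhere.
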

\begin{proof}
From Proposition \ref{prop:numerattt}, we have
\beq\notag
Q_+(\boldsymbol{g})Q_+(\boldsymbol{h})
 \simeq \begin{cases} -vpQ_+(\boldsymbol{n}) &\text{ if } p \text{ odd},\\
                         Q_+(\boldsymbol{n}) &\text{ if } p=2,
        \end{cases}
\eeq
which gives
\beq\notag
-\alpha=-\frac{Q_+(\boldsymbol{h})}{Q_+(\boldsymbol{g})}\simeq -Q_+(\boldsymbol{g})Q_+(\boldsymbol{h})\simeq\left\{\begin{aligned}
  vpQ_+(\boldsymbol{n}),&\ p\neq2,\\
   -Q_+(\boldsymbol{n}),&\ p=2.
\end{aligned}\right.
\eeq
Accordingly, $-\alpha$ is a square if and only if $Q_+^{(4)}(\boldsymbol{n},s)=0$ for some $s\in\bQ_p^\ast$.
However $Q_+^{(4)}$ does not represent $0$.
\end{proof}

This implies $1+\alpha\sigma^2\neq0$ for all $\alpha\in\bQ_p^\ast,\sigma\in\bQ_p$, and we get
\beq\notag
  a=\frac{1-\alpha\sigma^2}{1+\alpha\sigma^2},
\eeq and, therefore, \beq \label{eq:rotazparam2x2}
\cR_{\boldsymbol{n}}(\sigma)_{\vert\boldsymbol{n}^\perp}=\begin{pmatrix}
a(\sigma)&b(\sigma)\\c(\sigma)&a(\sigma)
\end{pmatrix}=\begin{pmatrix}
\frac{1-\alpha\sigma^2}{1+\alpha\sigma^2}&- \frac{2\alpha\sigma}{1+\alpha\sigma^2}\\  \frac{2\sigma}{1+\alpha\sigma^2}&\frac{1-\alpha\sigma^2}{1+\alpha\sigma^2}
\end{pmatrix}.
\eeq
We need to consider $\sigma\in\bQ_p\cup\{\infty\}$ as in Remark
\ref{oss:inftyinclusion} in order to include the solution to Eq. \eqref{sysQ1}
with $a=-1$, which is
\beq\notag
  \cR_{\boldsymbol{n}}(\infty)_{\vert\boldsymbol{n}^\perp} = -I.
\eeq

We extend the result of Eq. \eqref{eq:rotazparam2x2} to the whole of $\bQ_p^3$:
\beq\notag
\begin{pmatrix}
t_1\\t_2\\t_3
\end{pmatrix}=\begin{pmatrix}
a(\sigma)&b(\sigma)&0\\c(\sigma)&a(\sigma)&0\\0&0&1
\end{pmatrix}\begin{pmatrix}
s_1\\s_2\\s_3
\end{pmatrix}
\eeq
choosing $t_3=s_3$, or in short
\beq
\label{eq:relationrot}
\boldsymbol{t}=\cR_{\boldsymbol{n}}(\sigma)\boldsymbol{s}.
\eeq
This concludes the proof of the following theorem.

\begin{theor}
A rotation of $SO(3)_p$ around $\boldsymbol{n}$ takes the following matrix form with respect to an orthogonal basis $(\boldsymbol{g},\boldsymbol{h},\boldsymbol{n})$ of $\bQ_p^3$:
\beq\label{eq:rotazgeneric}
\cR_{\boldsymbol{n}}(\sigma)=\begin{pmatrix}
\frac{1-\alpha\sigma^2}{1+\alpha\sigma^2} & -\frac{2\alpha\sigma}{1+\alpha\sigma^2}&0\\ \frac{2\sigma}{1+\alpha\sigma^2} & \frac{1-\alpha\sigma^2}{1+\alpha\sigma^2}&0\\0&0&1
\end{pmatrix}
\eeq
with $\sigma\in\bQ_p\cup\{\infty\}$ and $\alpha=Q_+(\boldsymbol{h})/Q_+(\boldsymbol{g})$,
for every prime $p$.
{\hfill\qed}
\end{theor}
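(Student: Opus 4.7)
The plan is to reduce the three-dimensional problem to the two-dimensional one of Section \ref{sec:planar}, by restricting the rotation to the invariant plane $V=\boldsymbol{n}^\perp$.

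First, I would observe that any $\cR\in SO(3)_p$ fixing the line $\QQ_p\boldsymbol{n}$ must preserve the orthogonal complement $V$, and acts on it as a special orthogonal transformation for the restricted form $Q_{+\vert V}$. Writing a vector of $V$ in coordinates $(s_1,s_2)$ with respect to the orthogonal basis $(\boldsymbol{g},\boldsymbol{h})$, the restricted form is diagonal, $Q_{+\vert V}=g s_1^2+h s_2^2$, where $g=Q_+(\boldsymbol{g})$ and $h=Q_+(\boldsymbol{h})$. Expressing the $2\times 2$ restriction of $\cR$ as $\begin{pmatrix}a&b\\ c&d\end{pmatrix}$, invariance of $Q_{+\vert V}$ yields the system \eqref{sysQ1}, which has exactly the shape of \eqref{eq:systso23} once we set $\alpha=h/g$. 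The algebraic derivation of Theorem \ref{thm:SO2} then applies verbatim: introduce $\sigma=c/(1+a)$ (with $a=-1$ treated separately via $\sigma=\infty$), solve for $a,b,c,d$ as rational functions of $\sigma$, and impose $\det\cR=1$ to fix the relative signs, arriving at the $2\times 2$ block that appears in \eqref{eq:rotazgeneric}.

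The main obstacle I anticipate is establishing that the denominator $1+\alpha\sigma^2$ never vanishes for $\sigma\in\QQ_p$, equivalently that $-\alpha$ is not a square in $\QQ_p^\ast$. In the planar case this was built into the classification of Proposition \ref{prop:quadform2}, where $\alpha_\kappa$ ranged over a finite list of explicit representatives; here however $\alpha=h/g$ depends on the axis $\boldsymbol{n}$ and the chosen orthogonal basis, and so a direct appeal to the $2$-dimensional classification is not available. The way through is Corollary \ref{cor:forueedfapaval}: via Proposition \ref{prop:numerattt}, the determinant of $Q_+$ in any orthogonal basis is fixed in $K$, so $Q_+(\boldsymbol{g})Q_+(\boldsymbol{h})Q_+(\boldsymbol{n})\simeq -vp$ for odd $p$ (and $\simeq 1$ for $p=2$), which forces $-\alpha$ to lie in the same class as a scalar multiple of $Q_+(\boldsymbol{n})$; if $-\alpha$ were a square, then $Q_+^{(4)}$ would acquire a nontrivial isotropic vector involving $\boldsymbol{n}$, contradicting the fact that the unique definite four-dimensional form does not represent zero.

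Finally, I would extend the $2\times 2$ block to a $3\times 3$ matrix by requiring $\cR\boldsymbol{n}=\boldsymbol{n}$, giving the third row and column $(0,0,1)$ in the $(\boldsymbol{g},\boldsymbol{h},\boldsymbol{n})$-basis, which produces \eqref{eq:rotazgeneric}. The leftover case $a=-1$ (not covered by any finite $\sigma$) is recovered by compactifying the parameter space and setting $\cR_{\boldsymbol{n}}(\infty)$ to the diagonal matrix $\mathrm{diag}(-1,-1,1)$, in accordance with Remark \ref{oss:inftyinclusion}. Uniqueness of $\sigma\in\QQ_p\cup\{\infty\}$ is inherited from the one-to-one planar parametrisation of Theorem \ref{thm:SO2}, completing the proof.
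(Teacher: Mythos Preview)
Your proposal is correct and follows essentially the same route as the paper: restrict to the plane $V=\boldsymbol{n}^\perp$, obtain the diagonal form $gs_1^2+hs_2^2$ and the system \eqref{sysQ1} with $\alpha=h/g$, invoke Proposition~\ref{prop:numerattt} and Corollary~\ref{cor:forueedfapaval} to ensure $1+\alpha\sigma^2\neq 0$, solve as in the planar case, and extend by $\cR\boldsymbol{n}=\boldsymbol{n}$ with the $\sigma=\infty$ case handled separately. Your identification of the non-vanishing of $1+\alpha\sigma^2$ as the one genuinely new point beyond the two-dimensional argument matches the paper's emphasis exactly.
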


Eq. \eqref{eq:relationrot} can be written with respect to the canonical basis
$(\boldsymbol{e}_1,\boldsymbol{e}_2,\,\boldsymbol{e}_3)$ as
\beq\notag
  \boldsymbol{u}=M\cR_{\boldsymbol{n}}(\sigma)M^{-1}\boldsymbol{w},
\eeq
so that $M\cR_{\boldsymbol{n}}(\sigma)M^{-1}$ is the rotation matrix on $\bQ_p^3$ around $\boldsymbol{n}$ with respect to $(\boldsymbol{e}_1,\boldsymbol{e}_2,\,\boldsymbol{e}_3)$.

\begin{rem}
\label{remark:alpaherassiref}
Rotations around the reference axes of $\bQ_p^3$ are particular cases of Eq. \eqref{eq:rotazgeneric},
in which we choose the canonical basis as $(\boldsymbol{g},\boldsymbol{h},\boldsymbol{n})$
in some order. When $p\neq2$, if $(\boldsymbol{g},\boldsymbol{h},\boldsymbol{n})$ equals
\begin{itemize}
    \item $(\boldsymbol{e}_1,\boldsymbol{e}_2,\boldsymbol{e}_3)$, then $g=1,\ h=-v \Rightarrow \alpha=-v$,
    which determine a rotation around the third reference axis, $\cR_z(\sigma)$, characterized by
    \beq\notag
\begin{pmatrix}
\frac{1+v\sigma^2}{1-v\sigma^2} &
\frac{2v\sigma}{1-v\sigma^2}\\
\frac{2\sigma}{1-v\sigma^2} & \frac{1+v\sigma^2}{1-v\sigma^2}
\end{pmatrix};
\eeq
    \item $(\boldsymbol{e}_1,\boldsymbol{e}_3,\boldsymbol{e}_2)$, then $g=1,\ h=p\Rightarrow \alpha=p$,
    which determine a rotation around the second reference axis of the canonical basis,
    $\cR_y(\sigma)$, characterized by
    \beq\notag
\begin{pmatrix}
\frac{1-p\sigma^2}{1+p\sigma^2} &
-\frac{2p\sigma}{1+p\sigma^2}\\
\frac{2\sigma}{1+p\sigma^2} & \frac{1-p\sigma^2}{1+p\sigma^2}
\end{pmatrix};
\eeq
    \item $(\boldsymbol{e}_2,\boldsymbol{e}_3,\boldsymbol{e}_1)$,
    then $g=-v,\ h=p\Rightarrow \alpha=-\frac{p}{v}$, which determine
    a rotation around the first reference axis, $\cR_x(\sigma)$, characterized by
    \beq\notag
\begin{pmatrix}
\frac{1+\frac{p}{v}\sigma^2}{1-\frac{p}{v}\sigma^2} &
\frac{2\frac{p}{v}\sigma}{1-\frac{p}{v}\sigma^2}\\
\frac{2\sigma}{1-\frac{p}{v}\sigma^2} & \frac{1+\frac{p}{v}\sigma^2}{1-\frac{p}{v}\sigma^2}
\end{pmatrix}.
\eeq
\end{itemize}
These matrices are associated respectively to the conservation of the rank-2 quadratic forms $Q_{-v},\,Q_p$ and $-vy^2+pz^2$. The latter is $Q_{up}$ if $p\equiv1\mod4$, but it is again $Q_p$ for $p\equiv3\mod4$, in which case $\cR_x$ and $\cR_y$ are characterized by the same $2\times2$ submatrix, only differently positioned inside the $3\times3$ matrix on the canonical basis.

When $p=2$ we have $Q_+(\boldsymbol{e}_i)=1,\ i=1,2,3$, which gives
$\alpha=1$ whenever $(\boldsymbol{g},\boldsymbol{h},\boldsymbol{n})$
equals any permutation of the canonical basis. Then, a $2$-adic
rotation around any reference axis of $\bQ_2^3$ is characterized by
\beq\notag
\begin{pmatrix}
\frac{1-\sigma^2}{1+\sigma^2}&-\frac{2\sigma}{1+\sigma^2}\\ \frac{2\sigma}{1+\sigma^2}&\frac{1-\sigma^2}{1+\sigma^2}
\end{pmatrix}
\eeq
associated to the conservation of the rank-2 quadratic form $Q_1$.
\end{rem}
Eq. \eqref{eq:switchsign} translates in three dimensions into
\beq \label{eq:switchsign3}
\cR_{\boldsymbol{n}}\left(-\frac{1}{\alpha\sigma}\right)=\cR_{\boldsymbol{n}}\left(\infty\right)\cR_{\boldsymbol{n}}\left(\sigma\right).
\eeq

The important question we have to face now is, which of the three
(seven) equivalence classes of definite quadratic forms in dimension
two for odd $p$ ($p=2$), according to Proposition
\ref{prop:quadform2}, arise from restricting $Q_+$ to two-dimensional
subspaces $V = \boldsymbol{n}^\perp$ of $\QQ_p^3$. It will turn out
that all of them are realized. Since the transformations
$\cR_{\boldsymbol{n}}\in SO(3)_p$ are rotations around an axis
$\boldsymbol{n}\in\bQ_p^3$ preserving the rank-$3$ quadratic form
$Q_+$, this is equivalent to asking if the restrictions
$(\cR_{\boldsymbol{n}})_{\vert\boldsymbol{n}^\perp}$, by varying the
classes of $\boldsymbol{n}$, cover all the classes of special
orthogonal symmetries of the $p$-adic plane preserving the definite
rank-$2$ forms (recall that Theorem \ref{thm:rotation} identifies
all elements of $SO(3)_p$ as such rotations around axes).

Consider a basis change between orthogonal bases of $\bQ_p^3$ of the kind
$(\boldsymbol{g},\boldsymbol{h},\boldsymbol{n})
\mapsto (\lambda\boldsymbol{g},\mu\boldsymbol{h},\nu\boldsymbol{n})$, with
$\lambda,\mu,\nu\in\bQ_p^\ast$.
The parameter $\alpha$ in the parameterization \eqref{eq:rotazgeneric} is affected
by this transformation as
\beq\notag
  \alpha = \frac{Q_+(\boldsymbol{h})}{Q_+(\boldsymbol{g})}
        \mapsto \frac{Q_+(\mu\boldsymbol{h})}{Q_+(\lambda\boldsymbol{g})}
        =\left(\frac{\mu}{\lambda}\right)^2\frac{Q_+(\boldsymbol{h})}{Q_+(\boldsymbol{g})}.
\eeq
Hence, $\alpha$ is well-defined in $K=\bQ_p^\ast/(\bQ_p^\ast)^2$; there are at most
$4$ classes of rotations in $SO(3)_p$ for $p\neq2$ and $8$ classes for $p=2$.

In Corollary \ref{cor:forueedfapaval} we found a bijective relation
modulo squares between the value of the form $Q_+$ on a rotation axis
$\boldsymbol{n}$ and $\alpha$ \beq \label{eq:invforrelb}
\alpha\simeq \begin{cases}
               -vpQ_+(\boldsymbol{n}) &\text{ for } p \text{ odd},\\
                  Q_+(\boldsymbol{n}) &\text{ for } p=2.
             \end{cases}
\eeq
This allows us to determine the class of a rotation realized on a plane via
the class $Q_+(\boldsymbol{n})$ of its orthogonal axis.

As a consequence of the fact that $Q_+^{(4)}$ does not
represent $0$, Corollary \ref{cor:forueedfapaval} showed that $\alpha\not\simeq-1$.
Hence there are at most $3$ (or $7$) classes of rotations for odd $p$
(for $p=2$, respectively), depending on the equivalence class modulo squares of $\alpha$:
\beq\notag \begin{aligned}
&\alpha\in\{-v,p,up\},\ p>2,\\
&\alpha\in\{1,\pm2,\pm5,\pm10\}, \ p=2.
\end{aligned}
\eeq
Focusing on the language of quadratic forms, there are left at most $3$ (or $7$)
equivalence classes up to scaling of restrictions of $Q_+$ to a plane, identified
by the equivalence class modulo squares of $\alpha$.

We want to see if there exist classes of $\boldsymbol{n}$ in $\bQ_p^3$ realizing
all these classes of $\alpha$, uniquely associated to the definite rank-$2$
forms through their determinant. We make use of Eq. \eqref{eq:invforrelb}.

When $p$ is odd, $Q_+(\boldsymbol{n})\simeq -vp\alpha\Leftrightarrow
n_1^2-vn_2^2+pn_3^2+vp\alpha s^2=0$ for some $s\in\bQ_p^\ast$,
$\boldsymbol{n}$ of canonical coordinates $(n_1,n_2,n_3)$. The
quadratic form $n_1^2-vn_2^2+pn_3^2+vp\alpha s^2$ represents $0$ in
a nontrivial way for every $\alpha\in\{-v,p,up\}$, for its
determinant is $d\simeq-\alpha\not\simeq 1$ (Theorem
\ref{teor:Qrepns0}). If $n_1=n_2=n_3=0$, then $vp\alpha
s^2=0,\,s\in\bQ_p^\ast$ is impossible. It means that
$Q_+(\boldsymbol{n})\simeq-vp\alpha$ admits nontrivial solutions
$\boldsymbol{n}$ for every $\alpha$.

Similarly, when $p=2$,
$Q_+(\boldsymbol{n})\simeq\alpha\Leftrightarrow n_1^2+n_2^2+n_3^2-\alpha s^2=0$ for
some $s\in\bQ_p^\ast$. The form $x^2+y^2+z^2-\alpha s^2$ represents $0$ for every
$\alpha\in\{1,\pm2,\pm5,\pm10\}$, because its determinant is $d\simeq-\alpha\not\simeq 1$.
In summary, we have proved the following.

\begin{prop}
For every prime $p$ and every two-dimensional definite quadratic form $Q_\kappa$,
there exists a rotation axis determined by $\boldsymbol{n}\in\bQ_p^3\setminus\boldsymbol{0}$
such that $Q_{+\vert\boldsymbol{n}^\perp}\sim Q_\kappa$.
{\hfill\qed}
\end{prop}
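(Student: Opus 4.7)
The plan is to reduce the statement, via Corollary \ref{cor:forueedfapaval} and the parameterisation of restrictions by the class $[\alpha] \in K = \bQ_p^\ast/(\bQ_p^\ast)^2$, to showing that each admissible class of $\alpha$ is actually realised by some $Q_+(\boldsymbol{n})$. By the preceding discussion, the restriction $Q_{+|\boldsymbol{n}^\perp}$ up to equivalence is captured by the class $[\alpha] = [Q_+(\boldsymbol{h})/Q_+(\boldsymbol{g})] \in K$, and this class is in bijection with $[Q_+(\boldsymbol{n})]$ through $\alpha \simeq -vp\,Q_+(\boldsymbol{n})$ for odd $p$ (respectively $\alpha \simeq Q_+(\boldsymbol{n})$ for $p=2$), via Eq.~\eqref{eq:invforrelb}. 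Thus it suffices to prove that, for each of the three (resp.\ seven) target classes of $\alpha$ listed in Proposition~\ref{prop:quadform2}, a nonzero $\boldsymbol{n}\in\bQ_p^3$ with the corresponding value of $Q_+(\boldsymbol{n})$ in $K$ exists.

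First, I would fix a target class $[c] \in K$ and pose the existence problem as the existence of a nontrivial isotropic vector for the rank-$4$ form $Q_+(\boldsymbol{x}) - c\,x_4^2$ on $\bQ_p^4$: if $(\boldsymbol{n},s)$ is such a vector, then necessarily $s \neq 0$ (because the definite form $Q_+$ does not represent $0$ nontrivially), so dividing by $s^2$ yields $Q_+(\boldsymbol{n}/s) = c$, exhibiting a vector with $Q_+$ in the desired class. Next, Theorem~\ref{teor:Qrepns0} tells us that a rank-$4$ form on $\bQ_p^4$ represents $0$ nontrivially precisely when it is \emph{not} equivalent to the unique definite quaternary form $Q_+^{(4)}$; equivalently, when either $d \not\simeq 1$, or $d \simeq 1$ together with $\varepsilon = (-1,-1)$.

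Finally, I would verify that for each admissible $[\alpha]\neq[-1]$ the discriminant of $Q_+(\boldsymbol{x}) - c\,x_4^2$, which is $\det(Q_+)\cdot(-c)$, does not lie in the class $[1]\in K$. This is a short bookkeeping computation modulo $(\bQ_p^\ast)^2$ in each of the cases for $p$: the single value of $c$ that would make $d \simeq 1$ corresponds, through Eq.~\eqref{eq:invforrelb}, exactly to the excluded class $\alpha\simeq -1$ (the same obstruction that ruled it out via Corollary~\ref{cor:forueedfapaval}, since the rank-$4$ form that fails to represent $0$ is precisely the one dual to the unique obstruction there). The main (and essentially only) obstacle is to keep track of the square classes as $p$ varies among the cases $p \equiv 1 \bmod 4$, $p \equiv 3 \bmod 4$, and $p = 2$; but this is routine from the explicit description of $K$ recalled in Section~\ref{sec:quadratic}. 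Once these discriminant checks are in hand, each definite $Q_\kappa$ is exhibited as $Q_{+|\boldsymbol{n}^\perp}$ for an appropriate $\boldsymbol{n}$, completing the proof.
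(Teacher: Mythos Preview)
Your proposal is correct and follows essentially the same approach as the paper: both reduce via Eq.~\eqref{eq:invforrelb} to realising each admissible square class by $Q_+(\boldsymbol{n})$, form the rank-$4$ quadratic form $Q_+(\boldsymbol{x})-c\,x_4^2$, compute its discriminant as $\simeq -\alpha$, and invoke Theorem~\ref{teor:Qrepns0} together with $\alpha\not\simeq -1$ to obtain a nontrivial isotropic vector. The only cosmetic difference is that the paper checks $\boldsymbol{n}\neq 0$ directly while you deduce $s\neq 0$ from the definiteness of $Q_+$; both arguments are valid and equivalent.
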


In fact, when $p$ is odd,
\begin{itemize}
  \item $Q_+(\boldsymbol{e}_3)=p\ \Leftrightarrow\ \alpha\simeq-v$, as it can be seen from
     Remark \ref{remark:alpaherassiref} too. This is associated to the conservation of $Q_{-v}$;
  \item $Q_+(\boldsymbol{e}_2)=-v\ \Leftrightarrow\ \alpha\simeq p$, associated to $Q_p$;
  \item $Q_+(\boldsymbol{e}_1)=1\ \Leftrightarrow\ \alpha\simeq -\frac{p}{v}$.
     This is associated to $Q_{up}$ for $p\equiv1\mod4$ and $Q_p$ for $p\equiv3\mod4$.
\end{itemize}
On the other hand, when $p=2$, then $Q_+(\boldsymbol{e}_i)=1$ for all $i=1,2,3$, thus
$\alpha=1$, associated to the conservation of $Q_1$.

This means that when $p\equiv1\mod4$ the canonical basis is enough to realize all the
classes of rank-$2$ forms; however it leaves out the class associated to $\alpha\simeq up$
for $p\equiv3\mod4$, and it realizes only one class ($\alpha\simeq1$) out of the seven ones
for $p=2$. These remaining classes are covered by non-reference axes.
Indeed, $Q_+$ maintains some symmetry on $x,y$ for $p\equiv3\mod4$, contrary to
$p\equiv1\mod4$, and the symmetry is total for $p=2$.

The axes needed to realize all the classes of definite rank-$2$ forms when
$p\equiv3\mod4$ do not form an orthogonal basis. Moreover, in the case of $p=2$, we
need seven axes, so we cannot have a basis of $\bQ_2^3$ to realize all classes of
rank-$2$ forms.

Furthermore, the map $\boldsymbol{n}\mapsto
Q_{+\vert\boldsymbol{n}^\perp}$ from vectors $\boldsymbol{n}$ in
$\bQ_p^3\setminus\boldsymbol{0}$ to definite quadratic forms in
dimension two induces bijective correspondence between the
equivalence classes of $Q_+(\boldsymbol{n})$ in $K$ and the rank-two
definite forms $Q_\kappa$ up to equivalence. In fact, if
$\boldsymbol{n},\boldsymbol{n}'\in\bQ_p^3\setminus\boldsymbol{0}$
are such that $Q_+(\boldsymbol{n})\not\simeq Q_+(\boldsymbol{n}'),$ then
$Q_{+\vert\boldsymbol{n}^\perp}\not\simeq
Q_{+\vert{\boldsymbol{n}'}^\perp}$. This is because, as in the proof
of Proposition \ref{prop:numerattt}, the determinant is an invariant
of quadratic forms under basis changes, and $\det Q_+ \simeq
Q_+(\boldsymbol{g})Q_+(\boldsymbol{h})Q_+(\boldsymbol{n})
  \simeq Q_+(\boldsymbol{g}')Q_+(\boldsymbol{h}')Q_+(\boldsymbol{n}')$,
respectively on the orthogonal bases
$(\boldsymbol{g},\boldsymbol{h},\boldsymbol{n})$ and
$(\boldsymbol{g}',\boldsymbol{h}',\boldsymbol{n}')$.
As a consequence, if $Q_+(\boldsymbol{n})\not\simeq Q_+(\boldsymbol{n}')$, then
$Q_+(\boldsymbol{g})Q_+(\boldsymbol{h})\not\simeq Q_+(\boldsymbol{g}')Q_+(\boldsymbol{h}')$,
where
$Q_+(\boldsymbol{g})Q_+(\boldsymbol{h})
  \simeq\det Q_{+\vert\boldsymbol{n}^\perp} $,
$Q_+(\boldsymbol{g}')Q_+(\boldsymbol{h}')
  \simeq\det Q_{+\vert{\boldsymbol{n}'}^\perp}$.
Two quadratic forms $Q_{+\vert\boldsymbol{n}^\perp}$
and $Q_{+\vert{\boldsymbol{n}'}^\perp}$ with different determinants modulo
squares cannot be equivalent (Theorem \ref{teor:equivpQ}).

It is possible to choose representatives in the three (resp. seven) classes of
definite rank-$2$ forms for odd $p$ (reps. $p=2$) in such a way that if
$\boldsymbol{n},\boldsymbol{n}'\in\bQ_p^3$ satisfy $Q_+(\boldsymbol{n})\simeq Q_+(\boldsymbol{n}')$,
then $Q_{+\vert\boldsymbol{n}^\perp}\simeq Q_{+\vert{\boldsymbol{n}'}^\perp}$
by a linear transformation (scaling is not involved). This can be shown in
an abstract way, without resorting to the explicit classification of $p$-adic
quadratic forms. The statement is trivial if $\boldsymbol{n}=\lambda\boldsymbol{n}'$
for some $\lambda\in\bQ_p^\ast$. Otherwise, consider a vector $\boldsymbol{v}\in\bQ_p^3$
orthogonal to $\boldsymbol{n},\boldsymbol{n}'$. By Proposition \ref{prop:exrotiff},
there exists some $\cR_{\boldsymbol{v}}(\sigma)\in SO(3)_p$ such that
$\cR_{\boldsymbol{v}}(\sigma)\boldsymbol{n}=\lambda\boldsymbol{n}'$ with
$\lambda\in\bQ_p^\ast$.
Then, $\cR_{\boldsymbol{v}}(\sigma)$ transforms the plane $\boldsymbol{n}^\perp$
to the plane ${\boldsymbol{n}'}^\perp$. This is a linear map, which necessarily
implements the equivalence $Q_{+\vert\boldsymbol{n}^\perp}\simeq Q_{+\vert{\boldsymbol{n}'}^\perp}$

\section{$p$-adic Cardano and Euler decompositions}
\label{sec:CardEul2}
The previous development of $SO(3)_p$ showed a sufficiently close analogy
to the real Euclidean $SO(3)_{\RR}$, in that the three-dimensional space
has an essentially unique definite quadratic form, and all special orthogonal
transformations are actually rotations. The rotation groups around fixed axes
themselves turned out to be rather more complex: there are three (for odd $p$)
and seven (for $p=2$) types of rotation axes with associated different
rotation groups.

Now we ask if it is possible to express any element of $SO(3)_p$ as a composition
of rotations around the reference axes of $\bQ_p^3$, as it happens in $\RR^3$
according to Theorem \ref{teor:EulercompRot}.
In the following we will revert to calling the axes $x$, $y$ and $z$,
to make them typographically more distinguishable.

\begin{theor}[$p$-adic Cardano decomposition]
\label{teor:cardanoSO3p}
For every odd prime $p$, any $M\in SO(3)_p$ can be decomposed into
\beq\notag
  M=\cR_z(\zeta)\cR_y(\eta)\cR_x(\xi),
\eeq
for some parameters $\xi,\eta,\zeta\in\bQ_p\cup\{\infty\}$.
\end{theor}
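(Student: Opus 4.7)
The plan is a column-reduction argument closely analogous to the real Euclidean case. Given $M\in SO(3)_p$, I construct $\zeta$, $\eta$, $\xi$ in turn so that successively left-multiplying $M$ by $\cR_z(-\zeta)$ and then $\cR_y(-\eta)$ reduces its first column to $\boldsymbol{e}_1$, at which point the remaining factor is forced to be a rotation around $\boldsymbol{e}_1$, i.e., of the form $\cR_x(\xi)$. Inverting the identity $\cR_y(-\eta)\cR_z(-\zeta)M=\cR_x(\xi)$ and using $\cR_{\bullet}(-\sigma)=\cR_{\bullet}(\sigma)^{-1}$ (Proposition~\ref{prop:subAXISgroup2}) then yields $M=\cR_z(\zeta)\cR_y(\eta)\cR_x(\xi)$.

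Denote $M\boldsymbol{e}_1=(m_{11},m_{21},m_{31})^\top$. For the first reduction I want $\zeta$ such that $\cR_z(-\zeta)(m_{11},m_{21},0)^\top=(a,0,0)^\top$ for some $a$. By Proposition~\ref{prop:exrotiff}, such a rotation around $\boldsymbol{e}_3$ exists iff $Q_+(m_{11},m_{21},0)=a^2$ for some $a\in\QQ_p^\ast$, i.e., iff $m_{11}^2-vm_{21}^2$ is a non-zero square in $\QQ_p$. The orthogonality $M^\top AM=A$ applied to the first column gives
\beq\notag
  m_{11}^2-vm_{21}^2 \;=\; 1-p\,m_{31}^2,
\eeq
and Theorem~\ref{thm:compactness} guarantees $m_{31}\in\ZZ_p$, whence $1-p\,m_{31}^2\in 1+p\ZZ_p$. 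Hensel's lemma applied to $f(x)=x^2-(1-p\,m_{31}^2)$ with approximate root $x_0=1$ (this is where oddness of $p$ is essential) then shows that $1-p\,m_{31}^2$ is a square; non-vanishing of $a$ is automatic since $1-p\,m_{31}^2$ is a unit.

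Set $M':=\cR_z(-\zeta)M$. Because $\cR_z$ fixes $\boldsymbol{e}_3$, we have $M'\boldsymbol{e}_1=(a,0,m_{31})^\top$, and $M'\in SO(3)_p$ forces $a^2+p\,m_{31}^2=1=Q_+(\boldsymbol{e}_1)$. A second invocation of Proposition~\ref{prop:exrotiff}, this time to the subgroup of rotations around $\boldsymbol{e}_2$ acting on the $xz$-plane, produces $\eta\in\QQ_p\cup\{\infty\}$ with $\cR_y(-\eta)M'\boldsymbol{e}_1=\boldsymbol{e}_1$; no further squareness is needed here because $Q_+$ already takes the common value $1$ on both endpoints. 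Now $N:=\cR_y(-\eta)\cR_z(-\zeta)M\in SO(3)_p$ fixes $\boldsymbol{e}_1$, so by Theorem~\ref{thm:rotation} its rotation axis is $\QQ_p\boldsymbol{e}_1$ (or $N$ is the identity, which is $\cR_x(0)$ in the parametrisation of Eq.~\eqref{eq:rotazgeneric}); either way $N=\cR_x(\xi)$ for a suitable $\xi\in\QQ_p\cup\{\infty\}$, as required.

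The sole non-cosmetic obstacle is the Hensel step in the second paragraph. It is precisely this step that fails at $p=2$: for $m_{31}$ odd, $1-2\,m_{31}^2\equiv -1\pmod{8}$ is not a square in $\QQ_2$, foreshadowing the impossibility of a uniform Cardano or Euler decomposition at the even prime, as mentioned at the end of Section~\ref{sec:intro}. The two-fold ambiguity advertised after the theorem statement corresponds to the two choices of sign for the Hensel square root $a$.
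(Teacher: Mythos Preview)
Your proof is correct and follows essentially the same route as the paper's: both first use compactness (Theorem~\ref{thm:compactness}) and Hensel's lemma to produce a $z$-rotation killing the $\boldsymbol{e}_2$-component of $M\boldsymbol{e}_1$, then a $y$-rotation (guaranteed by Proposition~\ref{prop:exrotiff} since $Q_+$ is preserved) to send the result to $\boldsymbol{e}_1$, leaving an $x$-rotation. Your closing remarks on the failure at $p=2$ and on the two-fold ambiguity coming from the sign of the Hensel root are accurate and anticipate what the paper develops next.
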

\begin{proof}
Let $M\boldsymbol{e}_1$ be a vector with canonical coordinates $(m_1,m_2,m_3)$,
such that $Q_+(M\boldsymbol{e}_1)=Q_+(\boldsymbol{e}_1)=1$ since $M\in SO(3)_p$ preserves $Q_+$.

We need to find a composition of rotations around $\boldsymbol{e}_2$ and $\boldsymbol{e}_3$
transforming $M\boldsymbol{e}_1$ back to $\boldsymbol{e}_1$.

First, we show that there exists a $z$-rotation $\cR_z(\zeta)\in
SO(3)_p$ such that \beq\label{eq:rotprimaX}
\cR_z(\zeta)^{-1}M\boldsymbol{e}_1\in\text{span}(\boldsymbol{e}_1,\boldsymbol{e}_3).
\eeq This vector would be
$\cR_z(\zeta)^{-1}M\boldsymbol{e}_1=m_1'\boldsymbol{e}_1+m_3\boldsymbol{e}_3$
for some $m_1'\in\bQ_p$, where the third component of
$M\boldsymbol{e}_1$ is left unchanged by a rotation around
$\boldsymbol{e}_3$. As noted in Proposition \ref{prop:exrotiff},
necessary and sufficient condition for the existence of such a
$\cR_z(\zeta)^{-1}$ is \beq\notag
Q_+(m_1'\boldsymbol{e}_1+m_3\boldsymbol{e}_3)=Q_+(M\boldsymbol{e}_1)\Leftrightarrow
{m_1'}^2+pm_3^2=1. \eeq $Q_+(M\boldsymbol{e}_1)=m_1^2-vm_2^2+pm_3^2=1$
implies that $m_1,m_2,m_3\in\ZZ_p$ (cf. the compactness proof for
$SO(3)_p$ in Theorem \ref{thm:compactness}). Therefore, we resort to
Hensel's Lemma to show that $f(m_1')={m_1'}^2-1+pm_3^2$ admits roots
${m_1'}\in\bZ_p$. $f(m_1')\equiv{m_1'}^2-1\mod p$ has zeros
$m_1'\equiv\pm1\mod p$, in which the derivative
$f'(m_1')=2m_1'\not\equiv0\mod p$. Then, Hensel's Lemma allows us to
(uniquely) lift each of these solutions to a solution of the same
equation $\mod p^k$, converging to a $p$-adic solution $m_1'$. It
means that $\cR_z(\zeta)^{-1}$ as in Eq. \eqref{eq:rotprimaX}
exists.

Next, there exists a $y$-rotation $\cR_y(\eta)^{-1}\in SO(3)_p$ such that
\beq\label{eq:rote3e3}
\cR_y(\eta)^{-1}\cR_z(\zeta)^{-1}M\boldsymbol{e}_1=\boldsymbol{e}_1,
\eeq
because $Q_+\big(\cR_z(\zeta)^{-1}M\boldsymbol{e}_1\big)=Q_+(\boldsymbol{e}_1)$.

Eq. \eqref{eq:rote3e3} means that
$\cR_y(\eta)^{-1}\cR_z(\zeta)^{-1}M\in SO(3)_p$ has eigenvector
$\boldsymbol{e}_1$ with corresponding eigenvalue $1$, i.e.,
$\cR_y(\eta)^{-1}\cR_z(\zeta)^{-1}M:= \cR_x(\xi)$ is a rotation
around the $x$-axis.
\end{proof}

\begin{cor}
For every odd prime $p$, any $M\in SO(3)_p$ can be decomposed into
\beq\notag
\cR_z\cR_x\cR_y,\ \ \ \cR_x\cR_y\cR_z,\ \ \ \cR_y\cR_x\cR_z,
\eeq
respectively by certain parameters $\sigma,\,\tau,\omega\in\bQ_p\cup\{\infty\}$.
\end{cor}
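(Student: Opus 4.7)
The plan is to recycle the three-step scheme of Theorem~\ref{teor:cardanoSO3p} for each new ordering: pick the coordinate axis $\boldsymbol{e}_i$ that is fixed by the rightmost factor of the intended product, use an outer inverse rotation to push $M\boldsymbol{e}_i$ into the two-plane preserved by the middle factor, use the middle inverse rotation to bring the result back to $\boldsymbol{e}_i$, and finally invoke Theorem~\ref{thm:rotation} to recognise the residual operator, which fixes $\boldsymbol{e}_i$, as a rotation around the axis $\QQ_p\boldsymbol{e}_i$. Existence of the middle rotation at each step is a planar problem that reduces to equality of the relevant two-dimensional $Q_+$-value (as used in the theorem proof); the substance of the argument is the first step, where an appropriate Hensel lift has to be produced.

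For $M=\cR_z(\sigma)\cR_x(\tau)\cR_y(\omega)$ I would take $\boldsymbol{e}_i=\boldsymbol{e}_2$ (so $Q_+(M\boldsymbol{e}_2)=-v$) and seek $\cR_z(\sigma)^{-1}$ with $\cR_z(\sigma)^{-1}M\boldsymbol{e}_2\in\text{span}(\boldsymbol{e}_2,\boldsymbol{e}_3)$; writing $M\boldsymbol{e}_2=(m_1,m_2,m_3)\in\ZZ_p^3$, a $z$-rotation preserves the third coordinate, so this reduces to solving $m_2'^2=1+\tfrac{p}{v}m_3^2$ for $m_2'\in\ZZ_p$, an equation that modulo $p$ becomes $m_2'^2\equiv 1$, whose two simple roots lift by Hensel's lemma. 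For both $M=\cR_x(\sigma)\cR_y(\tau)\cR_z(\omega)$ and $M=\cR_y(\sigma)\cR_x(\tau)\cR_z(\omega)$ I would instead use $\boldsymbol{e}_i=\boldsymbol{e}_3$, so $Q_+(M\boldsymbol{e}_3)=p$: reducing $m_1^2-vm_2^2+pm_3^2=p$ modulo $p$ yields $m_1^2\equiv vm_2^2\pmod p$, and the non-residuosity of $v$ forces $m_1,m_2\in p\ZZ_p$. Writing $m_1=p\mu_1$, $m_2=p\mu_2$, the respective Hensel targets become $m_3'^2=1-p\mu_1^2$ (pushing $M\boldsymbol{e}_3$ into $\text{span}(\boldsymbol{e}_1,\boldsymbol{e}_3)$ by an outer $x$-rotation) and $m_3'^2=1+vp\mu_2^2$ (pushing $M\boldsymbol{e}_3$ into $\text{span}(\boldsymbol{e}_2,\boldsymbol{e}_3)$ by an outer $y$-rotation); both reduce modulo $p$ to $m_3'^2\equiv 1$ and lift by Hensel. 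The middle rotation and the terminating $z$-rotation are then produced exactly as in the theorem.

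The principal obstacle is putting the first equation into Hensel-ready form. In the $-v$ case this is immediate because $\tfrac{p}{v}m_3^2\in p\ZZ_p$ automatically; but in the $p$ case one first has to extract the divisibility $m_1,m_2\in p\ZZ_p$ from the non-residuosity of $v$, otherwise the candidate equations $m_3'^2=1-m_1^2/p$ and $m_3'^2=1+vm_2^2/p$ are not even statements about $p$-adic integers. Once this divisibility is in place, each equation reduces to $x^2\equiv 1\pmod p$ with nonsingular derivative, and the rest of the argument is a routine replay of the proof of Theorem~\ref{teor:cardanoSO3p}.
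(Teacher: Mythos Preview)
Your argument is correct. For the $\cR_z\cR_x\cR_y$ case it coincides with the paper's proof. For $\cR_x\cR_y\cR_z$ and $\cR_y\cR_x\cR_z$, however, the paper takes a shorter route: it simply observes that if every $M^{-1}\in SO(3)_p$ admits a $\cR_z\cR_y\cR_x$ decomposition (Theorem~\ref{teor:cardanoSO3p}), then inverting gives $M=\cR_x(-\xi)\cR_y(-\eta)\cR_z(-\zeta)$, and similarly $\cR_y\cR_x\cR_z$ follows by inverting the $\cR_z\cR_x\cR_y$ form just established. Your direct approach via $M\boldsymbol{e}_3$ works too, and the divisibility step $m_1,m_2\in p\ZZ_p$ (from $m_1^2\equiv vm_2^2\pmod p$ and $v$ a non-square) is exactly the right way to put the equations ${m_3'}^2=1-p\mu_1^2$ and ${m_3'}^2=1+vp\mu_2^2$ into Hensel-ready form. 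The inverse trick buys brevity; your method has the advantage of being uniform across all three orderings and of making explicit which $p$-adic divisibility is doing the work.
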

\begin{proof}
To prove the existence of a decomposition of the kind $\cR_z\cR_x\cR_y$,
it is enough to repeat the steps of the proof of Theorem \ref{teor:cardanoSO3p}.
In particular, given
$M\boldsymbol{e}_2=m_1\boldsymbol{e}_1+m_2\boldsymbol{e}_2+m_3\boldsymbol{e}_3$,
there exists $\cR_z(\sigma)\in SO(3)_p$ such that
\beq\notag
\cR_z(\sigma)^{-1}M\boldsymbol{e}_2\in\text{span}(\boldsymbol{e}_2,\boldsymbol{e}_3).
\eeq
In fact, there exists $m_2'\in\bZ_p$ such that
$\cR_z(\sigma)^{-1}M\boldsymbol{e}_2=m_2'\boldsymbol{e}_2+m_3\boldsymbol{e}_3$ and
\beq\notag
Q_+(m_2'\boldsymbol{e}_2+m_3\boldsymbol{e}_3)=-v{m_2'}^2+pm_3^2=-v=Q_+(\boldsymbol{e}_2).
\eeq
To show this, we apply Hensel's Lemma to $f(m_2')=v{m_2'}^2-pm_3^2-v$:
$f(m_2')\equiv 0\mod p\Leftrightarrow v{m_2'}^2\equiv v\Leftrightarrow m_2'\equiv\pm1\mod p$,
and $f'(m_2')=2m_2'\not\equiv0\mod p$ in these solutions, which then are
lifted to $p$-adic solutions.

The existence of the decomposition with respect to the $x$, $y$ and $z$ axes
follows straightforwardly from Theorem \ref{teor:cardanoSO3p}:
if any $M^{-1}\in SO(3)_p$ can be written as $M^{-1}=\cR_z(\zeta)\cR_y(\eta)\cR_x(\xi)$
for certain parameters $\xi,\eta,\zeta$, then any $M\in SO(3)_p$ can be decomposed
into $M=\cR_x(\xi)^{-1}\cR_y(\eta)^{-1}\cR_z(\zeta)^{-1}:=\cR_x(\xi')\cR_y(\eta')\cR_z(\zeta')$,
where $\xi'=-\xi$ if $\xi\in\bQ_p$ or $\xi=\xi'$ are infinite, and similarly for $\eta',\zeta'$.

One similarly proves the existence of the nautical decomposition $\cR_y\cR_x\cR_z$ for
$SO(3)_p$ from the one as $\cR_z\cR_x\cR_y$.
\end{proof}

\begin{theor}[$p$-adic Cardano decomposition, $p\equiv 1$ mod 4]
\label{teor:cardanoSO3p:p=1mod4}
For every odd prime $p\equiv 1\mod4$, any $M\in SO(3)_p$ can be decomposed into
\beq\notag
  M=\cR_x(\zeta)\cR_z(\eta)\cR_y(\xi),
\eeq
for some parameters $\xi,\eta,\zeta\in\QQ_p\cup\{\infty\}$.

By applying this to $M^{-1}$, we get a decomposition $M=\cR_y(\zeta)\cR_z(\eta)\cR_x(\xi)$.
\end{theor}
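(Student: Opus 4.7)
The plan is to parallel the proof of Theorem \ref{teor:cardanoSO3p}, but now the decomposition is cancelled off from the left, exploiting that $\cR_y(\xi)$ fixes $\boldsymbol{e}_2$. Writing $M\boldsymbol{e}_2=m_1\boldsymbol{e}_1+m_2\boldsymbol{e}_2+m_3\boldsymbol{e}_3$ in canonical coordinates, I would first produce $\cR_x(\zeta)\in SO(3)_p$ with $\cR_x(\zeta)^{-1}M\boldsymbol{e}_2\in\text{span}(\boldsymbol{e}_1,\boldsymbol{e}_2)$, and then $\cR_z(\eta)\in SO(3)_p$ with $\cR_z(\eta)^{-1}\cR_x(\zeta)^{-1}M\boldsymbol{e}_2=\boldsymbol{e}_2$; by Theorem \ref{thm:rotation} the element $N:=\cR_z(\eta)^{-1}\cR_x(\zeta)^{-1}M$ then has $\boldsymbol{e}_2$ on its rotation axis, so $N=\cR_y(\xi)$ for some $\xi\in\QQ_p\cup\{\infty\}$, giving the claimed decomposition.

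The crucial step is the construction of $\cR_x(\zeta)$. Since a rotation around $\boldsymbol{e}_1$ preserves the $\boldsymbol{e}_1$-component, one needs $\cR_x(\zeta)^{-1}M\boldsymbol{e}_2=m_1\boldsymbol{e}_1+m_2'\boldsymbol{e}_2$, and imposing preservation of $Q_+$ (equivalently invoking Proposition \ref{prop:exrotiff} with $\boldsymbol{n}=\boldsymbol{e}_1$ on the vectors $m_2\boldsymbol{e}_2+m_3\boldsymbol{e}_3$ and $m_2'\boldsymbol{e}_2$) reduces to the $p$-adic equation
\[
  {m_2'}^2 = m_2^2 + \frac{p}{u}\,m_3^2,
\]
using $v=-u$ for $p\equiv 1\mod 4$. (The source vector is nonzero, since otherwise $Q_+(M\boldsymbol{e}_2)=m_1^2$, which cannot equal the non-square $-v$.) The main obstacle is solvability of this equation, and it is precisely here that the hypothesis $p\equiv1\mod 4$ enters.

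I would apply Hensel's lemma to $f(X)=X^2-m_2^2-(p/u)m_3^2$. By Theorem \ref{thm:compactness} the coordinates $m_i$ lie in $\ZZ_p$, so $(p/u)m_3^2\in p\ZZ_p$ and $f(X)\equiv X^2-m_2^2\mod p$, with roots $\pm m_2$. From $Q_+(M\boldsymbol{e}_2)=-v=u$ one has $m_1^2+um_2^2+pm_3^2=u$, which modulo $p$ becomes $m_1^2+um_2^2\equiv u\mod p$; if $m_2\equiv 0\mod p$, this would force $m_1^2\equiv u\mod p$, contradicting the choice of $u$ as a non-square. Hence $m_2\in\ZZ_p^*$, the derivative $f'(\pm m_2)=\pm 2m_2$ is a unit ($p$ odd), and Hensel's lemma provides the required root $m_2'\in\ZZ_p^*$.

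Once $\cR_x(\zeta)$ is in hand, the existence of $\cR_z(\eta)$ carrying $m_1\boldsymbol{e}_1+m_2'\boldsymbol{e}_2$ back to $\boldsymbol{e}_2$ is immediate from Proposition \ref{prop:exrotiff} applied with $\boldsymbol{n}=\boldsymbol{e}_3$, as both vectors are orthogonal to $\boldsymbol{e}_3$ and share the $Q_+$-value $-v$ by construction. Theorem \ref{thm:rotation} then identifies the remaining factor $N$ as a rotation around $\boldsymbol{e}_2$, i.e.\ $N=\cR_y(\xi)$. Finally, the dual decomposition $M=\cR_y(\zeta)\cR_z(\eta)\cR_x(\xi)$ follows, exactly as in the corollary of Theorem \ref{teor:cardanoSO3p}, by applying what has just been proved to $M^{-1}$ and inverting, using $\cR_a(\sigma)^{-1}=\cR_a(-\sigma)$ for $\sigma\in\QQ_p$ and $\cR_a(\infty)^{-1}=\cR_a(\infty)$. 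Note that for $p\equiv 3\mod 4$ the Hensel argument above breaks: there $v=-1$ is a square mod $p$, the case $m_2\equiv 0$ is no longer excluded, and this is precisely why a different ordering of axes must be used.
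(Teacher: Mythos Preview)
Your proof is correct and follows essentially the same approach as the paper's: act on $M\boldsymbol{e}_2$, use Hensel's lemma with the key observation that $m_2\not\equiv 0\bmod p$ (since $-v=u$ is a non-square for $p\equiv 1\bmod 4$), then finish via Proposition~\ref{prop:exrotiff} and Theorem~\ref{thm:rotation}. One small slip in your closing remark: for $p\equiv 3\bmod 4$ it is $-v=1$ (a square) that breaks the exclusion of $m_2\equiv 0$, not ``$v=-1$ is a square'' (in fact $-1$ is a non-square there).
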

\begin{proof}
The strategy is similar to the previous Theorem \ref{teor:cardanoSO3p}. We look
at $M\boldsymbol{e}_2 = m_1\boldsymbol{e}_1 + m_2\boldsymbol{e}_2 + m_3\boldsymbol{e}_3$,
which has $m_1^2-vm_2^2+pm_3^2 = Q_+(M\boldsymbol{e}_2) = Q_+(\boldsymbol{e}_2) = -v$.
We first look for a $\zeta$ with
$\cR_x(\zeta)^{-1}M\boldsymbol{e}_2 = m_1\boldsymbol{e}_1 + m_2'\boldsymbol{e}_2$,
which is guaranteed if $-v{m_2'}^2 = -vm_2^2+pm_3^2$, because in this
way $Q_+(m_1\boldsymbol{e}_1 + m_2'\boldsymbol{e}_2)=Q_+(M\boldsymbol{e}_2)=Q_+(\boldsymbol{e}_2)$,
and we can invoke Proposition \ref{prop:exrotiff}.
We prove that we can find $m_2'$ with ${m_2'}^2 = m_2^2 - \frac{p}{v}m_3^2$. Indeed,
note first that $m_2\not\equiv 0\mod p$, since otherwise $-v = Q_+(M\boldsymbol{e}_2) \equiv m_1^2 \mod p$,
which is impossible since $-v$ is a non-square in $\ZZ_p$ and hence modulo $p$
(it is here that we use $p\equiv 1\mod 4$, since then $-1$ is a square in $\QQ_p$,
thus $v$ and $-v$ are both non-squares). Thus, we have solutions $m_2' \equiv \pm m_2 \mod p$,
which are both nonzero. Since the derivative $2m_2'$ is then nonzero, as well, we
can invoke Hensel's Lemma to obtain a solution of the equation in $\ZZ_p$.

Now we proceed as before: since now
$\cR_x(\zeta)^{-1}M\boldsymbol{e}_2 \perp \boldsymbol{e}_3$, we can
find $\eta$ such that
$\cR_z(\eta)^{-1}\cR_x(\zeta)^{-1}M\boldsymbol{e}_2 =
\boldsymbol{e}_2$ (once more invoking Proposition
\ref{prop:exrotiff}). But this means that
$\cR_z(\eta)^{-1}\cR_x(\zeta)^{-1}M$ is a rotation around the
$y$-axis, i.e., $\cR_z(\eta)^{-1}\cR_x(\zeta)^{-1}M = \cR_y(\xi)$
for some $\xi$, and we are done.
\end{proof}

\begin{rem}
\label{oss:strctdimonecessaria}
Suppose a general Cardano or Euler type decomposition
$$
SO(3)_p\ni M
=\cR_{\boldsymbol{n}_1}(\sigma)\cR_{\boldsymbol{n}_2}(\tau)\cR_{\boldsymbol{n}_3}(\omega),
$$
where $\boldsymbol{n}_1\perp \boldsymbol{n}_2$ and
$\boldsymbol{n}_2\perp \boldsymbol{n}_3$ (in the Cardano
decomposition, $\boldsymbol{n}_1\perp \boldsymbol{n}_3$, while in
the Euler one $\boldsymbol{n}_1=\boldsymbol{n}_3$). We have
\begin{equation}\notag\begin{split}
 M=\cR_{\boldsymbol{n}_1}(\sigma)\cR_{\boldsymbol{n}_2}(\tau)\cR_{\boldsymbol{n}_3}(\omega)
    &\Leftrightarrow
 \cR_{\boldsymbol{n}_2}(\tau)^{-1}\cR_{\boldsymbol{n}_1}(\sigma)^{-1}M=\cR_{\boldsymbol{n}_3}(\omega) \\
    &\Leftrightarrow
 \cR_{\boldsymbol{n}_2}(\tau)^{-1}\cR_{\boldsymbol{n}_1}(\sigma)^{-1}M\boldsymbol{n}_3=\boldsymbol{n}_3.
\end{split}\end{equation}
Now, $\cR_{\boldsymbol{n}_2}(\tau)^{-1}$ preserves the component of the vector
$\cR_{\boldsymbol{n}_1}(\sigma)^{-1} M\boldsymbol{n}_3$ along $\boldsymbol{n}_2$,
which must be $0$ to get $\boldsymbol{n}_3$ as a result, therefore
$\cR_{\boldsymbol{n}_1}(\sigma)^{-1} M\boldsymbol{n}_3\perp \boldsymbol{n}_2$.

This shows that $\cR_{\boldsymbol{n}_1}(\sigma)^{-1} M\boldsymbol{n}_3\perp \boldsymbol{n}_2$
is a necessary condition for the existence of a decomposition
$M=\cR_{\boldsymbol{n}_1}(\sigma)\cR_{\boldsymbol{n}_2}(\tau)\cR_{\boldsymbol{n}_3}(\omega)$.
Since orthogonal transformations preserve $Q_+$, we also have
$Q_+\big(\cR_{\boldsymbol{n}_1}(\sigma)^{-1} M\boldsymbol{n}_3\big)=Q_+(\boldsymbol{n}_3)$.

Conversely, these conditions are also sufficient. To be precise, assume that
there exists a vector $\boldsymbol{v} \perp \boldsymbol{n}_2$ with
$\boldsymbol{n}_1^\top\boldsymbol{v} = \boldsymbol{n}_1^\top M\boldsymbol{n}_3$
and $Q_+(\boldsymbol{v}) = Q_+(M\boldsymbol{n}_3) = Q_+(\boldsymbol{n}_3)$.
Then, by Proposition \ref{prop:exrotiff} there exists $\sigma$ such that
$\cR_{\boldsymbol{n}_1}(\sigma)^{-1} M\boldsymbol{n}_3 = \boldsymbol{v} \perp \boldsymbol{n}_2$.
Note that after the choice of $\sigma$, it follows that $\tau$ and $\omega$ are
determined uniquely.
\end{rem}

\begin{rem}
The Cardano decompositions for $SO(3)_p$ of the kind $\cR_x\cR_z\cR_y$ and $\cR_y\cR_z\cR_x$
do not exist in general for odd primes $p\equiv 3\mod 4$.
In fact, one can construct counterexamples of matrices $M\in SO(3)_p$ for which there does
not exist $\cR_{\boldsymbol{n}_1}(\sigma)\in SO(3)_p$ such that
$\cR_{\boldsymbol{n}_1}(\sigma)^{-1} M\boldsymbol{n}_3\perp \boldsymbol{n}_2$ according
to Remark \ref{oss:strctdimonecessaria}, by exploiting
$Q_+\big(\cR_{\boldsymbol{n}_1}(\sigma)^{-1} M\boldsymbol{n}_3\big) = Q_+(\boldsymbol{n}_3)$.

As an example, let us show the existence of special orthogonal transformations
that cannot be written in the Cardano product form $\cR_x\cR_z\cR_y$
(the form $\cR_y\cR_z\cR_x$ follows a closely similar reasoning).
According to the recipe, we consider
$M\boldsymbol{e}_2 = m_1\boldsymbol{e}_1 + m_2\boldsymbol{e}_2 + m_3\boldsymbol{e}_3$,
which has $m_1^2+m_2^2+pm_3^2 = Q_+(M\boldsymbol{e}_2) = Q_+(\boldsymbol{e}_2) = 1$.
To get a corresponding Cardano decomposition, we would need $\sigma$ with
$\cR_x(\sigma)^{-1}M\boldsymbol{e}_2 = m_1\boldsymbol{e}_1 + m_2'\boldsymbol{e}_2$
such that ${m_2'}^2 = m_2^2+pm_3^2$, but this is impossible if $m_2=0$, since
$p$ is not a square in $\QQ_p$. It remains to show that this case can occur,
namely there exists an $M\in SO(3)_p$ with
$M\boldsymbol{e}_2 = m_1\boldsymbol{e}_1 + m_3\boldsymbol{e}_3$.
By Proposition \ref{prop:transitive}, it is sufficient to find solutions of
$m_1^2+pm_3^2=1$, which indeed has two roots for $m_1$, as we can see by first
solving it modulo $p$, resulting in $m_1 \equiv \pm 1\mod p$, and then
using Hensel's Lemma.
\end{rem}

\begin{rem}
{For odd primes $p$, none of the six possible Euler decompositions
for $SO(3)_p$ exist in general. We indicate how to construct counterexamples,
following  the same strategy as in the previous remark, based on
Remark \ref{oss:strctdimonecessaria}.}

(i \&{} ii) Nonexistence of XYX and YXY:
Let us focus on the decomposition form $\cR_x\cR_y\cR_x$
(the form $\cR_y\cR_x\cR_y$ follows a closely similar reasoning).
According to the recipe, we consider
$M\boldsymbol{e}_1 = m_1\boldsymbol{e}_1 + m_2\boldsymbol{e}_2 + m_3\boldsymbol{e}_3$,
which has $m_1^2-vm_2^2+pm_3^2 = Q_+(M\boldsymbol{e}_1) = Q_+(\boldsymbol{e}_1) = 1$.
To get a corresponding Euler decomposition, we would need $\xi$ with
$\cR_x(\xi)^{-1}M\boldsymbol{e}_1 = m_1\boldsymbol{e}_1 + m_3'\boldsymbol{e}_3$
such that $p{m_3'}^2 = -vm_2^2+pm_3^2$, but this is impossible if $m_2\not\equiv 0\mod p$.
It remains to show that this case can occur, namely that there exists
an $M\in SO(3)_p$ with $m_2\not\equiv 0\mod p$.
By Hensel's Lemma, this boils down to finding solutions to
$m_1^2-vm_2^2\equiv 1\mod p$ in integers modulo $p$, such that $m_2\not\equiv 0\mod p$,
which always exist.
(Indeed, for $p\equiv 3\mod 4$, when $-v=1$, we can choose $m_2=1$.)


{
(iii) Nonexistence of XZX: According to the recipe, we consider
$M\boldsymbol{e}_1 = m_1\boldsymbol{e}_1 + m_2\boldsymbol{e}_2 + m_3\boldsymbol{e}_3$,
which has $m_1^2-vm_2^2+pm_3^2 = Q_+(M\boldsymbol{e}_1) = Q_+(\boldsymbol{e}_1) = 1$.
To get a corresponding Euler decomposition, we would need $\xi$ with
$\cR_x(\xi)^{-1}M\boldsymbol{e}_1 = m_1\boldsymbol{e}_1 + m_2'\boldsymbol{e}_2$,
such that $-v{m_2'}^2 = -vm_2^2+pm_3^2$, but this is impossible if $m_2=0$.
This case can occur: there exists an $M\in SO(3)_p$ with $m_2=0$, because $m_1^2+pm_3^2=1$
always admits solutions according to Hensel's Lemma.}

{
(iv) Nonexistence of YZY:
Similarly,
consider $M\boldsymbol{e}_2 = m_1\boldsymbol{e}_1 + m_2\boldsymbol{e}_2 + m_3\boldsymbol{e}_3$,
which has $m_1^2-vm_2^2+pm_3^2 = Q_+(M\boldsymbol{e}_2) = Q_+(\boldsymbol{e}_2) = -v$.
To get a corresponding Euler decomposition, we would need $\eta$ with
$\cR_y(\eta)^{-1}M\boldsymbol{e}_2 = m_1'\boldsymbol{e}_1 + m_2\boldsymbol{e}_2$,
such that ${m_1'}^2 = m_1^2+pm_3^2$, but this is impossible if $m_1=0$.
However, there exists an $M\in SO(3)_p$ with $m_1=0$, because $-vm_2^2+pm_3^2=-v$
always admits solutions, as before using Hensel's Lemma.}

{
(v \&{} vi) Nonexistence of ZXZ and ZYZ: Let us focus on
$\cR_z\cR_y\cR_z$ (the form $\cR_z\cR_x\cR_z$ follows a closely
similar reasoning), and consider $M\boldsymbol{e}_3 =
m_1\boldsymbol{e}_1 + m_2\boldsymbol{e}_2 + m_3\boldsymbol{e}_3$,
which has $m_1^2-vm_2^2+pm_3^2 = Q_+(M\boldsymbol{e}_3) =
Q_+(\boldsymbol{e}_3) = p$. To get a corresponding Euler
decomposition, we would need $\zeta$ with
$\cR_z(\zeta)^{-1}M\boldsymbol{e}_3 = m_1'\boldsymbol{e}_1 +
m_3\boldsymbol{e}_3$, such that ${m_1'}^2 = m_1^2-vm_2^2$. This ends
in a contradiction if the right hand side happens to be a non-square
in $\QQ_p$. Indeed, we will show that $m_1^2-vm_2^2=vp^2$ can occur.
To see this, consider first the condition $m_1^2-vm_2^2+pm_3^2 = p$,
from which we get $m_1^2-vm_2^2 \equiv 0 \mod p$, and since $v$ is a
non-square modulo $p$, this means that $m_1 \equiv m_2 \equiv 0 \mod
p$, i.e., we can write $m_1=px$, $m_2=py$ with $x,y\in\ZZ_p$, and
our condition after cancellation of $p$ factors becomes
$px^2-vpy^2+m_3^2=1$. First solving modulo $p$ and then using
Hensel's Lemma shows that this always has solutions for $m_3$,
regardless of the integers $x$ and $y$: $m_3 =
\pm\sqrt{1-px^2+vpy^2} \in \QQ_p$. This means we can choose $m_1$
and $m_2$ freely as multiples of $p$, and always satisfy the
quadratic form constraint. On the other hand, we claim that there
are solutions of $m_1^2-vm_2^2=vp^2$, which upon substituting $x$
and $y$ becomes $x^2-vy^2=v$. Indeed, the quadratic form on the left
hand side does not represent zero nontrivially, but $x^2-vy^2-vz^2$
does. So, we can take any solution of $x^2-vy^2-vz^2=0$, which
necessarily must have $z\neq 0$. By homogeneity, we can assume
$z=1$, which gives the desired solution.}
%
\end{rem}

\begin{rem}
When $p=2$, none of the Cardano and Euler decompositions exist for all of $SO(3)_2$.
The counterexample is a single matrix
\begin{equation}\notag
  M = \begin{pmatrix}
        -2                   & -2                   & \sqrt{-7}\\
        \frac12(\sqrt{-7}+1) & \frac12(\sqrt{-7}-1) & 2\\
        \frac12(\sqrt{-7}-1) & \frac12(\sqrt{-7}+1) & 2
      \end{pmatrix}.
\end{equation}
Note that $-7$ is a square in $\QQ_2$, and indeed $\sqrt{-7}=1+2^2+2^4+2^5+2^7+\ldots$,
so that $\frac12(\sqrt{-7}+1) = 1+2+2^3+2^4+2^6+\ldots$ is odd,
and $\frac12(\sqrt{-7}-1) = 2+2^3+2^4+2^6+\ldots$ is even in $\ZZ_2$.

As a matter of fact, $M$ does not have a decomposition of any of the six Cardano
and six Euler forms shown in Theorem \ref{teor:EulercompRot}.
Let us show that it cannot be written as a product
$\cR_x\cR_y\cR_z$. Namely, by Remark \ref{oss:strctdimonecessaria}
it is enough to show that there does not exist a $\xi$ such that
$\cR_x(\xi)^{-1} M\boldsymbol{e}_3 \perp \boldsymbol{e}_2$. With
$M\boldsymbol{e}_3 = m_1\boldsymbol{e}_1 + m_2\boldsymbol{e}_2 +
m_3\boldsymbol{e}_3$ such that $m_1^2+m_2^2+m_3^2=1$, we would need
a vector $m_1\boldsymbol{e}_1+m_3'\boldsymbol{e}_3$ with quadratic
form evaluating to $1$, i.e., ${m_3'}^2 = m_2^2+m_3^2 = 8$, which
however is not a square in $\QQ_2$; contradiction.

Likewise, it cannot be decomposed in the form $\cR_y\cR_x\cR_z$,
because then we would need $\cR_y(\eta)^{-1} M\boldsymbol{e}_3 \perp
\boldsymbol{e}_1$, which amounts to a vector
$m_2\boldsymbol{e}_2+m_3'\boldsymbol{e}_3$ with quadratic form
evaluating to $1$, i.e., ${m_3'}^2 = m_1^2+m_3^2 = -3$, which
however is not a square in $\QQ_2$, as it is $\equiv 5 \mod 8$;
contradiction.

Any other decomposition results in a contradiction of the same type, because,
as can be checked, every column of $M$ has the property that, while
the squares of its elements sum to $1$, the sum of any two squares is not a
square in $\QQ_2$. To prove this, it is enough to consider all occurring
cases and check that the sum in question is not a square modulo $8$ or $16$.
\end{rem}

Although these results are in contrast to the Euclidean case, $SO(3)_p$ is still
generated by its subgroups $G_x,\,G_y,\,G_z$ of rotations around the reference axes,
at least for all odd primes $p$.
Now we will find the multiplicity of the above Cardano representations of $SO(3)_p$
for odd $p$.

\begin{prop}
If $M\in SO(3)_p$ and $M=\cR_x(\xi)\cR_y(\eta)\cR_z(\zeta)$, then
there exists at least another distinct Cardano representation of $M$
along the same axes \beq\notag
  M=\cR_x(\infty)\cR_x(\xi)\cR_y\left(\frac{1}{\alpha\eta}\right)\cR_z(\infty)\cR_z(\zeta).
\eeq
\end{prop}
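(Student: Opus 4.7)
The plan is to reduce the claimed five-factor identity to a single identity involving only the $y$-axis subgroup, exploiting the abelian structure of each reference-axis rotation group. First I would introduce the shorthand $D_x:=\cR_x(\infty)$, $D_y:=\cR_y(\infty)$, $D_z:=\cR_z(\infty)$. From the explicit canonical-basis forms in Remark~\ref{remark:alpaherassiref} one reads off $D_x=\mathrm{diag}(1,-1,-1)$, $D_y=\mathrm{diag}(-1,1,-1)$ and $D_z=\mathrm{diag}(-1,-1,1)$. In particular these three elements pairwise commute, each squares to the identity, and satisfy $D_xD_z=D_y$.

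Since the rotation subgroup about any reference axis is abelian (Proposition~\ref{prop:subAXISgroup2}), $D_x$ commutes with $\cR_x(\xi)$ and $D_z$ commutes with $\cR_z(\zeta)$. After applying these commutations and canceling the common factors $\cR_x(\xi)$ on the left and $\cR_z(\zeta)$ on the right of the asserted equality, the claim reduces to the single identity
\[
D_x\,\cR_y\!\left(\tfrac{1}{\alpha\eta}\right)\,D_z \;=\; \cR_y(\eta),
\]
where $\alpha=\alpha_y$ is the parameter associated with rotations about the $y$-axis.

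To prove this I would combine two ingredients. (i) The conjugation rule $D_z\,\cR_y(\tau)\,D_z=\cR_y(-\tau)$ for every $\tau\in\QQ_p\cup\{\infty\}$: this holds because $D_z$ restricts on the $xz$-plane (the rotation plane of $\cR_y$) to a reflection, and conjugating a planar rotation by a reflection of its plane gives its inverse, which in this parametrisation equals $\cR_y(-\tau)$ by Proposition~\ref{prop:subAXISgroup2}. (ii) Eq.~\eqref{eq:switchsign3} with $\boldsymbol{n}=\boldsymbol{e}_2$ and $\sigma=\eta$ yields $\cR_y(-1/(\alpha\eta))=D_y\,\cR_y(\eta)$. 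Together with $D_z^2=I$ and $D_xD_z=D_y$, these produce
\[
D_x\,\cR_y\!\left(\tfrac{1}{\alpha\eta}\right)\,D_z
 \;=\; D_xD_z\,\cR_y\!\left(-\tfrac{1}{\alpha\eta}\right)
 \;=\; D_y\cdot D_y\,\cR_y(\eta)
 \;=\; \cR_y(\eta),
\]
establishing the reduced identity and hence the whole proposition.

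The only (very mild) obstacle is the conjugation rule in (i); this can be verified either by a one-line matrix check from the explicit form of $\cR_y(\tau)$, or by the abstract reflection-inverts-rotation argument sketched above. For the \emph{distinctness} asserted in the statement, collapsing the new five-factor product via the composition law~\eqref{eq:compSamxi2} gives equivalent Cardano parameters $\bigl(-1/(\alpha_x\xi),\,1/(\alpha_y\eta),\,-1/(\alpha_z\zeta)\bigr)$, which differ from $(\xi,\eta,\zeta)$ because $\alpha_x\xi^2=-1$ has no solution by Corollary~\ref{cor:forueedfapaval}.
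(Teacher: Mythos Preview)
Your argument is correct and uses essentially the same ingredients as the paper's proof: the identity $\cR_x(\infty)\cR_y(\infty)\cR_z(\infty)=I$ (equivalently $D_xD_z=D_y$), the conjugation rule that flips the sign of the $y$-parameter, and Eq.~\eqref{eq:switchsign3}. Your organisation---first cancelling the outer factors and then verifying the single $y$-axis identity---is a slightly cleaner packaging of the same computation, and your explicit distinctness check via Corollary~\ref{cor:forueedfapaval} is a welcome addition that the paper leaves implicit.
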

\begin{proof}
A systematic ambiguity of order $2$ in the products $\cR_x(\xi)\cR_y(\eta)\cR_z(\zeta)$
is based on the relation
\beq\notag
  \cR_x(\infty)\cR_y(\infty)\cR_z(\infty)=I.
\eeq
By using it with Eq. \eqref{eq:switchsign3} we get
\beq\notag\begin{aligned}
M&=\cR_x(\xi)\cR_y(\eta)\cR_z(\zeta)\\
&=\cR_x(\xi)\cR_y(\eta)\cR_x(\infty)\cR_y(\infty)\cR_z(\infty)\cR_z(\zeta)\\
&= \cR_x(\infty)\cR_x(\xi)\big(\cR_y(\eta)+\cR_x(\infty)\left[\cR_y(\eta),\cR_x(\infty)\right]\big)\cR_y(\infty)\, \cR_z(\infty)\cR_z(\zeta)\\
&=\cR_x(\infty)\cR_x(\xi)\begin{pmatrix}
-e(\eta)&0&f(\eta)\\
0&1&0\\
g(\eta)&0&-e(\eta)
\end{pmatrix}\cR_z(\infty)\cR_z(\zeta),
\end{aligned}\eeq
where the product of $\cR_x(\infty)$ with the commutator of matrices $\left[\cR_y(\eta),\cR_x(\infty)\right]$ provides the transformation $\eta\mapsto-\eta$ (an infinite parameter remains the same) on the parameter of the $y$-rotation (sign change of the off-diagonal elements), which together with $\cR_y(\infty)$ globally gives $\eta\mapsto\frac{1}{\alpha\eta}$ (sign change of the diagonal entries).
\end{proof}

We give two elementary results in order to prove that the Cardano representation
of $SO(3)_p$ along the axes $x$, $y$ and $z$ is exactly twofold for odd $p$.
\begin{prop}
\label{prop:coppieuguali}
For odd $p$, and all $\cR_x(\xi)$, $\cR_x(\xi')$, $\cR_y(\eta)$, $\cR_y(\eta')\in SO(3)_p$,
\beq\notag
\cR_x(\xi)\cR_y(\eta)= \cR_x(\xi')\cR_y(\eta')\Leftrightarrow \cR_x(\xi)= \cR_x(\xi'),\, \cR_y(\eta)=\cR_y(\eta').
\eeq
\end{prop}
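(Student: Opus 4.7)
The reverse implication is immediate, so the real content is the forward direction. My plan is to rearrange the equality $\cR_x(\xi)\cR_y(\eta) = \cR_x(\xi')\cR_y(\eta')$ as
\[
\cR_x(\xi')^{-1}\cR_x(\xi) = \cR_y(\eta')\cR_y(\eta)^{-1},
\]
and then to observe that, by the subgroup and abelian structure of rotations around a fixed axis (Proposition \ref{prop:subAXISgroup2}, applied to the planar restrictions), the left-hand side is still a rotation around the $x$-axis and the right-hand side a rotation around the $y$-axis. Call this common matrix $L \in SO(3)_p$.

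The crux of the argument is the claim that the only element of $SO(3)_p$ that is simultaneously an $x$-rotation and a $y$-rotation is the identity. Here I would exploit that an $x$-rotation fixes $\boldsymbol{e}_1$ pointwise and a $y$-rotation fixes $\boldsymbol{e}_2$ pointwise, so $L$ must have $\boldsymbol{e}_1$ and $\boldsymbol{e}_2$ as its first two columns, leaving only the third column $(a,b,c)^\top$ free. Plugging this ansatz into the defining condition $L^\top A L = A$ with $A = \text{diag}(1,-v,p)$ and reading off the entries, the off-diagonal relations pin down $a=0$ and $b=0$, and the $(3,3)$-entry forces $p c^2 = p$, so $c = \pm 1$; the constraint $\det L = 1$ then selects $c = 1$, giving $L = I$.

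From $L = I$ it follows immediately that $\cR_x(\xi) = \cR_x(\xi')$ and $\cR_y(\eta) = \cR_y(\eta')$, which is the desired forward implication. I do not expect any genuine obstacle: the abelian subgroup step is already in place, and the remaining work is the routine $3\times 3$ matrix calculation identifying $L$ with the identity. If anything, the only point requiring care is that I am using the common $A$ for odd $p$; the proposition's hypothesis excludes $p=2$, so the non-uniform normalisations there do not arise.
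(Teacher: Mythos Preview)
Your argument is correct. The rearrangement is valid, the closure of each axis-rotation subgroup is clear from the block form of $\cR_{\boldsymbol{n}}(\sigma)$ (abelianness is not even needed, only the subgroup property), and your computation that the only $L\in SO(3)_p$ fixing both $\boldsymbol{e}_1$ and $\boldsymbol{e}_2$ is the identity goes through exactly as you sketch.

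The paper takes a different, more computational route: it writes out the product
\[
\cR_x(\xi)\cR_y(\eta)=\begin{pmatrix}e(\eta)&0&f(\eta)\\b(\xi)g(\eta)&a(\xi)&b(\xi)e(\eta)\\a(\xi)g(\eta)&c(\xi)&a(\xi)e(\eta)\end{pmatrix}
\]
explicitly and equates it entrywise with the primed version, using that the diagonal entries $a(\xi)$ and $e(\eta)$ never vanish (since $-p/v$ and $1/p$ are non-squares for odd $p$) to cancel and read off $a,b,c,e,f,g$ one by one. Your approach trades that entry-chasing for a cleaner group-theoretic reduction to the triviality of $G_x\cap G_y$, which in fact works over any field and for any two distinct reference axes. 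The paper's explicit product, on the other hand, feeds directly into the next theorem (the twofold Cardano multiplicity), where the same matrix is reused; so its computational overhead is amortised there.
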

\begin{proof}
This is immediate to prove by equating two matrices of the kind
\beq\notag
\cR_x(\xi)\cR_y(\eta)=\begin{pmatrix}
1&0&0\\0&a(\xi)&b(\xi)\\0&c(\xi)&a(\xi)
\end{pmatrix}\begin{pmatrix}
e(\eta)&0&f(\eta)\\
0&1&0\\
g(\eta)&0&e(\eta)
\end{pmatrix}=\begin{pmatrix}e(\eta)&0&f(\eta)\\b(\xi)g(\eta)&a(\xi)&b(\xi)e(\eta)\\a(\xi)g(\eta)&c(\xi)&a(\xi)e(\eta)\end{pmatrix}
\eeq
thanks to the fact that $e(\eta),\,a(\xi)\neq0$ for every parameter.
\end{proof}

\begin{cor}
\label{cor:purtserve}
Let $\cR_x(\xi)\cR_y(\eta)\cR_{\boldsymbol{n}}(\sigma)= \cR_x(\xi')\cR_y(\eta')\cR_{\boldsymbol{n}}(\sigma')$, $p>2$. If  $\big(\cR_x(\xi),\,\cR_y(\eta),\,\cR_{\boldsymbol{n}}(\sigma)\big)\neq\big(\cR_x(\xi'),\,\cR_y(\eta'),\,\cR_{\boldsymbol{n}}(\sigma')\big)$ then $\cR_{\boldsymbol{n}}(\sigma)\neq\cR_{\boldsymbol{n}}(\sigma')$.
\end{cor}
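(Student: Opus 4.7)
The plan is to prove the contrapositive: assuming that $\cR_{\boldsymbol{n}}(\sigma)=\cR_{\boldsymbol{n}}(\sigma')$, I will show that the two triples must coincide, contradicting the hypothesis of distinctness. This reduces the corollary to a direct application of Proposition \ref{prop:coppieuguali}.

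First, I would take the identity $\cR_x(\xi)\cR_y(\eta)\cR_{\boldsymbol{n}}(\sigma)= \cR_x(\xi')\cR_y(\eta')\cR_{\boldsymbol{n}}(\sigma')$ and, under the assumption $\cR_{\boldsymbol{n}}(\sigma)=\cR_{\boldsymbol{n}}(\sigma')$, right-multiply both sides by the common inverse $\cR_{\boldsymbol{n}}(\sigma)^{-1}\in SO(3)_p$. This cancellation (which is legitimate because $SO(3)_p$ is a group) yields the simpler equation
\[
  \cR_x(\xi)\cR_y(\eta) = \cR_x(\xi')\cR_y(\eta').
\]
At this point, Proposition \ref{prop:coppieuguali} applies directly and gives $\cR_x(\xi)=\cR_x(\xi')$ together with $\cR_y(\eta)=\cR_y(\eta')$.

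Combining these two equalities with the initial assumption $\cR_{\boldsymbol{n}}(\sigma)=\cR_{\boldsymbol{n}}(\sigma')$, the triples $\big(\cR_x(\xi),\cR_y(\eta),\cR_{\boldsymbol{n}}(\sigma)\big)$ and $\big(\cR_x(\xi'),\cR_y(\eta'),\cR_{\boldsymbol{n}}(\sigma')\big)$ coincide, contradicting the standing hypothesis that they are distinct. Hence the assumption $\cR_{\boldsymbol{n}}(\sigma)=\cR_{\boldsymbol{n}}(\sigma')$ cannot hold, which is exactly the desired conclusion $\cR_{\boldsymbol{n}}(\sigma)\neq\cR_{\boldsymbol{n}}(\sigma')$.

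Since the argument is purely formal once Proposition \ref{prop:coppieuguali} is invoked, there is essentially no obstacle to overcome: the whole content is packaged in the preceding proposition, whose proof uses that the diagonal entries $a(\xi)$ and $e(\eta)$ of the $x$- and $y$-rotation matrices are nonzero for odd $p$. The corollary is therefore a one-line consequence of right-cancellation in the group $SO(3)_p$ together with that proposition.
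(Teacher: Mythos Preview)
Your proof is correct and follows essentially the same approach as the paper: prove the contrapositive by right-cancelling the common factor $\cR_{\boldsymbol{n}}(\sigma)=\cR_{\boldsymbol{n}}(\sigma')$ and then invoke Proposition~\ref{prop:coppieuguali} to conclude that all three factors coincide.
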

\begin{proof}
We prove the contrapositive: if $\cR_{\boldsymbol{n}}(\sigma)= \cR_{\boldsymbol{n}}(\sigma')$, then
\beq\notag 
\cR_x(\xi)\cR_y(\eta)\cR_{\boldsymbol{n}}(\sigma)=
\cR_x(\xi')\cR_y(\eta')\cR_{\boldsymbol{n}}(\sigma')\Rightarrow
\cR_x(\xi)\cR_y(\eta)=\cR_x(\xi')\cR_y(\eta').
\eeq
By Proposition \ref{prop:coppieuguali}, this is equivalent to $\big(\cR_x(\xi),\,\cR_y(\eta),\,\cR_{\boldsymbol{n}}(\sigma)\big)=\big(\cR_x(\xi'),\,\cR_y(\eta'),\,\cR_{\boldsymbol{n}}(\sigma')\big)$.
\end{proof}

\begin{theor}
Every $M\in SO(3)_p$, for odd prime $p$, has exactly two distinct
Cardano decompositions with respect to the $x$, $y$ and $z$ axes
\beq\notag
  M=\cR_x(\xi)\cR_y(\eta)\cR_z(\zeta)
\eeq
for some $\xi,\eta,\zeta\in\bQ_p\cup\{\infty\}$, and
\beq\notag
M=\cR_x(\infty)\cR_x(\xi)\cR_y\left(\frac{1}{\alpha\eta}\right)\cR_z(\infty)\cR_z(\zeta).
\eeq
\end{theor}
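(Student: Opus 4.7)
Existence of the two decompositions stated has already been established, by Theorem \ref{teor:cardanoSO3p} and by the preceding proposition respectively; it remains only to show that no third decomposition exists. My plan is to read off the parameters $\eta$, $\xi$, $\zeta$ successively from the image $M\boldsymbol{e}_3$, finding $\eta$ forced up to a 2-fold ambiguity and then $\xi,\zeta$ uniquely determined once $\eta$ is chosen.

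Since $\cR_z(\zeta)$ fixes $\boldsymbol{e}_3$, any decomposition $M=\cR_x(\xi)\cR_y(\eta)\cR_z(\zeta)$ satisfies $M\boldsymbol{e}_3 = \cR_x(\xi)\cR_y(\eta)\boldsymbol{e}_3$. Using the matrix form of $\cR_y$ from Remark \ref{remark:alpaherassiref}, $\cR_y(\eta)\boldsymbol{e}_3 = -\frac{2p\eta}{1+p\eta^2}\boldsymbol{e}_1 + \frac{1-p\eta^2}{1+p\eta^2}\boldsymbol{e}_3$, and since $\cR_x(\xi)$ fixes $\boldsymbol{e}_1$, matching first coordinates yields $k = -\frac{2p\eta}{1+p\eta^2}$ with $k := (M\boldsymbol{e}_3)_1 \in \QQ_p$. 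The rational function $\eta \mapsto -\frac{2p\eta}{1+p\eta^2}$ is invariant under $\eta \mapsto 1/(p\eta)$ (direct verification), so for each value of $k$ the fibre contains the pair $\{\eta_1, 1/(p\eta_1)\}$. These two points are always distinct: equality would force $\eta_1^2 = 1/p$, which has no solution in $\QQ_p$ because $1/p$ has odd valuation. Moreover, both lie in $\QQ_p \cup \{\infty\}$ (with $\infty$ handled projectively as in Remark \ref{oss:inftyinclusion}), since at least one $\eta$ exists by the already known existence of some decomposition. For $k\neq 0$ the equation $kp\eta^2 + 2p\eta + k = 0$ is quadratic and these are its only roots; for $k=0$ the two solutions are $\{0,\infty\}$.

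Fix either admissible $\eta$. The remaining requirement $\cR_x(\xi)[\cR_y(\eta)\boldsymbol{e}_3] = M\boldsymbol{e}_3$ reduces, because the $\boldsymbol{e}_1$-components already agree, to a 2-dimensional problem in the $yz$-plane, where $\cR_x$ acts as an element of the associated $SO(2)_p^\kappa$ with $\alpha = -p/v$. Both $(y,z)$-parts have the same value under the restricted form $-vy^2+pz^2$, since the $\boldsymbol{e}_1$-components and the total $Q_+$-values match. Hence $\xi$ exists by Proposition \ref{prop:exrotiff}, and is unique because $SO(2)_p^\kappa$ acts freely on nonzero vectors: a fixed point equation reduces (from Eq.~\eqref{eq:rotazgeneric2}) to $\alpha\sigma^2 = 1$, i.e., $\sigma^2 = -v/p$, but $-v/p$ has odd $p$-adic valuation and is therefore a non-square in $\QQ_p$. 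Finally $\zeta$ is forced uniquely: $\cR_y(\eta)^{-1}\cR_x(\xi)^{-1}M$ fixes $\boldsymbol{e}_3$ and lies in $SO(3)_p$, hence is a $z$-rotation (Theorem \ref{thm:rotation}), with parameter unique by the injective parameterization of Theorem \ref{thm:SO2}. The number of decompositions is therefore at most two, which together with the two already exhibited gives exactly two. The main technical steps are the non-square property of $1/p$ (distinctness of the two $\eta$) and of $-v/p$ (uniqueness of $\xi$), both obtained from parity of the $p$-adic valuation. As a sanity check, the parameter shift $\eta \mapsto 1/(p\eta) = 1/(\alpha_y\eta)$ matches exactly the one appearing in the statement's second decomposition.
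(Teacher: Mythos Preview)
Your argument is correct and takes a more geometric route than the paper's. The paper writes out the full product $\cR_x(\xi)\cR_y(\eta)\cR_z(\zeta)$ as an explicit $3\times3$ matrix and equates entries with a primed version: from $m_{13}=m_{13}'$ it obtains $f(\eta)=f(\eta')$, hence $\eta'=\eta$ or $\eta'=1/(p\eta)$, and then chases the remaining eight entries to force $\xi,\zeta$. You instead read off the same invariant $f(\eta)=(M\boldsymbol{e}_3)_1$ directly from the action on $\boldsymbol{e}_3$, and replace the entry-by-entry bookkeeping with the free action of the planar rotation group on nonzero vectors. This is essentially a precise version of the paper's own closing Remark after the theorem, which sketches the same two-step count qualitatively; what you add is the explicit quadratic for $\eta$ and the free-action step. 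The payoff of your approach is brevity and that it generalises verbatim to the other Cardano orderings; the paper's matrix computation is more self-contained in that it never appeals to transitivity/freeness of the $SO(2)_p^\kappa$-action.

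Two minor corrections. First, existence of the $\cR_x\cR_y\cR_z$ form is the Corollary following Theorem~\ref{teor:cardanoSO3p}, not that theorem itself (which gives $\cR_z\cR_y\cR_x$). Second, your ``fixed point equation $\alpha\sigma^2=1$'' is not what the computation actually gives: from Eq.~\eqref{eq:rotazgeneric2} one finds $\det\bigl(\cR_\kappa(\sigma)-I\bigr)=\dfrac{4\alpha\sigma^2}{1+\alpha\sigma^2}$, which vanishes only for $\sigma=0$, so no non-square condition is needed at that step. Neither point affects the validity of the proof.
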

\begin{proof}
We look for nontrivial solutions of
$\cR_x(\xi)\cR_y(\eta)\cR_z(\zeta)=\cR_x(\xi')\cR_y(\eta')\cR_z(\zeta')$.
Using the parameterization \eqref{eq:rotazgeneric}, we have
\begin{equation}\notag\begin{split}
\cR_x(\xi)\cR_y(\eta)\cR_z(\zeta) &=
\begin{pmatrix}
  1&0&0\\
  0&a(\xi)&b(\xi)\\
  0&c(\xi)&a(\xi)
\end{pmatrix}
\begin{pmatrix}
  e(\eta)&0&f(\eta)\\
  0&1&0\\
  g(\eta)&0&e(\eta)
\end{pmatrix}
\begin{pmatrix}
  l(\zeta)&m(\zeta)&0\\
  n(\zeta)&l(\zeta)&0\\
  0&0&1
\end{pmatrix} \\
&=\begin{pmatrix}
    e(\eta)l(\zeta)&e(\eta)m(\zeta)&f(\eta)\\
    a(\xi)n(\zeta)+b(\xi)g(\eta)l(\zeta)&a(\xi)l(\zeta)+b(\xi)g(\eta)m(\zeta)&b(\xi)e(\eta)\\
    c(\xi)n(\zeta)+a(\xi)g(\eta)l(\zeta)&c(\xi)l(\zeta)+a(\xi)g(\eta)m(\zeta)&a(\xi)e(\eta)
  \end{pmatrix}.
\end{split}\end{equation}
We equate two matrices of this kind, whose entries we call $m_{ij}$ and
$m_{ij}'$ respectively ($i,j=1,2,3$). Thus,
\beq\notag
  m_{13}=m_{13}'\Leftrightarrow f(\eta)=f(\eta'),
\eeq
equivalent to $-\frac{2p\eta}{1+p\eta^2}=-\frac{2p\eta'}{1+p{\eta'}^2}\Leftrightarrow 2p(\eta-\eta')(1-p\eta\eta')=0\Leftrightarrow \eta=\eta'$ or $\eta'=1/(p\eta)$.
Furthermore, $f(\eta)=f(\eta')\Leftrightarrow g(\eta)=g(\eta')$, since
$g(\eta)=\frac{2\eta}{1+p\eta^2}$.

If $\eta=\eta'\in\bQ_p\cup\{\infty\}$, then $\cR_y(\eta)=\cR_y(\eta')$,
otherwise $\eta'=1/(p\eta)\in\bQ_p\cup\{\infty\}$ and
\beq\notag
\cR_y(\eta) =
\begin{pmatrix}
  -e(\eta')&0&f(\eta')\\0&1&0\\ g(\eta')&0&-e(\eta')
\end{pmatrix}.
\eeq
As a consequence, from $m_{11}=m_{11}'$, $m_{12}=m_{12}'$, $m_{23}=m_{23}'$, $m_{33}=m_{33}'$
we deduce $l(\zeta)=\pm l(\zeta')$, $m(\zeta)=\pm m(\zeta')$, $a(\xi)=\pm a(\xi')$,
$b(\xi)= \pm b(\xi')$, where the $+$ sign is always related to $\eta=\eta'$ and the $-$ sign
to $\eta\eta'=1/p$. This is because $e(\eta)\neq0$ for every $\eta\in\bQ_p\cup\{\infty\}$,
as $e(\eta)=\frac{1-p\eta^2}{1+p\eta^2}=0\Leftrightarrow \eta^2=1/p$, but
$1/p$ is not a square; $e(\infty)=-1$.

Then, $m_{22}=m_{22}'$ is always satisfied, whereas
\beq\notag\begin{aligned}
  m_{21}=m_{21}' &\Leftrightarrow a(\xi)n(\zeta)+b(\xi)g(\eta)l(\zeta) = a(\xi')n(\zeta'
                                   +b(\xi')g(\eta')l(\zeta')\\
                 &\Leftrightarrow a(\xi)n(\zeta)=\pm a(\xi)n(\zeta')\
                  \Leftrightarrow n(\zeta)=\pm n(\zeta'); \\
  m_{31}=m_{32} &\Leftrightarrow c(\xi)n(\zeta)+a(\xi)g(\eta)l(\zeta)=c(\xi')n(\zeta')
                                  +a(\xi')g(\eta')l(\zeta')\\
                &\Leftrightarrow c(\xi)n(\zeta)=\pm c(\xi')n(\zeta)
                 \Leftrightarrow c(\xi)=\pm c(\xi').
\end{aligned}
\eeq
Note that $a(\xi)\neq0$ for every $\xi\in\bQ_p\cup\{\infty\}$, because
$a(\xi)=\frac{1+\frac{p}{v}\xi^2}{1-\frac{p}{v}\xi^2}=0\Leftrightarrow \xi^2=-v/p$
but $-v/p$ is not a square; $a(\infty)=-1$.

In the last step it might be
$n(\zeta)=\frac{2\zeta}{1-v\zeta^2}=0\Leftrightarrow \zeta=0$, for which the
condition $m(\zeta)=\pm m(\zeta')=0$ gives $\cR_z(\zeta)=\cR_z(\zeta')=I$.
This implies $\cR_x(\xi)=\cR_x(\xi'),\,\cR_y(\eta)=\cR_y(\eta')$ by
Corollary \ref{cor:purtserve}, in particular $c(\xi)=c(\xi')$, in agreement
with what we deduced above.
Then, $m_{32}=m_{32}'$ is satisfied.

We have found that $\cR_x(\xi)\cR_y(\eta)\cR_z(\zeta)=\cR_x(\xi')\cR_y(\eta')\cR_z(\zeta')$
if and only if either
\beq\notag
\bigl(\cR_x(\xi),\cR_y(\eta),\cR_z(\zeta)\bigr) = \bigl(\cR_x(\xi'),\cR_y(\eta'),\cR_z(\zeta')\bigr)
\eeq
or
\beq\notag
\left\{\begin{aligned}
&\cR_x(\xi)=\cR_x(\infty)\cR_x(\xi'), \\ &\cR_y(\eta)=\begin{pmatrix}
-e(\eta')&0&f(\eta')\\
0&1&0\\
g(\eta')&0&-e(\eta')
\end{pmatrix}, \\
&\cR_z(\zeta)=\cR_z(\infty)\cR_z(\zeta'),
\end{aligned}\right.
\eeq
which concludes the proof.
\end{proof}

\begin{rem}
The above results of the duplicity of the Cardano decomposition can be
understood qualitatively, and for any $p$, from Remark \ref{oss:strctdimonecessaria}.

Indeed, assume that $SO(3)_p \ni M
=\cR_{\boldsymbol{n}_1}(\sigma)\cR_{\boldsymbol{n}_2}(\tau)\cR_{\boldsymbol{n}_3}(\omega)$,
where $\boldsymbol{n}_1 \perp \boldsymbol{n}_2 \perp \boldsymbol{n}_3$.
We have seen that this is possible if and only if
$\cR_{\boldsymbol{n}_1}(\sigma)^{-1} M\boldsymbol{n}_3\perp \boldsymbol{n}_2$,
and chosen $\cR_{\boldsymbol{n}_1}(\sigma)$, the other two rotations are
uniquely determined. There are at most two solutions, since there is
only one free parameter in conditions for
$\boldsymbol{v} = \cR_{\boldsymbol{n}_1}(\sigma)^{-1} M\boldsymbol{n}_3$,
and they boil down to a single quadratic equation.
On the other hand, with $\cR_{\boldsymbol{n}_1}(\sigma)$,
another solution of this condition is always
$\cR_{\boldsymbol{n}_1}(\infty)\cR_{\boldsymbol{n}_1}(\sigma)$.
\end{rem}

\section{Discussion}
\label{sec:discussion}
We have seen that not only is there a well-motivated $p$-adic special orthogonal
group $SO(3)_p$, but that it shares many geometric features with its real
counterpart $SO(3)_{\RR}$, although there are also crucial
differences, owing to the specific number theory of $\ZZ_p$ for different
primes $p$, in particular distinguishing odd $p$ from $p=2$, and within
odd primes between $p\equiv 1\mod4$ and $p\equiv 3\mod4$.
These differences manifested themselves most blatantly in the discussion
of Cardano and Euler angle decompositions of general special orthogonal
transformations in terms of rotations around the reference axes:
they are not available in general for $p=2$, but certain (all) Cardano
decompositions hold for odd primes $p\equiv 3\mod 4$ ($p\equiv 1\mod 4$).
We leave open the possibility of modified principal angle decompositions
beyond the use of the reference axes.

Note that we have treated the groups essentially as algebraic groups, and
in future development, it may pay off to follow the analytic approach
of $p$-adic Lie groups and Lie algebras \cite{Serre:Lie}, which allows for a
local description in terms of infinitesimal generators.

An open question that we have left, might be answerable in this way, which
is that after the cyclicity of the two-dimensional rotation groups $SO(2)_p^\kappa$.
Since the groups are definitely profinite, the correct question seems to
be whether the groups $SO(2)_p^\kappa$ are procyclic. To prove this,
we consider their projections modulo $p^k$,
$\pi_k\bigl(SO(2)_p^\kappa\bigr) \subset SL(2,\ZZ/p^k\ZZ)$, and
should show that for every $p$ and $\kappa$, and sufficiently large $k$
these are cyclic, cf.~\cite{Ilaria:tesi}.

Finally, we want to discuss our motivation for considering $p$-adic
spatial rotations in the first place: it grows out of Volovich's
$p$-adic quantum theory, in which Euclidean space is replaced by
$p$-adic space to define the underlying phase space \cite{Volovich},
see also \cite{V+V,Varadarajan,Vuvuzela}. The idea is to realise
quantum systems as unitary representations of the symmetry group of
the $p$-adic space, according to Noether's theorem \cite{Weyl}, and
this has been realised for the displacement operations in position
and momentum, resulting in a $p$-adic Heisenberg--Weyl algebra of
position and momentum operators. What has not been done yet is to
develop a quantum theory of $p$-adic angular momentum. By the same
philosophy, this is identical to the classification of all the
projective unitary irreducible representations of $SO(3)_p$. We get
infinitely many such representations from reducing the group modulo
$p^k$, noting that $SO(3)_p \mod p^k$ are finite groups, for which
the irreps can be found by standard tools \cite{FultonHarris}.
Indeed, in \cite{Michele:tesi,Ilaria:tesi}, this has been done for
reduction modulo $p$ and $p^2$, for certain odd primes. An open
question is whether all irreps of $SO(3)_p$ arise in this way. We
suspect that this problem cannot be solved using standard $p$-adic
techniques based on Hensel's Lemma, but may require non-standard
$p$-adic analysis and Gretel's Lemma \cite{Grimm}.

\section*{Acknowledgments}
This project originated with the research visits of IS and MP with the
Grup d'Informaci\'o Qu\`antica at UAB, Barcelona, in the context of their
degree and MSc projects.
MP and IS were co-funded by the Erasmus+ programme of the European Union.
AW acknowledges support by the EU (STREP ``RAQUEL''), the ERC (AdG ``IRQUAT''),
the Spanish MINECO (grants FIS2013-40627-P and FIS2016-86681-P)
with the support of FEDER funds, as well as by the Generalitat de
Catalunya CIRIT, projects 2014-SGR-966 and 2017-SGR-112.


\end{document}